\documentclass[11 pt]{amsart}
\usepackage{amsfonts}
\usepackage{amsmath,amscd}
\usepackage{fullpage}
\usepackage{amssymb}
\usepackage{centernot} 
\usepackage{enumerate} 
\usepackage{parskip}
\usepackage{pb-diagram} 
\usepackage{mathrsfs}
\usepackage[OT2,T1]{fontenc}
\usepackage{seqsplit}
\usepackage{tikz}
\usepackage{tikz-cd}

\usepackage{color}
\usepackage{array}
\usepackage{verbatim}
\usepackage{url}

%\usepackage[document]{ragged2e}

%macros come from D&D

\def\IZS{\text{\rm I$_0^*$}}

\def\II{\text{\rm II}}

\def\IV{\text{\rm IV}}

\DeclareSymbolFont{cyrletters}{OT2}{wncyr}{m}{n}
\DeclareMathSymbol{\Sha}{\mathalpha}{cyrletters}{"58}

%\definecolor{refkey}{rgb}{0.9451,0.2706,0.4941}
%\definecolor{labelkey}{rgb}{0.9451,0.2706,0.4941}
%\definecolor{ruri}{rgb}{0.0078,0.3922,0.6510}

\definecolor{refkey}{rgb}{1,1,1}
\definecolor{labelkey}{rgb}{1,1,1}
\definecolor{cite}{rgb}{0.9451,0.2706,0.4941}
\definecolor{ruri}{rgb}{0.0078,0.4022,0.8010}

\usepackage[%
bookmarks=true,bookmarksnumbered=true,%
colorlinks=true,linkcolor=ruri,citecolor=red%
 ]{hyperref}

\makeindex \setcounter{tocdepth}{1}

\def\F{{\rm \mathbb{F}}}
\def\Z{{\rm \mathbb{Z}}}
\def\N{{\rm \mathbb{N}}}
\def\Q{{\rm \mathbb{Q}}}

\def\G{{\rm \mathbb{G}}}
\def\Qbar{{\rm \overline{\mathbb{Q}}}}
\def\C{{\rm \mathbb{C}}}

\def\R{{\rm \mathbb{R}}}

\def\P{{\rm \mathbb{P}}}

\def\A{{\rm \mathbb{A}}}

\def\AJ{{\rm AJ}}
\def\NS{{\rm NS}}

\def\Aut{{\rm Aut}}

\def\NS{{\rm NS}}
\def\Pic{{\rm Pic}}

\def\Res{{\rm Res}}
\def\Sym{{\rm Sym}}

\def\GL{{\rm GL}}
\def\Gr{{\rm Gr}}
\def\CH{{\rm CH}}
\def\Corr{{\rm Corr}}
\def\Gal{{\rm Gal}}

\def\rk{{\rm rk}}

\DeclareMathOperator*{\ord}{ord}

\def\Hom{{\rm Hom}}
\def\End{{\rm End}}

\def\Spec{{\rm Spec}}

%Command to write down curly letters for sheaves

\numberwithin{equation}{section}

\newtheorem{theorem}{Theorem}[section]
\newtheorem{lemma}[theorem]{Lemma}

\newtheorem{remark}[theorem]{Remark}

\newtheorem{example}[theorem]{Example}
\newtheorem{conjecture}[theorem]{Conjecture}
\newtheorem{corollary}[theorem]{Corollary}
\newtheorem{proposition}[theorem]{Proposition}
%\newenvironment{definition}[1][Definition]{\begin{trivlist}
%\item[\hskip \labelsep {\bfseries #1}]}{\end{trivlist}}
%\newenvironment{assumption}[1][Assumption]{\begin{trivlist}
%\item[\hskip \labelsep {\bfseries #1}]}{\end{trivlist}}

%\newenvironment{example}[1][Example]{\begin{trivlist}
%\item[\hskip \labelsep {\bfseries #1}]}{\end{trivlist}}
%\newenvironment{remark}[1][Remark]{\begin{trivlist}
%\item[\hskip \labelsep {\bfseries #1}]}{\end{trivlist}}

%\newenvironment{proof}{\noindent {\bf Proof: }}{$\Box$ \vspace{2 ex}}

%Notes macros
%\usepackage{marginnote,color}%,changepage}
%\usepackage[usenames,dvipsnames]{xcolor}
%\def\hidenotes{\def\inline##1##2##3{} \def\inlinewide##1##2##3{} \def\marg##1##2##3{}}

\usepackage{tikz}

%Jef: I commented out the commands below, I didn't see the point of them

%\def\shownotes{\def\inline##1##2##3{ \begin{adjustwidth}{3mm}{7mm}\mbox{}\par \noindent
%{\color{##1}\hspace{-1.9cm}{\large ##2}\vspace{-\baselineskip}\\##3}
%\newline\end{adjustwidth}} \def\inlinewide##1##2##3{ \begin{adjustwidth}{0mm}{0cm}\mbox{}\par \noindent
%{\color{##1}\hspace{-1.6cm}{\large ##2}\vspace{-\baselineskip}\\##3}
%\newline\end{adjustwidth}}  \def\marg##1##2##3{\marginnote{\color{##1}{\large ##2}\\{\small ##3}}[-.8cm]}}

%\shownotes

%Commands for this document

\newcommand{\define}[1]{{\fontfamily{cmss}\selectfont{#1}}}

\DeclareMathOperator{\sep}{sep}
\newcommand{\calO}{\mathcal{O}}
\DeclareMathOperator{\HH}{H}

\DeclareMathOperator{\sym}{sym}

%MSC tags:
%11G10: Abelian varieties of dimension >1
%14G05  Rational points
%14H40  Jacobians, Prym varieties
%14H45  Special algebraic curves and curves of low genus
%14K15  Arithmetic ground fields for abelian varieties

\begin{document}
\setlength{\parskip}{2pt} % 1ex plus 0.5ex minus 0.2ex}
\setlength{\parindent}{8pt}
\title{Ceresa cycles of bielliptic Picard curves} 
\author{Jef Laga}
\address{Department of Pure Mathematics and Mathematical Statistics, Wilberforce Road, Cambridge, CB3 0WB, UK}
\email{jeflaga@hotmail.com}
\author{Ari Shnidman}
\address{Einstein Institute of Mathematics, Hebrew University of Jerusalem, Israel} 
\email{ari.shnidman@gmail.com}

\maketitle

\begin{abstract}
We prove that the Chow class $\kappa_{\infty}(C_t)$ of the Ceresa cycle of the genus three curve $C_t \colon y^3 = x^4 + 2tx^2 + 1$ is torsion if and only if  $Q_t=( \sqrt[3]{t^2 -1},t)$ is a torsion point on the elliptic curve $y^2 = x^3 + 1$. In particular, there are infinitely many plane quartic curves over $\C$ with torsion Ceresa cycle.  Over $\overline{\Q}$, we show that the Beilinson--Bloch height of $\kappa_{\infty}(C_t)$ is proportional to the N\'eron--Tate height of $Q_t$.  Thus, the height of $\kappa_{\infty}(C_t)$ is nondegenerate and satisfies a Northcott property.  To prove all this, we show that the Chow motive that controls $\kappa_{\infty}(C_t)$ is isomorphic to $\mathfrak{h}^1$ of an appropriate  elliptic curve.
\end{abstract}
%\vspace{.5cm}

\tableofcontents
\makeatletter
%\@starttoc{toc}
\makeatother

%I commented out the to do list but we should still bear in mind the items below
%To do list: 
%\begin{itemize}
%    \item Figure out whether $\kappa_e(C)$ is torsion only if $(2g-2)e-K_C$ is torsion.
%    \item Check whether references to bielliptic Picard paper are valid, and whether we need to change anything to that paper.
%    \item General proofreading
%\end{itemize}

\section{Introduction}

Let $k$ be an algebraically closed field and $C$ a smooth, projective, and connected curve over $k$ of genus $g \geq 2$ with Jacobian variety $J$.
Let $e$ be a degree-$1$ divisor on $C$ and let $\iota_e \colon C \hookrightarrow J$ be the Abel-Jacobi map based at $e$. 
We study the torsion behaviour of the Ceresa cycle 
\begin{align}\label{equation: ceresa cycle}
    \kappa_{e}(C) := [\iota_e(C)]- (-1)^*[\iota_e(C)] \in \CH_1(J)
\end{align}
in the Chow group modulo rational equivalence.
If $\kappa_{e}(C)$ is torsion, then $(2g-2)e= K_C$ in $\CH_0(C)\otimes \Q$, where $K_C$ is the canonical divisor class.
Moreover if $(2g-2)e = K_C$ in $\CH_0(C)\otimes \Q$, then the image of $\kappa_{e}(C)$ in $\CH_1(J)\otimes{\Q}$ is independent of $e$ and we denote it by $\kappa(C)$; see \S\ref{subsec: ceresa and GS cycles def} for these claims.
The class $\kappa(C)$ vanishes if and only if $\kappa_e(C)$ is torsion for some degree-$1$ divisor $e$.

While $\kappa_e(C)$ is always homologically trivial, it need not be trivial in $\CH_1(J)$. 
Indeed, if $k=\C$ then Ceresa famously showed \cite{Ceresa} that for a very general curve of genus $g\geq 3$, $\kappa_e(C)$ is of infinite order for every choice of $e$, even modulo algebraic equivalence.
On the other hand, if $C$ is hyperelliptic and $e$ is a Weierstrass point, then $\kappa_e(C) = 0$.
Beyond this, little is known about the set of genus-$g$ curves with a vanishing Ceresa cycle, though see \S\ref{subsec: previous results} for some recent results and examples.  

In this paper, we study the Ceresa cycles of bielliptic Picard curves over fields of characteristic $\neq 2,3$ \cite{LagaShnidman}. As $k$ is algebraically closed, such a curve $C$ has an affine model of the form
\begin{align}\label{equation: bielliptic Picard curve}
    y^3 = x^4+2tx^2+1,
\end{align}
for some $t \in k\setminus\{\pm 1\}$, unique up to replacing $t$ with $-t$. 
The unique point $\infty \in C(k)$ at infinity has the property that $(2g-2)\infty = 4\infty$ is canonical.
Our main result shows that the Ceresa cycles of these curves are governed by points on the single elliptic curve $\widehat{E} \colon y^2 = x^3 + 1$.

\begin{theorem}\label{thm:main} 
Let $C_t$ be the smooth projective curve with model \eqref{equation: bielliptic Picard curve}, and let $Q\in \widehat{E}(k)$ be a point with $y$-coordinate equal to $t$. 
Then $\kappa_{\infty}(C_t) \in \CH_1(J_t)$ is torsion if and only if $Q \in  \widehat{E}(k)$ is torsion.
\end{theorem}

In other words, $\kappa_{\infty}(C_t)$ is torsion if and only if $(\sqrt[3]{t^2-1},t)\in \widehat{E}(k)$ is torsion, for any choice of cube root.
See Theorem \ref{theorem: ceresa torsion statement in terms of Edelta} for a slightly different formulation of this theorem which also relates the torsion orders.
Theorem \ref{thm:main} (which is proved in \S\ref{subsec: the main theorem}) appears to be the first characterization of torsion Ceresa cycles in a positive-dimensional family of curves, aside from families where $\kappa(C)$ is identically zero. 
We deduce from this result the following corollary (see \S\ref{subsec: proofs}).

\begin{corollary}\label{cor: main infinitude}
    There are infinitely many $\Qbar$-isomorphism classes of genus $3$ curves $C$ with $\kappa(C)\neq 0$ that can be defined over $\Q$.
    In fact, $\kappa_{\infty}(C_t)$ has infinite order for all $t \in \Q\setminus \{0,\pm 1,\pm 3\}$.
\end{corollary}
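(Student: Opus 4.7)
The plan is to apply Theorem \ref{thm:main} and classify when $Q_t = (\sqrt[3]{t^2-1}, t) \in \widehat{E}(\Qbar)$ is torsion for rational $t$. The CM automorphism $[\zeta_3]\colon(x,y) \mapsto (\zeta_3 x, y)$ of $\widehat{E}$ fixes the $y$-coordinate, so the Galois orbit of $Q_t$ is contained in $\{Q_t, [\zeta_3]Q_t, [\zeta_3^2]Q_t\}$. In particular, $Q_t$ is defined over the field $F_t := \Q(\sqrt[3]{t^2-1})$, which is either $\Q$ or a non-Galois cubic extension of $\Q$.

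The key step is the lemma $\widehat{E}(F_t)_{\mathrm{tors}} = \widehat{E}(\Q)_{\mathrm{tors}} \cong \Z/6\Z$. Over $\Q$ the torsion is generated by $(2,3)$ and has $y$-coordinates in $\{0,\pm 1,\pm 3\}$, a direct Nagell--Lutz computation. When $F_t$ is cubic, neither $\zeta_3$ nor $i$ lies in $F_t$ since $[F_t:\Q]=3$ is coprime to $2$; combined with the explicit coordinates $\widehat{E}[2] = \{O\} \cup \{(-\zeta_3^i, 0)\}_{i=0,1,2}$ and $\widehat{E}[3]$ (whose non-rational points have $y$-coordinates $\pm\sqrt{-3}$, coming from the $3$-division polynomial $3x(x^3+4)=0$), this rules out new $2$- and $3$-torsion. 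For primes $\ell \geq 5$, the CM Galois representation on $\widehat{E}[\ell]$ factors through the normalizer of the Cartan subgroup $(\Z[\zeta_3]/\ell)^\times \subset \GL_2(\F_\ell)$, so every nonzero $\ell$-torsion point has Galois orbit of size at least $\ell-1 \geq 4$, ruling it out of any cubic field. For $4$- and $9$-torsion one computes directly: the $4$-torsion points above $(-1,0)$ have $x$-coordinates satisfying $x^2+2x-2 = 0$, hence lying in $\Q(\sqrt{3})$ and not in any cubic field; a similar analysis via the $3$-isogeny $\widehat{E} \to \widehat{E}/\langle(0,1)\rangle$ handles $9$-torsion. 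A cleaner route would be to cite a known classification of torsion for CM elliptic curves over cubic number fields.

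The lemma implies that $Q_t$ torsion $\Rightarrow Q_t \in \widehat{E}(\Q)_{\mathrm{tors}} \Rightarrow t \in \{0,\pm 1,\pm 3\}$; excluding the singular values $t = \pm 1$, the Ceresa cycle $\kappa(C_t)$ has infinite order for every $t \in \Q \setminus \{0,\pm 1,\pm 3\}$. Since $C_t \cong C_{t'}$ over $\Qbar$ precisely when $t' = \pm t$ (as noted in the paragraph before Theorem \ref{thm:main}), these infinitely many rational values of $t$ yield infinitely many $\Qbar$-isomorphism classes of genus-$3$ curves over $\Q$ with infinite-order Ceresa cycle. The main obstacle is the torsion lemma, especially the uniform bound on $\ell$-torsion via the CM Galois representation; I would expect the authors to appeal to a standard CM torsion bound rather than reprove it from scratch.
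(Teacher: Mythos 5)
Your overall strategy is viable and would prove the corollary, but it is a genuinely different (and harder) route than the paper's, and as written its key lemma is not fully established. The paper does not argue through Theorem \ref{thm:main} and torsion over the cubic fields $\Q(\sqrt[3]{t^2-1})$ at all: it proves the refined statement (Theorem \ref{theorem: ceresa torsion statement in terms of Edelta}) that over \emph{any} field, $\kappa(C_{a,b})$ is torsion if and only if the point $\mathcal{Q}_{a,b}=((a^2-4b),\,a(a^2-4b))$ is torsion on the twist $E^{\Delta}\colon y^2=x^3+4b(a^2-4b)^2$; this point is rational over the field of definition of $(a,b)$, so for $a,b\in\Q$ the whole question reduces to classifying \emph{rational} torsion on $y^2=x^3+d$ (Lemma \ref{lem:j-invariant 0 classification}), which is elementary, and Corollary \ref{cor: main infinitude} then follows via Proposition \ref{prop: Q torsion ceresa cycles}. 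Your route, by contrast, must control $\widehat{E}(F)_{\mathrm{tors}}$ over the pure cubic fields $F=\Q(\sqrt[3]{t^2-1})$, which is a strictly harder arithmetic input that the paper deliberately avoids by working with the untwisted marked point.

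Within your route, two steps need repair. First, the argument for primes $\ell\geq 5$ is a non sequitur as stated: knowing only that the mod-$\ell$ image lies in the normalizer of the Cartan $(\Z[\zeta_3]/\ell)^{\times}$ gives no lower bound on Galois orbits; you need a lower bound on the image (via the Hecke character/CM theory, where the index in the Cartan can be as large as $|\mu_6|=6$), and for split primes $\ell\equiv 1\pmod 3$ the points on the two eigenlines have full-Cartan orbit of size only $\ell-1$, so after dividing by a possible index $6$ the bound $\geq 4$ is not automatic (e.g.\ $\ell=7$ requires a separate argument, say by reducing modulo several primes or by citing the classification of CM torsion in odd degree). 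Second, the $9$-torsion case is only waved at; the correct mechanism is that any point of order $9$ over a cubic field must lie over $(0,\pm1)$, and its $x$-coordinate cubed is a root of the irreducible cubic $u^3-96u^2+48u+64$, whose splitting field is the cyclic cubic field of conductor $9$ (its discriminant $2^{12}3^{10}$ is a square), which is not contained in any pure cubic $F_t$ — but this computation (or a citation to the classification of CM torsion over cubic fields together with the field information) has to be carried out, not merely asserted. With those gaps filled your proof works; but note how much cheaper the paper's mechanism is, precisely because $E^{\Delta}$ was chosen so that the relevant point is already $\Q$-rational.
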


The first line of Corollary \ref{cor: main infinitude} is not surprising given Ceresa's result, but it does not follow from it. Robin de Jong has given a different proof of this infinitude, using a  different family of curves  \cite{deJong}. 
% It appears to be open in higher genus, as far as we know. \ari{de Jong's method should probably generalize to curves of arbitrarily large genus so I vote we delete this last sentence}

Since $\widehat{E}$ has infinitely many torsion points, we also deduce from Theorem \ref{thm:main} that there are infinitely many plane quartic curves over $\C$ with {\it torsion} Ceresa cycle. As we explain below, our proof shows that there is an algebraic correspondence from $J_t$ to $\widehat{E}$ that sends $\kappa(C_t)$ to a multiple of $Q$. This allows us to deduce the following stronger statement:
\begin{theorem}\label{thm: arbitrarily large order}
 %There exist infinitely many  plane quartic curves $C$ over $\overline{\Q}$ with $\kappa_e(C)$ torsion, where $e$ is such that $4e \equiv K_C$. Moreover, the torsion 
 %orders of $\kappa_e(C)$ are unbounded.    
There exist infinitely many $t\in \C$ such that $\kappa_{\infty}(C_t)$ is torsion.
Each such $t$ lies in $\Qbar$.
Moreover, the finite torsion orders of the classes $\kappa_{\infty}(C_t)$ are unbounded.
\end{theorem}
Note that it was only recently discovered that there exists {\it any} plane quartic curve with torsion Ceresa cycle: Qiu--Zhang \cite[\S4.1]{QiuZhang} showed that $C_0\colon y^3 = x^4+1$ has this property.

% Interestingly, all known examples in Theorem \ref{thm: arbitrarily large order} can be defined over $\Qbar$; for the curves $C_t$, this follows immediately from Theorem \ref{thm:main}. 
For $t \in \Qbar\setminus \{\pm 1\}$, we write $h(\kappa(C_t))$ for the Beilinson--Bloch height $\langle\kappa(C_t), \kappa(C_t)\rangle$ \cite{Beilinson, Kunnemann-heights} of $\kappa(C_t)$. 
We show that $h(\kappa(C_t))$ is proportional to the N\'eron--Tate height of $Q$ (Theorem \ref{thm: BB height = Neron-Tate height}), which implies the following nondegeneracy and Northcott properties.
%Let $\deg(C_t)$ be the minimum degree of a number field over which $C_t$ can be defined. \ari{We could also not make this definition and just write $[\Q(t) \colon \Q] < d$ below. I think this would be cleaner (and they are of course equivalent). What do you think?}

\begin{theorem}\label{thm: northcott intro}
We have $h(\kappa(C_t)) = 0 $ if and only if $\kappa_{\infty}(C_t)$ is torsion. Moreover, for any $X \in \R$ and $d \in \N$, the set 
 \[\left\{ t \in \Qbar \setminus \{ \pm 1\} \colon h(\kappa(C_t)) \leq X \hspace{3mm} \mbox{and} \hspace{3mm} [\Q(t):\Q] \leq d\right\}\] is finite.   
\end{theorem}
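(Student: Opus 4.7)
The plan is to combine the proportionality statement of Theorem \ref{thm: BB height = Neron-Tate height} with Theorem \ref{thm:main} and the standard nondegeneracy and Northcott properties of the N\'eron--Tate height on the elliptic curve $\widehat{E}$. Concretely, I would first extract from Theorem \ref{thm: BB height = Neron-Tate height} a positive constant $c$ (independent of $t$) such that
\[
h(\kappa(C_t)) \;=\; c \cdot \widehat{h}(Q_t)
\]
for all $t \in \Qbar \setminus \{\pm 1\}$, where $\widehat{h}$ is the N\'eron--Tate height on $\widehat{E}$. Both assertions of the theorem will then follow by transferring well-known properties of $\widehat{h}$ across this identity.

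For the nondegeneracy, I would invoke the classical fact that $\widehat{h}(P) = 0$ for $P \in \widehat{E}(\Qbar)$ if and only if $P$ is a torsion point. Combined with Theorem \ref{thm:main}, this gives $h(\kappa(C_t)) = 0$ iff $Q_t$ is torsion iff $\kappa(C_t)$ is torsion in $\CH_1(J_t)$.

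For the Northcott property, suppose $t \in \Qbar \setminus \{\pm 1\}$ satisfies $[\Q(t):\Q] \leq d$ and $h(\kappa(C_t)) \leq X$. Since $Q_t$ lies in $\widehat{E}(\Q(t, \sqrt[3]{t^2-1}))$, we have $[\Q(Q_t):\Q] \leq 3d$, and the proportionality gives $\widehat{h}(Q_t) \leq X/c$. The classical Northcott theorem for N\'eron--Tate heights on elliptic curves then shows that only finitely many such $Q_t \in \widehat{E}(\Qbar)$ exist. Finally, since $t$ appears as the $y$-coordinate of $Q_t$ in the model $y^2 = x^3 + 1$, the assignment $t \mapsto Q_t$ is injective, so the set in question is finite.

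The substantive input is thus entirely in Theorem \ref{thm: BB height = Neron-Tate height}; the deduction of Theorem \ref{thm: northcott intro} from it is a short formal argument, and I do not anticipate any essential obstacle beyond bookkeeping the factor of $3$ in the degree bound.
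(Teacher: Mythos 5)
Your proposal is correct and follows essentially the same route as the paper, which deduces the theorem directly from Theorem \ref{thm: BB height = Neron-Tate height} together with the nondegeneracy and Northcott properties of the N\'eron--Tate height; your version just makes explicit the bookkeeping (the $\overline{\Q}$-isomorphism $E^{\Delta}\simeq \widehat{E}$ carrying $\mathcal{Q}_{(2t,1)}$ to $Q_t$ and preserving the canonical height, the nonvanishing of the proportionality constant, the degree bound $3d$, and the injectivity of $t\mapsto Q_t$ via the $y$-coordinate). No gaps.
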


Since the field of definition of $C_t$ is $\Q(t^2)$ (Lemma \ref{lemma: j-invariant bielliptic picard curve field of definition}), this implies that for every fixed $d\in \mathbb{N}$, there are finitely many $\Qbar$-isomorphism classes of curves $C_t$ with torsion Ceresa cycle that can be defined over a number field of degree $\leq d$.
In particular, there are finitely many curves $C_t$ with $\kappa_{\infty}(C_t)$ torsion that can be defined over $\Q$. In fact, there are  three such curves up to $\Qbar$-isomorphism: $C_0$, $C_{\pm 3}$ and $C_{\pm\sqrt{-3}}$ (Proposition \ref{prop: Q torsion ceresa cycles}). Note that $C_{\sqrt{-3}}$ has $\Q$-model $y^3 = x^4 + 6x^2-3$.

Finally, we show that for all $t$, the image of $\kappa_{\infty}(C_t)$ in the Griffiths group of homologically trivial cycles modulo algebraic equivalence is torsion, in the following strong sense:

\begin{theorem}\label{thm: torsion in griffiths intro}
      There exists an integer $N\geq 1$ such that for every bielliptic Picard curve $C$ over a field of characteristic not $2$ nor $3$, the cycle $N\cdot \kappa_{\infty}(C)\in \CH_1(J)$ is algebraically trivial.
\end{theorem}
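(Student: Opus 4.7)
The plan is to exploit the motivic structure theorem underlying Theorem \ref{thm:main}: the Chow motive controlling $\kappa(C_t)$ is isomorphic to $\mathfrak{h}^1(\widehat{E})$, and, crucially, this isomorphism should be constructed \emph{relatively} over a base $T$ parametrizing bielliptic Picard curves in characteristic $\neq 2,3$ (essentially $T = \Spec \Z[1/6][t,(t^2-1)^{-1}]$), with universal family $\mathcal{C}\to T$ and relative Jacobian $\mathcal{J}\to T$. Clearing denominators in the correspondences realizing the motivic isomorphism produces a single positive integer $N$ and an algebraic correspondence $\Gamma$ on $\widehat{E}\times_T \mathcal{J}$ such that, on every geometric fibre $t\in T$,
\[
N\cdot \kappa(C_t) \;=\; \Gamma_{t,*}\bigl([Q_t] - [O]\bigr) \quad \text{in } \CH_1(J_t).
\]

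Next, observe that $[Q_t] - [O]$ is a degree-zero divisor on the elliptic curve $\widehat{E}$, so it lies in $\Pic^0(\widehat{E})$. For any smooth projective variety, $\Pic^0$ coincides with the subgroup of divisors algebraically equivalent to zero, so $[Q_t]-[O]$ is algebraically trivial on $\widehat{E}$. Since algebraic correspondences preserve algebraic equivalence, $N\cdot \kappa(C_t) = \Gamma_{t,*}([Q_t]-[O])$ is algebraically trivial in $\CH_1(J_t)$, which is precisely the assertion.

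The main obstacle is the uniformity of $N$: the \emph{same} correspondence $\Gamma$ and integer $N$ must serve every bielliptic Picard curve, not just one at a time. This requires the motivic isomorphism underlying Theorem \ref{thm:main} and the associated denominator-clearing to be carried out at the level of the relative motive over $T$, rather than fibrewise. This uniformity should be a natural consequence of the explicit, geometric construction of the correspondences in the proof of Theorem \ref{thm:main}; alternatively, one can first establish algebraic triviality at the generic point of $T$ with some $N$, and then descend to arbitrary characteristic $\neq 2,3$ by base change and specialization, using that algebraic equivalence (once $N$ and a witnessing family of curves on $\mathcal{J}$ are fixed) specializes well.
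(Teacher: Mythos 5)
Your proposal is correct and is essentially the paper's own argument, repackaged.  The paper proves the result by producing, over the universal base $S=\A^2_{\Z[1/6]}\setminus\{\Delta_{a,b}=0\}$, an integer $N$, a smooth projective relative curve $\mathcal{X}\to S$ with two sections $\sigma_1,\sigma_2$, and a cycle $\mathcal{Z}\in\CH^1(\mathcal{J}\times_S\mathcal{X})$ with $\mathcal{Z}_{\sigma_1}=N\kappa(\mathcal{C})_{(1)}$ and $\mathcal{Z}_{\sigma_2}=0$; specialization then witnesses algebraic triviality fibrewise.  In fact the paper takes $\mathcal{X}=\mathcal{E}^\Delta$ itself and $\mathcal{V}=\Delta_{\mathcal{X}/S}$, which is exactly the family realizing your statement ``$[\mathcal{Q}]-[O]$ is algebraically trivial on the elliptic curve''; your step ``correspondences preserve algebraic equivalence'' is what the paper unwinds explicitly.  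The denominator-clearing you invoke corresponds to the paper's $N_1$ (making $\Phi_S(\kappa(\mathcal{C})_{(1)})$ integral) and $N_2$ (making the pullback correspondence integral), plus a further factor $N_3$ to kill the finite torsion by which a $\Q$-coefficient equality differs from an integral one; you should keep this last factor in mind.

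Two small corrections of bookkeeping rather than substance.  First, the base over which the universal isomorphism and the correspondence must be constructed is the two-dimensional $S$, not the one-parameter $T=\Spec\Z[1/6][t,(t^2-1)^{-1}]$: over a non-algebraically-closed $k$ not every bielliptic Picard curve has a model $y^3=x^4+2tx^2+1$, so $T$ does not parametrize all fibres.  Second, and relatedly, the elliptic curve appearing in the motivic isomorphism over $k$ is the sextic twist $E^\Delta$ and the $k$-rational point $\mathcal{Q}_{a,b}=(a^2-4b,\,a(a^2-4b))$, not the fixed curve $\widehat{E}$ and the point $Q_t$ (which involves a choice of cube root and is only available over an extension).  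Using $\mathcal{E}^\Delta/S$ and its $S$-section $\mathcal{Q}$ makes the witnessing family and its two marked sections defined over $k$ after specialization, which is needed to get algebraic triviality in $\CH_1(J)$ over $k$ rather than merely over $\bar k$.
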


Theorems \ref{thm: arbitrarily large order}-\ref{thm: torsion in griffiths intro} are quick corollaries of our proof of Theorem \ref{thm:main}, which we discuss below.

\subsection{Relation with modified diagonal cycles}

All results above hold verbatim for the canonical modified diagonal cycle $\Delta_{GS, \infty}(C_t) \in \CH_1(C_t^3)$ defined by Gross and Schoen \cite{GrossSchoen}; see \S\ref{sec: ceresa and GS cycles} for the definition of $\Delta_{GS,e}(C)$ and the class $\Delta_{GS}(C)\in \CH^3(C)\otimes\Q$. Indeed, $\kappa(C)$ vanishes if and only if $\Delta_{GS}(C)$ does \cite[Theorem 1.5.5]{ShouwuZhang}, and the relation $h(\Delta_{GS}(C)) = \frac32 h(\kappa(C))$ \cite[(1.1)]{dejong2021jumps} gives an analogue of Theorem \ref{thm: northcott intro} as well.

S.\ Zhang has shown a similar Northcott property for the height of $\Delta_{GS}(C)$ in families of curves over smooth projective bases \cite[Theorem 1.3.5]{ShouwuZhang}, i.e.\ compact subvarieties inside $\mathcal{M}_g$. As the family $C_t$ does not extend to a compact curve inside $\mathcal{M}_3$, Theorem \ref{thm: northcott intro} suggests that Zhang's Northcott property might very well extend to an open dense subset $U_g$ of $\mathcal{M}_g$. Indeed, several months after the first version of this paper was posted, Gao and Zhang proved that such a set $U_g$ exists \cite{GaoZhang-NorthcottCeresa}. Interestingly, in another preprint we have shown that the locus of Picard curves must be contained in the complement of $U_3$ \cite[\S1.3]{LagaShnidman-VanishingcriteriaCeresa}, so that Theorem \ref{thm: northcott intro} does not follow from \cite{GaoZhang-NorthcottCeresa}. 
%but instead gives  supplementary information. 

\subsection{Summary of proof}
A curve $C$ of the form \eqref{equation: bielliptic Picard curve} has an order $6$ automorphism (or more canonically, a $\mu_6$-action), and the class $\kappa_{\infty}(C)$ is fixed by the induced $\mu_6$-action on $J$. On the other hand, over $\C$, the $\mu_6$-invariant part of the Hodge structure $\HH^3(J(\C), \Q)(1)$ is two-dimensional and of type $(1,0) + (0,1)$. This suggests that there should exist an algebraic correspondence from $J$ to an elliptic curve that sends $\kappa_{\infty}(C)$ to a point on this elliptic curve.
Instead of attempting to construct this correspondence explicitly, we prove this by maneuvering within the flexible category of (pure) Chow motives.

Namely, we show that there is an isomorphism of Chow motives 
\begin{align}\label{equation: intro iso of chow motives}
\mathfrak{h}^3(J)^{\mu_6} \simeq \mathfrak{h}^1(E^{\Delta})(-1),
\end{align}
where $J$ is the Jacobian of any curve of the form $C\colon y^3 = x^4+ax^2+b$, $E^\Delta$ is a certain twist of $\widehat{E}$ depending on $a$ and $b$, and the remainder of the notation will be explained later.
It is crucial for our proof that we find a universal such isomorphism, in the category of relative Chow motives over the parameter space $S = \A^2_{\Z[1/6]}\setminus \{\Delta_{a,b} =  0\}$. The elliptic scheme $\mathcal{E}^\Delta \to S$ can then be thought of as the universal $j$-invariant $0$ elliptic curve with a nonzero marked point $\mathcal{Q}$.  The main ingredients in the construction of this universal isomorphism are the Chow--Künneth decomposition for motives of abelian schemes (via Beauville's Fourier transform) \cite{Beauville-surlanneadechow, DeningerMurre}, Moonen's refinement of this decomposition in the presence of extra endomorphisms \cite{Moonen}, and our study of the Galois action on the endomorphism algebra of the universal Prym surface $\mathcal{P} \to S$ \cite{LagaShnidman}. The latter endomorphism algebra is a nonsplit quaternion algebra over $\Q$.

The Ceresa cycle of $C$ naturally lives in $\CH^2(\mathfrak{h}^3(J)^{\mu_6})$ (Lemma \ref{lem: ceresa specializations}), so the question becomes:  what is the corresponding element of $\CH^2(\frak{h}^1(E^{\Delta}(-1)) = E^{\Delta}(k) \otimes \Q$, under the isomorphism \eqref{equation: intro iso of chow motives}?  We show that it corresponds to a $\Q^{\times}$-multiple of the marked point $\mathcal{Q}$ by showing it universally over $S$, using two soft inputs.
First, the elliptic scheme $\mathcal{E}^{\Delta}\rightarrow S$ (which after $\G_m$-scaling, is essentially an elliptic surface) has Mordell--Weil group of rank 1, generated by $\mathcal{Q}$.
Second, we show by explicit computation that the universal Ceresa cycle $\kappa_{\infty}(\mathcal{C})$ has at least one infinite order specialization, using reduction mod $p$ arguments based on the methods of \cite{BlochCrelleI, BuhlerSchoenTop, EllenbergLoganSrinivasan}, see \S\ref{sec: ceresa and GS cycles}. 
Therefore $\kappa_{\infty}(\mathcal{C})$ corresponds to a nonzero section of $\mathcal{E}^\Delta$, and hence a multiple of $\mathcal{Q}$. 
Specializing at geometric points, we recover Theorem \ref{thm:main}. The construction allows us to quickly deduce the remaining results in the introduction as well.

The ``only if'' direction of Theorem \ref{thm:main} could be proved without the machinery of Chow motives, by implementing the above argument at the level of cohomology (via complex Abel-Jacobi maps and Hodge theory). However, this is not enough for the ``if'' direction since the injectivity of higher Abel-Jacobi maps for varieties over $\Qbar$ is very much open. The motivic approach therefore seems crucial for an unconditional result and is in any case more direct.

\subsection{Geometric interpretation via bigonal duality}\label{subsec:bigonal duality}

 $C_t \colon y^3 = x^4 + 2tx^2 + 1$ is a double cover of the elliptic curve $E_t \colon y^3 = x^2 + 2tx + 1$. Associated to any such double cover is the \define{bigonal dual cover} $\widehat{C}_t \to \widehat{E}_t$ constructed by Pantazis and Donagi \cite{Pantazis-Prymvarsgeodesicflow, Donagi-tetragonalconstruction}, whose Prym is dual to the Prym of $C_t \to E_t$; see \cite[\S2.7]{LagaShnidman} for a construction in the situation considered here.  In our case, the curve $\widehat{C}_t$ has model $y^3 = (x^2 + t)^2 - 1$ so that $\widehat{E}_t$ is isomorphic to $\widehat{E} \colon y^2 = x^3 + 1$, for all $t$. The branch locus of $\widehat{C}_t \to \widehat{E}_t$ consists of $\infty$ and the three points $(\sqrt[3]{t^2 - 1},t)$. Theorem \ref{thm:main} therefore says that the Ceresa cycle $\kappa_{\infty}(C_t)$ is torsion if and only if $\widehat{C}_t$ is branched along torsion points of $\widehat{E}_t$.  While this formulation is intriguing, our proof does not use the bigonal dual construction in a direct way.
 
 One might ask if this is a more general phenomenon: if $C\rightarrow E$ is a double cover of a genus $1$ curve by a genus $3$ curve, whose ramification locus contains a hyperflex point $\infty\in C(k)$, are the torsion properties of  $\kappa_{\infty}(C)$ related to the torsion properties of the branch locus of the dual cover $\widehat{C}\rightarrow \widehat{E}$?
Adam Logan has verified for us, using \cite{EllenbergLoganSrinivasan}, that out of a sampling of 99 such bielliptic curves whose dual cover is branched along $7$-torsion points, all of them have infinite order Ceresa cycle. Thus, the answer seems to be `no' in general. It would be interesting to test this hypothesis on other families of genus $3$ curves lying in the intersection (within $\mathcal{M}_3$) of the bielliptic locus and a Picard modular surface associated to an imaginary quadratic field other than $\Q(\sqrt{-3})$, but these families are non-trivial to write down.  
 In the aforementioned \cite{LagaShnidman-VanishingcriteriaCeresa}, we have generalized Theorem \ref{thm:main} to all genus three Picard curves $y^3 = f(x)$ using different methods. This suggests that the bielliptic structure which we exploit in this paper, while a very useful tool to understand and ``compute'' the Ceresa cycle, does not tell the whole story.

\subsection{Previous results}\label{subsec: previous results}

Several examples of nonhyperelliptic curves with torsion Ceresa cycle were recently found, conditional on Beilinson--Bloch type conjectures \cite{BisognoLiLittSrinivasan, GrossCeresa, Beauville, LilienfeldtShnidman}. 
Qiu and W.\ Zhang then gave some unconditional examples \cite{QiuZhang}, including one in genus 3 and one-dimensional families in genus $4$ and $5$.
Laterveer also found a two-dimensional family in genus $5$ \cite{Laterveer}.
Subsequently, Qiu--Zhang  found examples of plane quartic curves (dominated by Shimura curves) with trivial automorphism group and torsion Ceresa cycle \cite{QiuZhangII}.

The proofs of \cite{QiuZhang} and \cite{Laterveer} rely on the vanishing of the Chow motive $(\frak{h}^1(C)^{\otimes 3})^{\Aut(C)}$, and hence of $\frak{h}^3(J)^{\Aut(C)}$, for certain curves $C$, generalizing the observation that hyperelliptic curves have trivial Ceresa cycle.
For our curves $C = C_t$ with $t\neq 0,\pm 1$, the motive $\frak{h}^3(J)^{\Aut(C)} = \frak{h}^3(J)^{\mu_6}$ does not vanish, but \eqref{equation: intro iso of chow motives} shows that it is a direct summand of the motive of a curve, hence is still reasonably accessible. It would be interesting to study other families of curves where $\frak{h}^3(J)^{\Aut(C)}$ is ``small'' in a suitable sense. More generally, one can try to study the nonzero isotypic components of $\kappa(C)$ with respect to the action of $\End(J)$ on $\frak{h}^3(J)$.  For example, even though some of the examples in \cite{QiuZhangII} have trivial automorphism group, they all satisfy $\End(J) \neq \Z$.  

Aside from the aforementioned \cite{deJong}, Eskandari and Murty have shown that infinitely many Fermat curves have infinite order Ceresa cycle \cite{EskandariMurty}, proving a weak version of Corollary \ref{cor: main infinitude} (where the genus is allowed to grow). If we fix $g$ but allow for curves defined over $\Qbar$ (and not necessarily over $\Q$), then the analogue of Corollary \ref{cor: main infinitude} holds for every genus $g \geq 3$ by \cite[Theorem 1.3.5]{ShouwuZhang}.

Prior to \cite{QiuZhang}, Beauville and Schoen showed that the Ceresa cycle of the curve $y^3z = x^4 + xz^3$ is torsion in the Griffiths group \cite{BeauvilleSchoen}. It was observed in \cite{LilienfeldtShnidman} that their proof applies to the bielliptic Picard curve $C_0$ as well. Theorem \ref{thm: torsion in griffiths intro} gives the first example of a {\it family} of plane quartic curves with this property. %Are there any others? \ari{We now know the answer to this question...}     

\subsection{Further questions}

Let $V_{g}$ be the subset of the coarse moduli space of curves $\mathcal{M}_g$ over $\Q$ consisting of those (geometric isomorphism classes of) curves $C$ such that $\kappa(C)\in \CH_1(J)\otimes \Q$ vanishes.
This is a countable union of closed algebraic subvarieties \cite[Lemma 5.1]{LagaShnidman-VanishingcriteriaCeresa}.
Can one describe the positive dimensional components of the torsion locus $V_g$? Is $V_g$ Zariski dense in $\mathcal{M}_g$?
After the current paper was posted, several papers have appeared that study this torsion locus and its variants; see \cite{GaoZhang-NorthcottCeresa, LagaShnidman-VanishingcriteriaCeresa, KerrTayou-CeresaHodgetheory}.

\subsection{Acknowledgements}
We thank Henri Darmon, David Lilienfeldt, and Padma Srinivasan for helpful conversations and remarks.  We thank Adam Logan for numerically testing Theorem \ref{thm:main} (while it was still a conjecture), using the forthcoming \cite{EllenbergLoganSrinivasan}. We thank Dick Gross, whose question helped motivate us to prove Theorem \ref{thm:main}.  We thank Tim Dokchitser and Drew Sutherland for helping us compute the central $L$-values mentioned in \S\ref{sec: Griffiths group}. Finally, we thank the referees for their careful reading of the paper.
This research was carried out while the first author was a Research Fellow at St John's College, University of Cambridge.
The second author was funded by the European Research Council (ERC, CurveArithmetic, 101078157).

\subsection{Notation and conventions}\label{subsec: notations and conventions}

\begin{itemize}
    \item All fields considered in this paper will be of characteristic $\neq 2, 3$.
    Given a field $k$, we denote by $k^{\mathrm{sep}}\subset \bar{k}$ a choice of separable and algebraic closure respectively, and the absolute Galois group by $\Gal_k = \Gal(k^{\mathrm{sep}}/k)$.
    \item A \define{variety} over a field $k$ is a separated scheme of finite type over $k$. A \define{curve} is a smooth, projective and geometrically integral variety of dimension $1$.

    \item Let $X$ be a smooth and quasi-projective scheme over a Dedekind domain (which might be a field). Then the Chow groups $\CH^p(X)$ of codimension $p$ cycles on $X$ (modulo rational equivalence and with $\Z$-coefficients) are defined and come equipped with flat pullbacks, proper pushforwards and an intersection product, see \cite[\S20.2]{Fulton-intersectiontheory}. If $Z\subset X$ is a codimension $p$ reduced closed subscheme write $[Z]$ for its class in $\CH^p(X)$.

    \item If $X$ is a smooth projective variety over a field $k$ we write $\CH^p(X)_{\mathrm{hom}} \subset \CH^p(X)$ for the classes in the kernel of the $\ell$-adic cycle class map for every prime $\ell$ invertible in $k$. 

    \item If $T\rightarrow S$ is a morphism of schemes and $X$ is an $S$-scheme we write $X_T$ for the $T$-scheme $X\times_S T$ and $X(T)$ for the set of sections of $X_T\rightarrow T$. If $T = \Spec(R)$ is affine we sometimes write $X_R$ and $X(R)$ instead of $X_T$ and $X(T)$. 
    \item If $A, B\rightarrow S$ are abelian schemes we denote by $\Hom(A, B)$ the set of $S$-homomorphisms between $A$ and $B$ and $\Hom^0(A,B) = \Hom(A,B) \otimes \Q$.
    We similarly write $\End(A) = \Hom(A, A)$ and $\End^0(A) = \End(A)\otimes \Q$.
    We write $A^{\vee}\rightarrow S$ for the dual abelian scheme (see \cite[Chapter 1, \S1]{faltingschai}) and write $\Hom^{\sym}(A, A^{\vee})$ for the subset of all self-dual homomorphisms $A\rightarrow A^{\vee}$.
    \item If $A/S$ is an abelian scheme we denote by $(n)$ or $(n)_A$ the multiplication-by-$n$ endomorphism $A\rightarrow A$.
    \item Group actions are always assumed to be \emph{left} actions.
\end{itemize}

\section{Ceresa and modified diagonal cycles}\label{sec: ceresa and GS cycles}

We recall the definitions of Ceresa and modified diagonal cycles, and show, by means of an example, a way to verify that $\kappa_e(C)$ is infinite order, for genus 3 curves over $\Qbar$. The latter is based on the methods of \cite{BuhlerSchoenTop} and \cite{EllenbergLoganSrinivasan}, which are in turn based on ideas of Bloch \cite{BlochCrelleI}. In this section, and for the rest of the paper, we work over a general (not necessarily algebraically closed) field $k$ of characteristic $\neq 2,3$. 

\subsection{Ceresa and modified diagonal cycles}\label{subsec: ceresa and GS cycles def}

Let $C/k$ be a curve of genus $g\geq 2$ with Jacobian $J$.
Let $e$ be a degree-$1$ divisor on $C$ and let $\iota_e\colon C\rightarrow J$ be the Abel--Jacobi map based at $e$.
We define the Ceresa cycle based at $e$ to be
\begin{align*}
    \kappa_e(C) := [\iota_e(C)] - (-1)_*[\iota_e(C)]\in \CH_1(J).
\end{align*}

The modified diagonal cycle of Gross and Schoen \cite{GrossSchoen} is a close relative of the Ceresa cycle. For $e \in C(k)$,  the \define{modified diagonal cycle} $\Delta_{GS,e} = \Delta_{GS,e}(C)$ based at $e$ is   
    \[\Delta_{GS,e} := \Delta_{123} - \Delta_{12} - \Delta_{13} - \Delta_{23} + \Delta_1 + \Delta_2 + \Delta_3 \in \CH^2(C^3),\]
where $\Delta_{123} = \{(c,c,c) \colon c \in C\}$, $\Delta_{12} = \{(c,c,e) \colon c \in C\}$, and so on. 
For a general degree $1$ divisor $e = \sum_i n_i [p_i]$ on $C$, we define $\Delta_{GS,e}(C) = \sum_{i} n_i \Delta_{GS,p_i}\in \CH^2(C^3)$.

We collect some useful facts about the Ceresa and modified diagonal cycles.

\begin{proposition}
The cycle classes $\kappa_e(C)$ and $\Delta_{GS,e}(C)$ are homologically trivial.    
\end{proposition}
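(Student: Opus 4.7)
The plan is to verify that each cycle class maps to zero under the $\ell$-adic cycle class map for every prime $\ell$ invertible in $k$, by reducing both statements to Künneth-type computations in $\ell$-adic cohomology of $J_{\bar k}$ and $C_{\bar k}^3$.

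For the Ceresa cycle, I would first reduce to $k = \bar k$. The idea is to use that $H^*(J_{\bar k}, \Q_\ell) = \bigwedge^* H^1(J_{\bar k}, \Q_\ell)$ as graded algebras, so the automorphism $(-1)_J$ acts as $(-1)^k$ on $H^k(J_{\bar k}, \Q_\ell)$. The cycle class $[\iota_e(C)]$ lies in $H^{2(g-1)}(J_{\bar k}, \Q_\ell)(g-1)$, a cohomology group of even degree, on which $(-1)_J^*$ therefore acts trivially. Consequently the cycle classes of $[\iota_e(C)]$ and $(-1)^*[\iota_e(C)]$ agree, and their difference $\kappa_e(C)$ is homologically trivial. (The same argument works for an arbitrary degree-$1$ divisor class $e \in \CH^1(C)$, since by Lemma \ref{lemma: base point ceresa} the map $e \mapsto \kappa_e(C)$ factors through the divisor class group, and translations of $J$ act trivially on cohomology.)

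For the modified diagonal cycle, I would again reduce to $k = \bar k$ and compute $[\Delta_{GS,e}]$ in $H^4(C_{\bar k}^3, \Q_\ell)(2)$ component-by-component using the Künneth decomposition
\[
H^4(C_{\bar k}^3, \Q_\ell)(2) = \bigoplus_{i+j+l=4} H^i(C_{\bar k},\Q_\ell)\otimes H^j(C_{\bar k},\Q_\ell)\otimes H^l(C_{\bar k},\Q_\ell)(2).
\]
Equivalently, it suffices to show that $[\Delta_{GS,e}]$ pairs to zero with each Künneth component of $H^2(C_{\bar k}^3, \Q_\ell)$ via intersection on $C_{\bar k}^3$. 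For a class $\alpha_1 \otimes \alpha_2 \otimes \alpha_3$ with $\deg(\alpha_i) = d_i$ and $d_1 + d_2 + d_3 = 2$, the pairing with each summand $\Delta_I$ of $\Delta_{GS,e}$ reduces, by the projection formula, to $\int_{C} \beta_1 \wedge \beta_2 \wedge \beta_3$ where $\beta_i$ is either $\alpha_i$ or the evaluation $\alpha_i(e)$ (a scalar, present only when $\alpha_i \in H^0$). A short case analysis on the profile $(d_1,d_2,d_3)$ — with the only nontrivial cases being permutations of $(2,0,0)$ and $(1,1,0)$ — shows that the inclusion-exclusion sign pattern defining $\Delta_{GS,e}$ causes all contributions to cancel.

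The main conceptual content is the Künneth/alternating-sum identity used for $\Delta_{GS,e}$; that is the only step requiring actual computation, and it is a small finite check. The Ceresa statement is essentially formal once one records the parity of the $(-1)_J^*$-action on cohomology of the Jacobian. Neither step is a real obstacle, and both arguments are insensitive to the characteristic of $k$ (provided $\ell \neq \mathrm{char}(k)$) and apply uniformly to any degree-$1$ divisor class $e$.
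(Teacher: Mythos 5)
Your argument is correct. For the Ceresa cycle your proof is exactly the paper's: the class of $\iota_e(C)$ lives in even-degree cohomology of $J_{\bar k}$, on which $(-1)^*$ acts trivially (and, as you note, the case of a general degree-$1$ class $e$ reduces to this via the correspondence of Lemma \ref{lemma: base point ceresa}). For the modified diagonal cycle the paper does not compute anything: it simply cites Gross--Schoen \cite[Proposition 3.1]{GrossSchoen}. Your K\"unneth/Poincar\'e-duality computation --- pairing $[\Delta_{GS,e}]$ against the K\"unneth components of $H^2(C_{\bar k}^3,\Q_\ell)$ and checking that the inclusion--exclusion signs cancel on the profiles $(2,0,0)$ and $(1,1,0)$ --- is essentially the standard proof of that cited proposition, so you gain self-containedness at the cost of redoing a known computation; the citation route is shorter and is stated for an arbitrary Weil cohomology. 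One minor caveat: your pairing argument is with $\Q_\ell$-coefficients, so it shows vanishing of the cycle class up to torsion; this matches the rational conventions of Gross--Schoen and suffices for the way the paper uses homological triviality, but if one insisted on the $\Z_\ell$-cycle class map one would need a little more care. Otherwise there is no gap.
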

\begin{proof}
    For $\kappa_e(C)$, this is immediate from the fact that $(-1)^*$ acts trivially on even dimensional cohomology. For $\Delta_{GS,e}(C)$, this is \cite[Proposition 3.1]{GrossSchoen}.
\end{proof}

\begin{proposition}\label{prop: Ceresa/GS torsion implies (2g-2)e canonical}
    Suppose that either $\kappa_e(C)\in \CH_1(J)$ or $\Delta_{GS,e}(C)\in \CH^2(C^3)$ is torsion. 
    Then $(2g-2)e - K_C$ is torsion in $\CH_0(C)$, where $K_C$ is the canonical divisor class.
\end{proposition}
\begin{proof}
    If $\kappa_e(C)$ is torsion, then this follows from \cite[Lemma 2.10]{LagaShnidman-VanishingcriteriaCeresa}, but we give an alternative proof here.
    If $D$ is a divisor class of degree $g-1$ on $C$, let $\Theta_D$ denote the image of the map $\Sym^{g-1}(C) \rightarrow J$ defined by $x\mapsto [x]-D$.
    If $\kappa_e(C)$ is torsion, then $[\iota_e(C)] = (-1)_*[\iota_e(C)]$ in $\CH_1(J)\otimes \Q$.
    Taking the $(g-1)$-fold Pontryagin product of this equality and dividing by $(g-1)!$, we get $[\Theta_{(g-1)e}] = (-1)_*[\Theta_{(g-1)e}]$ in $\CH_1(J)\otimes {\Q}$.
    By Riemann--Roch, $(-1)_*[\Theta_{(g-1)e}] = [\Theta_{K_C- (g-1)e}]$.
    Since $\Theta_{(g-1)e}$ defines a principal polarization, the map $\varphi\colon J(k)\rightarrow \CH^1(J)_{\hom}$ defined by $x\mapsto [\Theta_{(g-1)e + x}] - [\Theta_{(g-1
    )e}]$ is an isomorphism.
    Since 
    \[\varphi(K_C-(2g-2)e)=[\Theta_{K_C- (g-1)e}]-[\Theta_{(g-1)e}] = (-1)_*[\Theta_{(g-1)e}]-[\Theta_{(g-1)e}]\] 
    is torsion in $\CH^1(J)_{\hom}$, the divisor class $K_C-(2g-2)e$ is torsion.

    If $\Delta_{GS,e}(C)$ is torsion, this follows from \cite[Proposition 2.3.2(1)]{QiuZhang} (where it is assumed that $k=\C$, but whose proof is valid over any field $k$).
\end{proof}

\begin{lemma}\label{lemma: if e-e' torsion, then Ceresa cycles are same}
    If $e,e'$ are degree-$1$ divisors on $C$ such that $e-e'$ is torsion in $\CH_0(C)$, then $\kappa_e(C)-\kappa_{e'}(C)\in \CH_1(J)$ and $\Delta_{GS,e}(C)-\Delta_{GS,e'}(C)\in \CH^2(C^3)$ are both torsion.
\end{lemma}
\begin{proof}
    The fact that $\kappa_e(C)-\kappa_{e'}(C)$ is torsion follows from \cite[Lemma 2.11]{LagaShnidman-VanishingcriteriaCeresa}, and the fact that $\Delta_{GS,e}(C)-\Delta_{GS,e'}(C)$ is torsion follows from \cite[Proposition 3.6]{GrossSchoen}.
\end{proof}

If $k$ is algebraically closed, choose a degree-$1$ divisor $e$ such that $(2g-2)e = K_C$ in $\CH_0(C)\otimes {\Q}$ and define $\kappa(C)$ to be the image of $\kappa_e(C)$ in $\CH_1(J)\otimes {\Q}$, and $\Delta_{GS}(C)$ to be the image of $\Delta_{GS,e}(C)$ in $\CH^2(C^3)\otimes \Q$.
Lemma \ref{lemma: if e-e' torsion, then Ceresa cycles are same} shows that $\kappa(C)$ is independent of the choice of $e$, and Proposition \ref{prop: Ceresa/GS torsion implies (2g-2)e canonical} shows that $\kappa(C)= 0$ if and only if $\kappa_e(C)$ is torsion for some degree-$1$ divisor $e$. 
Similar remarks apply to $\Delta_{GS}(C)$.

%If $k$ is not algebraically closed, then under mild assumptions there exists a class $\kappa(C)\in \CH_1(J)\otimes \Q$ whose image under $\CH_1(J)\otimes \Q\rightarrow \CH_1(J_{\bar{k}})\otimes \Q$ equals $\kappa(C_{\bar{k}})$.
%For example, this holds whenever $k$ is perfect or the characteristic of $k$ does not divide $2g-2$, since then there exists a $(2g-2)$th root of $K_C$ over a separable extension of $k$ and Chow groups with $\Q$-coefficients satisfy Galois descent.
%We will not need this fact in what follows.

\begin{theorem}[Shou-Wu Zhang]\label{thm: ceresa GS relation}
    For every degree-$1$ divisor $e$ on $C$, $\kappa_e(C)$ is torsion if and only if $\Delta_{GS,e}(C)$ is torsion.
\end{theorem}
\begin{proof}
    By Proposition \ref{prop: Ceresa/GS torsion implies (2g-2)e canonical}, we may assume $(2g-2)e = K_C$ in $\CH_0(C)\otimes \Q$.
    The result then follows from \cite[Theorem 1.5.5]{ShouwuZhang}.
\end{proof}

\subsection{A non-vanishing criterion}
Assume now that $k$ is a number field. Since $\Delta_{GS,e}$ is homologically trivial, we may consider for every prime $\ell$ its image under the $\ell$-adic Abel-Jacobi map 
\[\AJ_\ell \colon \CH^2(C^3)_{\mathrm{hom}} \longrightarrow \HH^1(\Gal_k, \HH^3(C_{k^{\sep}}^3, \Z_\ell(2)));\]
see \cite[\S9]{Jannsen} for more details concerning this map.
The following proposition is based on the methods of  \cite{BuhlerSchoenTop} and \cite{EllenbergLoganSrinivasan}. 
% For a group $G$, an element $g \in G$ of finite order, and a prime $\ell$, we write $o_\ell(g)$ for the $\ell$-part of the order of $g$. 
    \begin{proposition}\label{prop:torsion criterion}
        Let $e \in C(k)$ and let $v$ be a finite place of $k$ of good reduction for $C$. Let $\ell > 3$ be a prime coprime to the norm of $v$. Let $(C_v,e_v)$ be the smooth reduction of $(C,e)$ over the finite residue field $\F_v$. Define
        \begin{align}\label{equation: sum of points mod p jacobian}
        D := \sum_{c \in C_v(\F_v)} (c - e_v) \in J(\F_v).    
        \end{align}
        If the order of $D$ is a multiple of $\ell$, then $\AJ_\ell(\Delta_{GS,e}) \neq 0$.
    \end{proposition}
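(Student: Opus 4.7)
The plan is to use the specialization compatibility of the $\ell$-adic Abel--Jacobi map to reduce to a computation on the special fiber $C_v^3$, and there identify a scalar shadow of $\AJ_\ell(\Delta_{GS,e_v})$ with the class of $D$. Since $C^3$ has good reduction at $v$ and $\ell$ is coprime to the residue characteristic of $v$, the class $\AJ_\ell(\Delta_{GS,e})$ is unramified at $v$; by smooth proper base change, its restriction to the decomposition group at $v$ coincides with $\AJ_\ell(\Delta_{GS,e_v})$ inside the local cohomology
\[
H^1(\Gal_{\F_v},\, H^3(C^3_{v,\bar{\F_v}},\,\Z_\ell(2))),
\]
which, because $\Gal_{\F_v}\cong\hat\Z$ is procyclic, is the Frobenius coinvariants of the coefficient module. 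Thus it suffices to prove non-triviality there.

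Next I would exploit the Künneth decomposition. Since $\Delta_{GS,e}$ is a modified diagonal, the only Künneth summand of $H^3(C_v^3,\Z_\ell(2))$ that can detect its Abel--Jacobi image is the $(1,1,1)$-piece $H^1(C_v)^{\otimes 3}$: the $(2,1,0)$-type summands are killed because $p_{ij*}\Delta_{GS,e}=0$ in $\CH^1(C_v^2)$ by a direct computation. Contracting two factors of the $(1,1,1)$-piece via the cup product $H^1(C_v)\otimes H^1(C_v)\to H^2(C_v)=\Z_\ell(-1)$ yields a Galois-equivariant map to $H^1(C_v,\Z_\ell)(1)\cong T_\ell J_v$ (Poincaré duality on the curve); geometrically, this contraction is realized by pullback along the partial diagonal $\delta_{12}\colon C_v^2\hookrightarrow C_v^3$, $(x,z)\mapsto (x,x,z)$. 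Kummer theory combined with Lang's theorem ($H^1(\F_v,J_v)=0$) then identifies $H^1(\Gal_{\F_v}, T_\ell J_v)$ with $J_v(\F_v)\otimes\Z_\ell$, producing a homomorphism $\overline\Psi$ from the local cohomology above to $J_v(\F_v)\otimes \Z_\ell$.

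The heart of the proof is the claim that $\overline\Psi\bigl(\AJ_\ell(\Delta_{GS,e_v})\bigr)$ equals a nonzero $\Q^\times$-multiple of $D\otimes 1$. Following \cite{BlochCrelleI, BuhlerSchoenTop, EllenbergLoganSrinivasanVenkatesh}, one computes $\delta_{12}^*\Delta_{GS,e_v}$ as a homologically trivial $0$-cycle on $C_v^2$ (via excess-intersection theory, since $\delta_{12}$ has codimension $1$ while $\Delta_{GS,e_v}$ has codimension $2$), and then applies $\AJ_\ell$ at the level of the Albanese of $C_v^2$. Over $\F_v$, the class of the Frobenius graph on $C_v\times C_v$ computes the divisor $\sum_{c\in C_v(\F_v)}[c]$ on $C_v$ modulo torsion, and the combinatorial cancellations among the seven terms of $\Delta_{GS,e_v}$ then isolate exactly the single contribution $D=\sum_c(c-e_v)$. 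Granted this, the hypothesis $\ell\mid\mathrm{ord}(D)$ gives that $D\otimes 1 \neq 0$ in $J_v(\F_v)\otimes\Z_\ell$, whence $\AJ_\ell(\Delta_{GS,e_v})\neq 0$ and so $\AJ_\ell(\Delta_{GS,e})\neq 0$.

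The main obstacle is the final identification in the third step: both the excess-intersection analysis of $\delta_{12}^*\Delta_{GS,e_v}$ and the explicit matching with the Frobenius-graph contribution $D$ require careful cycle-theoretic bookkeeping. These are precisely the ingredients developed in \cite{BuhlerSchoenTop, EllenbergLoganSrinivasanVenkatesh}, upon which the proposition presumably relies.
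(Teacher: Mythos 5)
There is a genuine gap, and it sits exactly at what you yourself call the heart of the proof. The contraction you actually construct is pullback along the plain partial diagonal $\delta_{12}\colon (x,z)\mapsto (x,x,z)$, and this map cannot produce the cycle $D=\sum_{c\in C_v(\F_v)}(c-e_v)$: nothing in $\delta_{12}$ knows about $\F_v$-rational points, so there is no mechanism by which the sum over $C_v(\F_v)$ could appear. Indeed the intersection of $\delta_{12}$ with each term of $\Delta_{GS,e_v}$ is non-proper (e.g.\ $\delta_{12}$ meets $\Delta_{123}$ along a full copy of $C_v$), and the excess-intersection evaluation produces self-intersection/canonical-class type terms defined over the base field, not $D$. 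Your third paragraph then invokes ``the class of the Frobenius graph on $C_v\times C_v$'' to conjure the divisor $\sum_{c}[c]$, but the Frobenius graph is not part of any map you defined, so the claimed identification $\overline\Psi(\AJ_\ell(\Delta_{GS,e_v}))=\lambda\cdot D$ is asserted rather than derived, and deferring it to \cite{BuhlerSchoenTop, EllenbergLoganSrinivasanVenkatesh} does not close the gap, since that identification is precisely the content the proposition needs.

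The missing idea (Bloch's trick, and what the paper does) is to build Frobenius into the correspondence itself: work with
\[
\Gamma=\{(a,\mathrm{Fr}(a),b,b)\colon a,b\in C_v\}\subset C_v^3\times C_v,
\]
which is defined only over $\F_v$. Now the intersections with the seven terms of $\Delta_{GS,e_v}$ are proper, the $\F_v$-points of $C_v$ arise as the fixed points of $\mathrm{Fr}$, and a short direct computation gives $\Gamma_*(\Delta_{GS,v})=\sum_{c\in C_v(\F_v)}(c-e_v)=D$ in $\Pic(C_v)$. Because $\Gamma$ only exists over the special fiber, one then argues contrapositively via specialization: if $\AJ_\ell(\Delta_{GS,e})=0$, then by the specialization compatibility of \cite[Proposition 3.1]{BuhlerSchoenTop} the class $\AJ_\ell(D)\in \HH^1(\Gal_{k_v},\HH^1(C_{k_v^{\sep}},\Z_\ell(1)))$ vanishes, and by \cite[Lemma 1.12]{BuhlerSchoenTop} $D$ dies in $J(k_v)\otimes\Z_\ell$, i.e.\ has order prime to $\ell$. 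Your overall plan (specialize, project to a weight $-1$ piece, land in a Mordell--Weil group tensor $\Z_\ell$) is in the right spirit and would become correct if you replaced $\delta_{12}$ by the Frobenius-twisted embedding $(x,z)\mapsto(x,\mathrm{Fr}(x),z)$ and actually carried out the resulting (now proper) intersection computation; as written, the argument does not establish the link between $\AJ_\ell(\Delta_{GS,e})$ and $D$. (Your K\"unneth observation that $p_{ij*}\Delta_{GS,e}=0$ is correct but is not needed once the correspondence computation is done at the level of cycles.)
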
    
\begin{proof} Consider the correspondence \[\Gamma = \{(a,\mathrm{Fr}(a), b,b) \colon a,b \in C_v\} \subset C^3_v \times C_v,\] where $\mathrm{Fr}$ is the Frobenius morphism.  This gives a homomorphism 
    \[\Gamma_* \colon \CH^2(C^3_v) \to \CH^1(C_v) \simeq \Pic(C_v),\]
    defined by $\Gamma_*\beta = p_{2*}([\Gamma] \cdot p_1^*\beta)$, where $p_i$ are the projections from $C^3_v \times C_v$. Let $\Delta_{GS,v}$ be the modified diagonal cycle on $C_v^3$ with respect to $e_v$. 
    A short computation yields $\Gamma_*(\Delta_{GS,v}) = \sum_{c \in C_v(\F_v)} (c - e_v)  = D \in J(\F_v)$.
    
    If $\AJ_{\ell}(\Delta_{GS,e})=0$, then a specialization argument using \cite[Proposition 3.1]{BuhlerSchoenTop} shows that $\AJ_{\ell}(D) \in \HH^1(\Gal_{k_v},\HH^1(C_{k_v^{\mathrm{sep}}}, \Z_{\ell}(1)))$ is zero too. 
    By \cite[Lemma 1.12]{BuhlerSchoenTop} this implies that $D$ is zero in $J(k_v)\otimes \Z_{\ell}$, so $D$ has order coprime to $\ell$.
    We conclude that if $D$ has order divisible by $\ell$ then $\AJ_{\ell}(\Delta_{GS,e})\neq 0$.
\end{proof}

\subsection{An infinite order example}
We now show by example how  Proposition \ref{prop:torsion criterion} can be used to verify that $\kappa_e(C)$ has infinite order.
For the rest of the section, let $k = \Q$, let $C$ be the curve with plane model $y^3 = x^4 + x^2 + 1$, and let $\ell = 7$. Consider the modified diagonal cycle $\Delta_{GS,\infty}$ based at the unique point $\infty$ at infinity; note that $(2g-2)\infty =  4\infty$ is canonical.  

\begin{lemma}\label{lem: order multiple of 7}
    $\AJ_7(\Delta_{GS,\infty}) \neq 0$.
\end{lemma}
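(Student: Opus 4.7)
The plan is a direct application of Proposition \ref{prop:torsion criterion} with $\ell = 7$ and $e = \infty$. First I would pick a small prime $p$ of good reduction with $p \neq 7$: the equation $y^3 = x^4+x^2+1$ defines a smooth projective curve over $\F_p$ for every $p \neq 2,3$, so $p \in \{5, 11, 13, \dots\}$ all qualify. Next I would enumerate $C(\F_p)$ by running through $x \in \F_p$ and solving $y^3 = x^4+x^2+1$ in $\F_p$, together with the unique point at infinity, and form the class
\[ D = \sum_{c \in C(\F_p)} (c - \infty) \in J(\F_p). \]
Finally I would compute the order of $D$ in the finite abelian group $J(\F_p)$ and verify that $7$ divides it. By Proposition \ref{prop:torsion criterion}, this already forces $\AJ_7(\Delta_{GS}) \neq 0$.

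The only nontrivial step is the last one, which requires concrete arithmetic in the Jacobian of a nonhyperelliptic genus $3$ curve. One option is to implement divisor-class addition directly (via Khuri-Makdisi's algorithm, or Magma's built-in routines for plane-quartic Jacobians), compute $|J(\F_p)|$ from the Frobenius polynomial of $C_p$, and then test whether the multiple $(|J(\F_p)|/7) \cdot D$ is nonzero in $J(\F_p)$. A cleaner alternative is to exploit the fact that $C$ is bielliptic in several ways: the factorization $x^4+x^2+1 = (x^2+x+1)(x^2-x+1)$ yields natural quotient maps onto elliptic curves, so $J$ is isogenous to a product of abelian varieties of lower dimension, and projecting $D$ onto one of the simpler factors reduces the order computation to elliptic or genus $2$ arithmetic.

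I expect the main obstacle to be this concrete Jacobian computation rather than any theoretical issue. Since $|J(\F_p)| \approx p^3$ and typically has many small prime factors, heuristically a choice like $p = 5$ or $p = 11$ should already suffice. Once a working $(p, D)$ is exhibited, the conclusion is immediate from Proposition \ref{prop:torsion criterion}, with no additional input required.
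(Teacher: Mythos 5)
Your proposal is essentially the paper's own proof: it applies Proposition \ref{prop:torsion criterion} with $\ell=7$ at a good prime (namely $v=41$) and, exactly as you suggest, uses the bielliptic quotient $\pi\colon C\to E\colon y^3=x^2+x+1$ (pairing $c$ with $\tau(c)$) to push $2D$ into $\pi^*\Pic^0(E)$ and reduce the order computation to elliptic-curve arithmetic, where one finds a point of order $7$. The only caveat is that your hoped-for primes $p=5$ or $11$ need not work --- the paper had to go up to $41$ --- but since you frame this as a finite search for a working $(p,D)$, that is not a gap.
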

\begin{proof}
    We take $v = 41$ in Proposition \ref{prop:torsion criterion} and show that $D\in J(\F_{41})$ of \eqref{equation: sum of points mod p jacobian} has order divisible by $7$ by explicit computation.  Let $\pi \colon C \to E \colon y^3 = x^2 + x + 1, (x,y)\mapsto (x^2,y)$ be the double cover and let $\tau \in \Aut(C)$ be the corresponding covering involution. Pairing points $c$ and $\tau(c)$ together, we see that $D$ is a sum of points on $\pi^*\Pic^0(E)$ plus some leftover ramification points. Thus $2D$ lies in $\pi^*\Pic^0(E)$, and identifying $\Pic^0(E)$ with $E$, we have
    \[2D = \pi^*\left((0,1) + 2\sum_{q \in E(\F_{41})_{\mathrm{lift}}} q\right),\]
    where $E(\F_{41})_{\mathrm{lift}}$ is the set of non-branch points in the image of $\pi \colon C(\F_{41}) \to E(\F_{41})$. We compute in Sage that the interior sum equals $(37,15) \in E(\F_{41})$ which has order $7$.
    Therefore $2D = \pi^*((37,15))$ has order $7$ too, hence the order of $D$ is divisible by $7$.
    \end{proof}

Let $\Sym^3(C) := C^3/S_3$ be the symmetric cube of $C$, and let $V := \HH^3(\Sym^3(C_{\overline{\Q}}), \Z_7(2))$. 

\begin{lemma}\label{lem: explicit V}
$V \simeq \HH^3(J_{\overline{\Q}}, \Z_7(2)) \oplus \HH^1(C_{\overline{\Q}}, \Z_7(1))$ as $\Gal_{\Q}$-representations. 
\end{lemma}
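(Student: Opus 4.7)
The plan is to use the quotient presentation $\Sym^3(C) = C^3/S_3$. Since $\ell = 7$ is coprime to $|S_3| = 6$, standard theory of \'etale cohomology of finite quotients yields a canonical $\Gal_\Q$-equivariant isomorphism $V \cong H^3(C^3_{\Qbar}, \Z_7(2))^{S_3}$. By the K\"unneth formula, $H^3(C^3_{\Qbar}, \Z_7) = \bigoplus_{a+b+c=3} H^a(C)\otimes H^b(C)\otimes H^c(C)$, and as $H^i(C) = 0$ for $i \geq 3$, only multidegree $(1,1,1)$ and the six permutations of $(2,1,0)$ contribute.

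For the $(1,1,1)$-summand $H^1(C)^{\otimes 3}$, the $S_3$-action is twisted by the Koszul sign rule; since $H^1(C)$ is concentrated in odd degree, this reduces to the sign character, so the invariants are $\Lambda^3 H^1(C, \Z_7)$. Using the $\Q$-rational point $\infty \in C(\Q)$, the Abel-Jacobi morphism $\iota_\infty \colon C \to J$ is defined over $\Q$ and induces a $\Gal_\Q$-equivariant isomorphism $H^1(J, \Z_7) \xrightarrow{\sim} H^1(C, \Z_7)$. Hence $\Lambda^3 H^1(C, \Z_7) \cong \Lambda^3 H^1(J, \Z_7) \cong H^3(J, \Z_7)$ $\Gal_\Q$-equivariantly.

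The six summands whose multidegrees are permutations of $(2,1,0)$ form a single transitive $S_3$-orbit (there are no Koszul signs since only one tensor factor is odd). Projection onto any fixed summand, say $H^0(C) \otimes H^1(C) \otimes H^2(C)$, restricts to a $\Gal_\Q$-equivariant isomorphism between the $S_3$-invariants and that summand, which equals $H^1(C, \Z_7)(-1)$ (using $H^0(C, \Z_7) = \Z_7$ and $H^2(C, \Z_7) = \Z_7(-1)$). Assembling both contributions and applying the Tate twist $(2)$ yields the claimed decomposition $V \cong H^3(J_{\Qbar}, \Z_7(2)) \oplus H^1(C_{\Qbar}, \Z_7(1))$.

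I anticipate no serious obstacle: this is a routine K\"unneth-and-invariants computation, and $\Gal_\Q$-equivariance is automatic throughout thanks to the rational base point $\infty$. The only care point is confirming the sign of the $S_3$-action on the $(1,1,1)$-piece, which forces antisymmetric (rather than symmetric) invariants and so matches $\Lambda^3 H^1$ with $H^3(J)$ of the correct dimension $\binom{6}{3} = 20$.
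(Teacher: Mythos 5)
Your proof is correct, but it takes a genuinely different route from the paper. The paper exploits the geometry of the Abel--Jacobi map $\Sigma\colon \Sym^3(C)\to J$, $D\mapsto D-3\infty$: since $C$ is a nonhyperelliptic genus $3$ curve, Riemann--Roch shows the non-injectivity locus consists of divisors $D\sim 4\infty-x$, so $\Sigma$ is the blow-up of $J$ along the curve $\{\infty-x\mid x\in C\}\simeq C$, and the lemma follows from the blow-up formula for cohomology (SGA5). You instead use the presentation $\Sym^3(C)=C^3/S_3$, the invariants isomorphism $\HH^*(\Sym^3 C,\Z_7)\simeq \HH^*(C^3,\Z_7)^{S_3}$ (valid since $7\nmid 6$), K\"unneth, and a direct computation of the $S_3$-invariants, with the Koszul sign on the $(1,1,1)$-piece correctly forcing $\Lambda^3\HH^1(C)\simeq\HH^3(J)$ and the six $(0,1,2)$-type summands contributing a single copy of $\HH^1(C)(-1)$; Galois-equivariance holds throughout because $\infty\in C(\Q)$ and the $S_3$-action are defined over $\Q$. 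The trade-offs: your argument is more elementary and avoids the blow-up description (hence the nonhyperellipticity/Riemann--Roch input), but it relies on $\ell$ being prime to $|S_3|$, so it gives the integral statement only for $\ell>3$ (which suffices here, as $\ell=7$), whereas the paper's blow-up argument identifies the integral cohomology for every $\ell$ and also records the geometric source of the $\HH^1(C)(-1)$ summand.
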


\begin{proof}
 Since $C$ is a nonhyperelliptic genus $3$ curve, the Abel--Jacobi map $\Sigma \colon \Sym^3(C) \to J$ sending a degree $3$ divisor $D$ to $D - 3\infty$ is the blow-up of $J$ along the image of those $D$ with the property that $\dim \HH^0(C, \calO(D))=2$ \cite[Theorem 2.3]{MunozPorras-abeljacobi}. By Riemann-Roch, these are exactly the degree-$3$ divisors linearly equivalent to $4\infty-x$ for some unique $x\in C$. 
  We conclude that $\Sigma$ is the blow-up of $J$ along the closed subvariety $\{ \infty -x \mid x\in C\} \simeq C$. 
  The lemma now follows from the description of the cohomology of a blow-up \cite[VII, Th\'eor\`eme 8.1.1]{SGA5}. 
\end{proof}

\begin{lemma}\label{lem:section is not 0}
    The class $\Delta_{GS,\infty}$ has infinite order.
\end{lemma}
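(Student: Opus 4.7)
My plan is to combine Lemmas \ref{lem: order multiple of 7} and \ref{lem: explicit V} to upgrade the nonvanishing $\AJ_7(\Delta_{GS}) \neq 0$ to infinite order of $\Delta_{GS}$. Suppose for contradiction that $\Delta_{GS}$ is torsion of some order $N$; then $\AJ_7(\Delta_{GS})$ is a nonzero $N$-torsion class in $\HH^1(\Gal_\Q, V)$, and I aim to derive a contradiction by projecting onto a summand where torsion can be controlled.

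Since $\Delta_{GS}$ is $S_3$-invariant, it descends (up to the factor $6$) to a class on $\Sym^3(C)$, whose $\AJ_7$ lives in $\HH^1(\Gal_\Q, V)$. The first step is to argue that the correspondence $\Gamma$ used in the proof of Lemma \ref{lem: order multiple of 7}, at the cohomological level, factors through projection onto the $\HH^1(C_{\Qbar}, \Z_7(1))$-summand of the decomposition in Lemma \ref{lem: explicit V}; this is consistent with $\Gamma_*$ landing in $\Pic(C_v)$ rather than in $\HH^3$ of a threefold. Consequently, Lemma \ref{lem: order multiple of 7} in fact detects that the projection $\bar\alpha$ of $\AJ_7(\Delta_{GS})$ onto $\HH^1(\Gal_\Q, \HH^1(C_{\Qbar}, \Z_7(1)))$ is nonzero, with reduction at $v = 41$ equal, under Kummer theory, to $D \in J(\F_{41}) \otimes \Z_7$.

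The second step uses that $\HH^1(C_{\Qbar}, \Z_7(1))$ is the $7$-adic Tate module of $J_{\Qbar}$, so by Kummer theory a suitable subquotient of $\HH^1(\Gal_\Q, \HH^1(C_{\Qbar}, \Z_7(1)))$ is identified with $J(\Q) \widehat\otimes \Z_7$. The torsion assumption on $\Delta_{GS}$ forces $\bar\alpha$ to be torsion, hence to come from $J(\Q)_{\mathrm{tors}}[7^\infty]$. Since $41$ is coprime to $7$, reduction is injective on prime-to-$41$ torsion, so such a class would yield a nontrivial $7$-torsion point of $J(\Q)$ whose reduction at $41$ has nontrivial $7$-part (namely $D$); a direct computation showing $J(\Q)[7] = 0$ then gives the contradiction.

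The main obstacle I expect is the first step: precisely tracking how $\Gamma_*$ factors through the decomposition of Lemma \ref{lem: explicit V} at the integral $\ell$-adic level, and keeping control of the Galois action on the two summands and of the relevant invariants $\HH^0(\Gal_\Q, -)$ that enter Kummer theory. Once this factorization is in place, the Kummer translation and the final small computation of $J(\Q)[7^\infty]$ should close the argument cleanly.
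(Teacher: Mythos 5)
Your overall strategy matches the paper at the highest level---show the Abel--Jacobi image is nonzero, then rule out torsion---but the mechanism you propose for ruling out torsion diverges from the paper and contains a real gap. The central difficulty is your ``first step'': you want the nonvanishing in Lemma \ref{lem: order multiple of 7} to single out the projection of $\AJ_7(\Delta_{GS})$ onto the $\HH^1(C_{\Qbar},\Z_7(1))$-summand of Lemma \ref{lem: explicit V}. But the correspondence $\Gamma$ in Proposition \ref{prop:torsion criterion} is defined only over $\F_v$ (it involves the Frobenius morphism and does not lift to characteristic $0$), so it does not act on $\HH^1(\Gal_\Q, V)$; the argument in Proposition \ref{prop:torsion criterion} is a one-way specialization implication ($\AJ_7(\Delta_{GS}) = 0 \Rightarrow D$ has order prime to $7$), not a statement that $D$ detects a particular Galois-equivariant summand. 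Lemma \ref{lem: order multiple of 7} therefore only gives $\AJ_7(\Delta_{GS})\neq 0$; it does not tell you that the $\HH^1(C)$-component $\bar\alpha$ is nonzero, and your intended contradiction would collapse if the nonzero torsion class sat entirely in the $\HH^3(J)$-component. Establishing the factorization you want would need a genuinely new compatibility between the blow-up decomposition of Lemma \ref{lem: explicit V} and the specialization argument at the integral level, and that is not supplied by the cited inputs.

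The paper sidesteps this entirely. After pushing to $\Sym^3 C$, Lemma \ref{lem: explicit V} is used purely as a computational device to obtain the characteristic polynomial of Frobenius on $V$ from point counts on $C$. The paper then brings in a \emph{second} auxiliary prime, $p=11$ (distinct from $v=41$), verifies $\det(\mathrm{Fr}_{11}-1)\not\equiv 0\pmod 7$, and invokes the criterion of \cite[6.1]{Zelinsky} to conclude that the \emph{entire} group $\HH^1(\Gal_\Q, V)$ is torsion-free. This makes a nonzero torsion class impossible no matter which summand it lies in, avoiding the projection bookkeeping, the Kummer-theoretic translation, and the extra computation of $J(\Q)[7]$ that your route would require. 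That uniform torsion-freeness argument is the missing ingredient in your proposal.
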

\begin{proof}
    Suppose, for the sake of contradiction, that $\Delta_{GS,\infty}$ has finite order.
    Then its Abel-Jacobi image $\AJ_7(\Delta_{GS,\infty}) \in \HH^1(\Gal_{\Q}, \HH^3(C^3_{\bar \Q}, \Z_7))$ has finite order. 
    By Lemma \ref{lem: order multiple of 7} it is nonzero.  Let $h \colon C^3 \to \Sym^3(C)$ be the quotient map and let $\tilde{\Delta}_{GS} = h_*\Delta_{GS,\infty}$.
    Use the same notation as before for the Abel-Jacobi map $\AJ_\ell \colon \CH^2(\Sym^3(C))_{\mathrm{hom}} \to \HH^1(\Gal_\Q, V).$
    Since $h^*\tilde{\Delta}_{GS} = 6\Delta_{GS,\infty}$, we conclude that $\AJ_\ell(\tilde{\Delta}_{GS})$ is also nonzero and torsion.  On the other hand, we may use Magma to compute the characteristic polynomial of a Frobenius element acting on $\HH^1(C_{\Qbar}, \Z_7(1))$, and hence on $V$ as well (via Lemma \ref{lem: explicit V}). We check that the action of the geometric Frobenius $\mathrm{Fr}_{11}$ at $11$ on $V$
    satisfies
    \[\det(\mathrm{Fr}_{11} - 1) =29049104246323668435011663307177984 \not \equiv 0 \pmod{7}.\] 
    Thus $\HH^1(\Gal_\Q, V)$ is torsion-free by \cite[6.1]{Zelinsky}, which is a contradiction.
\end{proof}

    Combining Lemma \ref{lem:section is not 0} and Theorem \ref{thm: ceresa GS relation}, we have proven:

\begin{corollary}
    The Ceresa cycle $\kappa_{\infty}(C)$ of the curve $C\colon y^3 = x^4+x^2+1$ has infinite order.
\end{corollary}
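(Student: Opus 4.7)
The plan is to deduce this Corollary immediately from two preceding results in the excerpt. By Theorem \ref{thm: ceresa GS relation} (Shouwu Zhang's comparison), $\kappa(C)$ has infinite order in $\CH_1(J)$ if and only if the canonical Gross--Schoen modified diagonal cycle $\Delta_{GS}(C) \in \CH^2(C^3)$ has infinite order. For the curve $C \colon y^3 = x^4+x^2+1$, the preceding Lemma \ref{lem:section is not 0} asserts precisely that $\Delta_{GS}$, taken with base point the unique point at infinity (which is a $1/4$ canonical divisor), has infinite order. Chaining these two statements gives the claim in one line.

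So the entire proof consists of a single invocation: apply Theorem \ref{thm: ceresa GS relation} to transfer the nonvanishing from $\Delta_{GS}(C)$ to $\kappa(C)$. There is no additional work or obstacle; all of the content has already been carried out in Lemmas \ref{lem: order multiple of 7}, \ref{lem: explicit V}, and \ref{lem:section is not 0} above (which in turn combine the Buhler--Schoen--Top style reduction mod $41$ argument with a Frobenius-at-$11$ computation on the cohomology of $\Sym^3(C)$). Thus the proof reads simply: ``Combining Lemma \ref{lem:section is not 0} with Theorem \ref{thm: ceresa GS relation}, we conclude that $\kappa(C)$ has infinite order.''
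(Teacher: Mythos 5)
Your proposal is correct and is exactly the paper's argument: the paper states the corollary immediately after the sentence ``Combining Lemma \ref{lem:section is not 0} and Theorem \ref{thm: ceresa GS relation}, we have proven:''. Nothing further is needed, since all the substantive work is indeed contained in the preceding lemmas.
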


\section{Bielliptic Picard curves}\label{sec: bielliptic Picard}

We recall some results on bielliptic Picard curves from \cite{LagaShnidman}; we refer to that paper for more details.
In particular, we collect the results we will need on the endomorphism algebra of the Prym variety and its monodromy in \S\ref{subsec: galois action NS group}.

\subsection{Definitions}
A \define{bielliptic Picard curve} $C$ over a field $k$ (of characteristic $\neq 2,3$) is a smooth projective plane quartic curve over $k$ with affine model of the form
\begin{align}\label{equation: bielliptic picard body of text}
    y^3 = x^4+ax^2+b,
\end{align}
for some $a,b \in k$. We often write $C = C_{a,b}$. Smoothness of $C$ implies that 
\begin{align*}
 \Delta_{a,b} := 16b(a^2 - 4b)\neq 0,
\end{align*}
and any plane quartic of the form \eqref{equation: bielliptic picard body of text} with $\Delta_{a,b}\neq 0$ is smooth. 
These curves admit a $\mu_6$-action given by $\zeta\cdot (x,y) = (\zeta^3x,\zeta^4y)$ for every $\zeta\in \mu_6$. 
The unique fixed point of this action is the unique point $\infty$ at infinity. 
Since $\infty$ intersects the line at infinity with multiplicity $4$, the divisor $4\infty$ is canonical.

Define the \define{$j$-invariant} of $C_{a,b}$ to be $j(C_{a,b}) := (4b - a^2)/4b \in k^\times$.
\begin{lemma}\label{lemma: bielliptic isomorphisms}\cite[\S2.2-3]{LagaShnidman}
    Let $C_{a,b}$ and $C_{a',b'}$ be bielliptic Picard curves over $k$.  
    \begin{enumerate}
        \item $C_{a,b} \simeq C_{a',b'}$ if and only if there exists $\lambda \in k^\times$ such $(a',b') = (\lambda^6a, \lambda^{12}b)$. 
        \item $C_{a,b}$ is isomorphic to $C_{a',b'}$ over $\overline{k}$ if and only if $j(C_{a,b}) = j(C_{a',b'})$. 
    \end{enumerate}
\end{lemma}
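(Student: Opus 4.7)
The plan is to handle the ``if'' directions by an explicit substitution and to handle the ``only if'' directions by a coordinate analysis using the canonical embedding.

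First I would verify the ``if'' direction of (1): if $(a',b')=(\lambda^6 a,\lambda^{12} b)$ then the substitution $(x,y)\mapsto (\lambda^3 x, \lambda^4 y)$ sends $y^3 = x^4+a'x^2+b'$ to $\lambda^{12}(y^3 - x^4 - ax^2 - b)=0$ and extends to an isomorphism $C_{a',b'}\xrightarrow{\sim} C_{a,b}$ carrying $\infty$ to $\infty$. The formula $j(C_{a,b})=(4b-a^2)/(4b)$ is manifestly invariant under the scaling $(a,b)\mapsto(\lambda^6 a,\lambda^{12} b)$, so combined with (1) this yields the ``only if'' direction of (2). For the ``if'' direction of (2), I would rearrange $j(C_{a,b})=j(C_{a',b'})$ to $a^2 b' = a'^2 b$; since $b,b'\neq 0$, I can pick $\lambda\in\bar k^\times$ with $\lambda^{12}=b'/b$, and when $a\neq 0$ I adjust the choice of sixth-root so that $\lambda^6=a'/a$ (if $a=0$ then $a'=0$ automatically, so any such $\lambda$ works). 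Applying (1) over $\bar k$ completes the proof of (2).

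The main content is the ``only if'' direction of (1). Since $C_{a,b}$ is a non-hyperelliptic curve of genus $3$ with $\HH^0(C_{a,b},K) = \langle 1, x, y\rangle$ and canonical divisor $4\infty$, the given plane model realises the canonical embedding $C_{a,b}\hookrightarrow \P^2$, and any $k$-isomorphism $\phi\colon C_{a,b}\xrightarrow{\sim} C_{a',b'}$ extends uniquely to an element of $\PGL_3(k)$. The projective point $[0:1:0]=\infty$ is a hyperflex (the tangent line $Z=0$ meets the quartic with multiplicity four) and is the unique fixed point of the $\mu_6$-action on $C_{a,b}$. After possibly composing $\phi$ with an element of $\Aut(C_{a,b})\supset \mu_6$ one may arrange $\phi$ to be equivariant for the $\mu_6$-actions and hence to send $\infty\mapsto\infty$, so the induced projective automorphism fixes $\infty$ and the line $Z=0$, and is therefore affine of the form $(x,y)\mapsto(\alpha x+\beta,\gamma x+\delta y+\epsilon)$. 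Substituting into $y'^3 = x'^4 + a'x'^2 + b'$ and reducing modulo $y^3 = x^4+ax^2+b$, the vanishing of the $y$- and $y^2$-coefficients forces $\gamma=\epsilon=0$, the vanishing of the $x^3$-coefficient forces $\beta=0$, and matching the remaining coefficients yields $\delta^3=\alpha^4$ together with $a'=\delta^3\alpha^{-2}a$, $b'=\delta^3 b$. Setting $\lambda:=\delta/\alpha\in k^\times$, one checks $\alpha=\lambda^3$ and $\delta=\lambda^4$, so $(a',b') = (\lambda^6 a,\lambda^{12}b)$ as required.

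The main obstacle is the step that $\phi$ preserves $\infty$: this relies on controlling $\Aut(C_{a,b})$ well enough to know that every isomorphism is compatible (up to automorphisms of the source) with the distinguished $\mu_6$-structure. For generic $(a,b)$ this follows from uniqueness of the hyperflex; for the extra-symmetric specialisation $a=0$ one checks by hand that the larger automorphism group is still generated by maps of the form $(x,y)\mapsto(\lambda^3 x,\lambda^4 y)$, so no new isomorphisms can arise beyond those classified above. Once this is in place, the remainder is straightforward polynomial bookkeeping.
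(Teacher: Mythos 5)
The paper does not prove this lemma; it simply cites it from \cite[\S2.2--3]{LagaShnidman}, so there is no in-paper argument to compare against. Your overall strategy (explicit substitution for the ``if'' directions, canonical embedding and $\PGL_3$ analysis for the ``only if'' of (1), and $j$-invariant manipulation for (2)) is sound in outline, and the case where $\Aut(C_{a,b})=\mu_6$ is handled correctly. However, your treatment of the special locus $a=0$ contains two concrete errors, which leave a genuine gap.

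First, $\infty$ is \emph{not} the unique hyperflex on $C_{0,b}\colon y^3=x^4+b$: the tangent line at each of the three points $(0,y_0)$ with $y_0^3=b$ is $y=y_0$, and substituting into $y^3=x^4+ax^2+b$ yields $x^4+ax^2=0$, which for $a=0$ gives $x=0$ with multiplicity $4$. So $C_{0,b}$ has four hyperflexes, and an isomorphism $C_{0,b}\to C_{0,b'}$ could a priori send $\infty$ to any of them. Second, the claim that ``the larger automorphism group is still generated by maps of the form $(x,y)\mapsto(\lambda^3 x,\lambda^4 y)$'' is false: those maps only give $\mu_{12}$, whereas $\Aut(C_{0,1,\bar k})$ has order $48$ (it is an extension of $A_4$ by $\mu_4$, the $A_4$ arising from the equianharmonic configuration of the four hyperflex tangent lines in the pencil through their common point $[1:0:0]$). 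In particular, there exist automorphisms of $C_{0,b}$ over $\bar k$ that move $\infty$ to the other hyperflexes, and your composing-with-an-automorphism step is not available over a general ground field $k$. You acknowledge this as the ``main obstacle,'' but the proposed resolution does not hold. A salvageable route for $a=0$ is to note that the four hyperflex tangent lines are concurrent at $[1:0:0]\notin C$, so any $\PGL_3(k)$ isomorphism $C_{0,b}\to C_{0,b'}$ must fix $[1:0:0]$; one then has to analyze the resulting two-by-two block form directly rather than appeal to a false description of $\Aut(C_{0,b})$. You should also say explicitly, rather than only asserting, why $\infty$ is the unique hyperflex when $a\neq 0$ (the computation above shows $(0,y_0)$ is then not even a flex; ruling out other $\mu_6$-orbits of hyperflexes still requires an argument).
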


\begin{lemma}\label{lemma: j-invariant bielliptic picard curve field of definition}
    Let $C_{a,b}$ be a bielliptic Picard curve over $k^{\mathrm{sep}}$. Then $C$ is defined over $k$ (in other words, there exists a curve $X$ over $k$ such that $X_{k^{\mathrm{sep}}} \simeq C_{a,b}$) if and only if $j(C_{a,b}) \in k$.
\end{lemma}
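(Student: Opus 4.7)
The plan is to prove both implications using Lemma \ref{lemma: bielliptic isomorphisms}(2), which classifies bielliptic Picard curves up to $\bar k$-isomorphism by their $j$-invariant. The argument is essentially a Galois descent combined with an elementary normal-form construction.

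For the forward implication, I would fix a descent $X/k$ together with a $k^{\mathrm{sep}}$-isomorphism $\phi \colon X_{k^{\mathrm{sep}}} \xrightarrow{\sim} C_{a,b}$. For each $\sigma \in \Gal_k$, the composition $\phi^\sigma \circ \phi^{-1}$ is a $k^{\mathrm{sep}}$-isomorphism $C_{a,b} \xrightarrow{\sim} C_{a^\sigma,b^\sigma}$, so Lemma \ref{lemma: bielliptic isomorphisms}(2) forces $j(C_{a,b}) = j(C_{a^\sigma, b^\sigma}) = \sigma(j(C_{a,b}))$, placing $j(C_{a,b})$ in $k$.

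For the converse, I would construct an explicit bielliptic Picard curve $X = C_{a',b'}$ over $k$ with $j(X) = j(C_{a,b}) \in k^\times$; this can be done by treating the cases $j=1$ and $j\neq 1$ separately with a suitable one-parameter normal form (for instance $(a',b') = (0,1)$ when $j=1$ and $(a',b') = (1,\, 1/(4(1-j)))$ when $j \neq 1$), checking both that $\Delta_{a',b'}\neq 0$ and that $j(C_{a',b'}) = j$. Lemma \ref{lemma: bielliptic isomorphisms}(2) then provides an isomorphism $X_{\bar k} \simeq C_{a,b,\bar k}$, which by the content of that lemma is scaling by some $\lambda \in \bar k^\times$ with $\lambda^{12} = b'/b \in k^{\mathrm{sep}\times}$. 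The one step requiring a brief argument is that $\lambda$ itself lies in $k^{\mathrm{sep}}$: this follows from the separability of $x^{12} - b'/b$ over $k^{\mathrm{sep}}$ under the hypothesis $\mathrm{char}(k) \notin \{2,3\}$. The isomorphism is then defined over $k^{\mathrm{sep}}$, exhibiting $X$ as a $k$-descent of $C_{a,b}$.

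There is no substantive obstacle: the theorem follows essentially formally from Lemma \ref{lemma: bielliptic isomorphisms} by Galois descent, and the only technical point — that the scaling factor $\lambda$ from $\bar k$ can in fact be chosen in $k^{\mathrm{sep}}$ — is free from the standing characteristic hypothesis.
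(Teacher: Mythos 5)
Your proposal is correct and follows essentially the same route as the paper: the ``only if'' direction compares $C_{a,b}$ with its Galois conjugates $C_{\sigma(a),\sigma(b)}$ via Lemma \ref{lemma: bielliptic isomorphisms}(2), and the ``if'' direction produces an explicit model over $k$ with the same $j$-invariant (splitting into the cases $j=1$ and $j\neq 1$) and descends the isomorphism to $k^{\mathrm{sep}}$ using Lemma \ref{lemma: bielliptic isomorphisms}(1). Your extra remark that the scaling factor $\lambda$ lies in $k^{\mathrm{sep}}$ by separability of $x^{12}-b'/b$ is exactly the computation the paper leaves implicit.
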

\begin{proof}
    Suppose that $j = j(C_{a,b}) \in k$.
    If $j=1$, then $a=0$ and $C_{a,b}$ is $k^{\mathrm{sep}}$-isomorphic to $C_{0,1}$ and defined over $k$.
    If $j\neq 1$, then a computation using Lemma \ref{lemma: bielliptic isomorphisms}(1) shows that $C_{a,b}$ is $k^{\mathrm{sep}}$-isomorphic to $C_{2,(1-j)^{-1}}$ and hence also defined over $k$.
    Conversely, suppose that $C_{a,b}$ is defined over $k$.
    Then $C_{a,b}\simeq \sigma^*(C_{a,b}) =  C_{\sigma(a),\sigma(b)}$ for all $\sigma \in \Gal_k$.
    By Lemma \ref{lemma: bielliptic isomorphisms}(2), $j(C_{a,b}) = \sigma(j(C_{a,b}))$ for all $\sigma\in \Gal_k$. In other words, $j(C_{a,b}) \in k$.
\end{proof}

\subsection{The Prym variety}\label{subsec: prym varieties}
The curve $C$ admits an involution $\tau(x,y)=(-x,y)$, and the quotient by this involution is a double cover $\pi \colon C \to E$ to an elliptic curve $E$ with origin the image of $\infty$.
This elliptic curve has equation $y^3 = x^2+ax+b$ and (after a change of variables) short Weierstrass model $E \colon y^2 = x^3 + 16(a^2 - 4b)$. 
This double cover decomposes the Jacobian variety $J$ of $C$.
Let
\begin{align}\label{equation: definition prym variety}
    P := \ker(1+\tau^* \colon J\rightarrow J)
\end{align}
be the \define{Prym variety} associated to the double cover $C\rightarrow E$.
\begin{lemma}\cite[\S2.5]{LagaShnidman}\label{lemma: J isogenous to P times E}
    $P$ is an abelian surface and $J$ is isogenous to $P\times E$.
\end{lemma}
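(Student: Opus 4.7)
The plan is to invoke the standard Prym construction for double covers of curves. First, I would verify by Riemann--Hurwitz that $E = C/\langle \tau \rangle$ has genus $1$: the fixed points of $\tau$ on $C$ are the three points with $x = 0$ (i.e.\ $y^3 = b$, which are distinct since $b \neq 0$) together with $\infty$, giving $4$ branch points for the degree $2$ map $\pi \colon C \to E$; the formula $2g(C) - 2 = 2(2g(E) - 2) + 4$ with $g(C) = 3$ then forces $g(E) = 1$, consistent with the explicit Weierstrass model. This sets up $\pi^* \colon E \to J$ and the norm $\pi_* \colon J \to E$, which satisfy the standard identities $\pi_* \pi^* = [2]_E$ and $\pi^* \pi_* = 1 + \tau^*$ on $J$.

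From $\pi_* \pi^* = [2]$ it follows that $\pi^*$ has finite kernel (contained in $E[2]$) and $\pi_*$ is surjective. Using $\pi^* \pi_* = 1 + \tau^*$, we have $P = \ker(1 + \tau^*) = \ker(\pi^* \pi_*)$, and this fits in an exact sequence $0 \to \ker \pi_* \to P \to \ker \pi^* $, so $P$ and $\ker \pi_*$ have the same dimension. Since $\pi_*$ is surjective onto $E$, we get $\dim P = \dim J - \dim E = 3 - 1 = 2$. Replacing $P$ by its identity component if necessary (or appealing to the analysis in \cite{LagaShnidman}), $P$ is an abelian surface.

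To produce the isogeny, I would consider the addition map
\[
\psi \colon E \times P \longrightarrow J, \qquad (e, p) \longmapsto \pi^*(e) + p.
\]
Both source and target have dimension $3$, so it suffices to show that $\ker \psi$ is finite. If $\pi^*(e) + p = 0$, then $p = -\pi^*(e)$; applying $1 + \tau^*$ (and using $\tau^* \pi^* = \pi^*$) gives $0 = (1 + \tau^*)p = -(1+\tau^*)\pi^*(e) = -2\pi^*(e)$, so $\pi^*(e) \in J[2]$. Thus $\ker \psi$ is contained in $\pi^{*-1}(J[2]) \times J[2]$, which is finite. Hence $\psi$ is an isogeny, proving $J \sim P \times E$.

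There is no real obstacle here; the only mild subtlety is ensuring that $P$, defined as a scheme-theoretic kernel, is indeed an abelian variety in the expected sense, which is handled by passing to the identity component or citing the detailed analysis in \cite{LagaShnidman}.
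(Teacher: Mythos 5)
Your proof is correct and is the standard Prym-variety argument; the paper itself offers no proof here but simply cites \cite[\S2.5]{LagaShnidman}, where essentially this same decomposition ($\pi_*\pi^*=[2]$, $\pi^*\pi_*=1+\tau^*$, addition map $E\times P\to J$ with finite kernel) is carried out. The only point your sketch leaves open is that $P=\ker(1+\tau^*)$ is itself connected and smooth, rather than merely having an abelian surface as identity component: smoothness follows since $\mathrm{char}\,k\neq 2$ makes the tangent space at the origin the $(-1)$-eigenspace of $\tau^*$ on $\mathrm{Lie}(J)$, which is $2$-dimensional, and connectedness follows because the double cover $C\to E$ is ramified, so $\ker\pi^*=0$, your exact sequence gives $P=\ker\pi_*$, and the kernel of the norm map of a \emph{ramified} double cover is connected (Mumford). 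You flag this and defer to \cite{LagaShnidman}, which is acceptable since that is exactly what the paper does; with the remark above the argument is complete on its own.
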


\subsection{Galois action on the N\'eron--Severi group}\label{subsec: galois action NS group}
The main ingredient needed from \cite{LagaShnidman} for our proof of Theorem \ref{thm:main} is an explicit description of the $\Gal_k$-action on the N\'eron--Severi group $\NS(P_{k^{\sep}})$, as well as the fact that $P_{k^{\sep}}$ has quaternionic multiplication. 

We will need a universal version of this story over the parameter space of all bielliptic Picard curves, so let
\[S := \A^2_{\Z[1/6]}\setminus \{\Delta_{a,b} = 0\} = \Spec(\Z[1/6,a,b,\Delta_{a,b}^{-1}]).\] 
Let $\mathcal{C} \rightarrow S$ be the universal bielliptic Picard curve with equation \eqref{equation: bielliptic picard body of text}.
The morphism $\mathcal{C}\rightarrow S$ is smooth, proper and of relative dimension $1$ and every bielliptic Picard curve occurs as a fiber of this morphism.
Let $\mathcal{J} = \mathbf{Pic}_{\mathcal{C}/S}^0$ be its relative Jacobian \cite[\S9.4, Proposition 4]{BLR-neronmodels}, an abelian scheme over $S$.
The definition \eqref{equation: definition prym variety} of the Prym variety works in families and we obtain an abelian scheme $\mathcal{P}\rightarrow S$ of relative dimension $2$ whose fibers are the Prym varieties described in \S\ref{subsec: prym varieties}.

We need to pass to a cover of $S$ to "see" all endomorphisms of $\mathcal{P}$.
Let $\tilde{S}\rightarrow S$ be the finite \'etale cover given by adjoining a primitive third root of unity $\omega$ and a sixth root of $\Delta_{a,b}$.
More formally, consider the ring extension 
\[R = \Z[1/6,a,b,\Delta_{a,b}^{-1}] \rightarrow \tilde{R} = \Z[\omega][1/6,a,b, \Delta_{a,b}^{-1}, \varepsilon]/(\varepsilon^6  -\Delta_{a,b})\] 
and define $\tilde{S}\rightarrow S$ to be the induced morphism of schemes.
This morphism is finite, \'etale, and Galois, with Galois group $G := \Aut(\tilde{S}/S)$ a dihedral group of order $12$.
The group $G$ naturally acts on $\End^0(\mathcal{P}_{\tilde{S}})$.
Let $\mathrm{triv}\colon G\rightarrow \GL_1(\Q)$ be the trivial representation and let $\mathrm{std}\colon G\rightarrow \GL_2(\Q)$ be a model for the reflection representation, i.e.\ the unique irreducible $2$-dimensional representation of $G$ defined over $\Q$.
See \S\ref{subsec: notations and conventions} for our notations concerning abelian schemes. 
\begin{theorem}\label{thm: Neron Severi action}
    The endomorphism algebra $B:=\End^0(\mathcal{P}_{\tilde{S}})$ is a quaternion algebra of discriminant $6$ over $\Q$. 
    Moreover, the $G$-representation $\Hom^{\sym}(\mathcal{P}_{\tilde{S}}, \mathcal{P}^{\vee}_{\tilde{S}}) \otimes \Q$ is three-dimensional and isomorphic to $\mathrm{triv} \oplus \mathrm{std}$. 
\end{theorem}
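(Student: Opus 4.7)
The plan is to combine the geometric-fiber analysis of \cite{LagaShnidman} with an explicit global construction of endomorphisms over $\tilde S$, then identify the resulting $G$-module structure by a character computation.

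First, I would establish the quaternionic structure. At a geometric point $\bar s$ of $\tilde S$, one invokes \cite{LagaShnidman} to know that $\End^0(\mathcal{P}_{\bar s})$ is the unique quaternion algebra over $\Q$ of discriminant $6$. The essential generators are $\omega \in \End(\mathcal{P})$, arising from the $\mu_3$-quotient of the $\mu_6$-action on $\mathcal{C}$ (and satisfying $\omega^2 + \omega + 1 = 0$), together with a second endomorphism $\mu$ constructed using the adjoined sixth root $\varepsilon$ of $\Delta_{a,b}$. Both $\omega$ and $\mu$ are defined globally over $\tilde S$, so the chain of inclusions of $\Q$-algebras $\Q\langle \omega, \mu \rangle \subset \End^0(\mathcal{P}_{\tilde S}) \subset \End^0(\mathcal{P}_{\bar s})$ forces all three to coincide, since the outer two are already $4$-dimensional.

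Second, identify $\Hom^{\sym}(\mathcal{P}_{\tilde S}, \mathcal{P}^\vee_{\tilde S}) \otimes \Q$ with the Rosati-symmetric subspace $B^+ \subset B$ via $\phi \mapsto \lambda_0 \circ \phi$, where $\lambda_0$ is the canonical polarization on $\mathcal{P}$ inherited from the theta polarization on the relative Jacobian $\mathcal{J}$. Since $\lambda_0$ is already defined over $S$, this map is $G$-equivariant; and since the Rosati involution on a quaternion division algebra over $\Q$ is its unique positive involution, $B^+$ is $3$-dimensional, confirming the dimension count.

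Finally, I would decompose $B^+$ as a $G$-module. The invariant line $\Q \cdot \lambda_0$ corresponds to $\Q \cdot 1 \subset B^+$, giving the $\mathrm{triv}$ summand. For the $2$-dimensional complement, the $G$-action is computed from the explicit $G$-action on $\omega, \mu \in B$: a generator of the cyclic subgroup $\Z/6 \subset G \cong D_{12}$ fixes $\omega$ and scales $\varepsilon$ by $\zeta_6$, while the reflection sends $\omega \mapsto \omega^{-1}$ and $\varepsilon \mapsto \varepsilon^{-1}$. Reading off the traces of these generators on the complement and matching with the character table of $D_{12}$---which has exactly two rational $2$-dimensional irreducibles---identifies it as $\mathrm{std}$. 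The main obstacle lies in this final step: pinning down precisely how $\mu$ transforms under $G$ (a computation sensitive to the exact way $\mu$ is built from $\varepsilon$), and then verifying that the resulting character matches the standard reflection representation rather than the other rational $2$-dimensional irreducible of $D_{12}$.
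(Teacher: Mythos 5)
Your architecture (a sandwich $\Q\langle\omega,\mu\rangle \subset \End^0(\mathcal{P}_{\tilde S}) \subset \End^0(\mathcal{P}_{\bar s})$, then transfer to $\Hom^{\sym}(\mathcal{P}_{\tilde S},\mathcal{P}^{\vee}_{\tilde S})\otimes\Q$ via a polarization and a character computation) is reasonable in outline, but it has two genuine gaps, and they sit exactly where the content of the theorem lies. First, the existence of the second endomorphism $\mu$ \emph{over $\tilde S$} is asserted, not proved: that the full quaternionic multiplication is already defined after adjoining only $\omega$ and a sixth root of $\Delta_{a,b}$ is precisely the nontrivial input, which the paper imports from \cite[Corollary 6.6]{LagaShnidman} after reducing to the generic fibre ($\End(\mathcal{P}_{\tilde S})\simeq \End(P_{\widetilde K})$ by \cite[\S2, Lemma 1]{Faltings-finitenesstheorems}, and $G\simeq \Gal(\widetilde K/K)$). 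Citing \cite{LagaShnidman} only for a geometric fibre also needs care: at special points the fibre algebra can be strictly larger than $B$ (e.g.\ $(a,b)=(0,1)$, where the Prym acquires CM), so the upper bound in your sandwich must be taken at the geometric generic point, as in \cite[Lemmas 6.2, 6.16]{LagaShnidman}. Second, you explicitly leave open the computation identifying the $2$-dimensional complement of the invariant line with $\mathrm{std}$ rather than the other rational $2$-dimensional irreducible of $G\simeq D_6$; this is the second half of the theorem, and it cannot be carried out without an actual construction of $\mu$ and of its $G$-transformation law (in the paper this is \cite[Corollary 6.8]{LagaShnidman}). So as written the proposal reduces the theorem to exactly the statements it was supposed to establish.

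A smaller point: your justification for $\dim B^+=3$ is incorrect as stated. The discriminant $6$ quaternion algebra is \emph{indefinite}; its canonical involution is not positive, and positive involutions on it are not unique. What is true (Albert Type II) is that every positive involution on such an algebra is of orthogonal type, hence has $3$-dimensional symmetric part, so the dimension count survives, but not for the reason you give. Also, the identification $\phi\mapsto\lambda_0^{-1}\circ\phi$ (not $\lambda_0\circ\phi$) is the one landing in $\End^0$, and its $G$-equivariance does use that the induced polarization on $\mathcal{P}$ is defined over $S$, which you correctly note.
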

\begin{proof}
    The schemes $S$ and $\tilde{S}$ are integral and normal; let $K$ and $\widetilde{K}$ denote their function fields and let $P = \mathcal{P}_K$ denote the generic fiber of $\mathcal{P}$.
    By \cite[\S2, Lemma 1]{Faltings-finitenesstheorems}, the natural map $\End(\mathcal{P}_{\tilde{S}}) \rightarrow \End(P_{\widetilde{K}})$ is an isomorphism. By \cite[Tag \href{https://stacks.math.columbia.edu/tag/0BQM}{0BQM}]{stacksproject}, the natural map $G = \Aut(\tilde{S}/S) \rightarrow \Gal(\widetilde{K}/K)$ is also an isomorphism.
    It therefore suffices to prove all claims over the generic point $\Spec(K)$. 
    By \cite[Lemma 6.16]{LagaShnidman}, $P$ is geometrically simple.
    If $\bar{K}$ denotes an algebraic closure of $K$ then \cite[Lemma 6.2]{LagaShnidman} implies that $\End^0(P_{\bar{K}})$ is a discriminant $6$ quaternion algebra, and \cite[Corollary 6.6]{LagaShnidman} shows that $\End^0(P_{\bar{K}}) = \End^0(P_{\tilde{K}})$.
    The claim concerning $\Hom^{\sym}(\mathcal{P}_{\tilde{S}}, \mathcal{P}^{\vee}_{\tilde{S}}) \otimes \Q = \Hom^{\sym}(P_{\tilde{K}}, P^{\vee}_{\tilde{K}})\otimes \Q$ follows from \cite[Corollary 6.8]{LagaShnidman} and the fact that the latter is isomorphic to $\NS(P_{k^{\mathrm{sep}}})\otimes \Q$.
\end{proof}
The $\mu_3$-action on $\mathcal{J}$ restricts to a $\mu_3$-action on $\mathcal{P}$, which by functoriality induces a $\mu_3$-action on $\Hom^{\sym}(\mathcal{P}_{\tilde{S}}, \mathcal{P}_{\tilde{S}}^{\vee})$ via $\omega\cdot \lambda =  \omega^{\vee} \circ\lambda \circ \omega^{-1}$ for all $\lambda \in \Hom^{\sym}(\mathcal{P}_{\tilde{S}}, \mathcal{P}_{\tilde{S}}^{\vee})\otimes \Q$ and $\omega \in \mu_3$.
This $\mu_3$-action commutes with the $G$-action, so preserves the isotypic components $\mathrm{triv}$ and $\mathrm{std}$.
\begin{lemma}\label{lemma: description mu3-action on NS(P)}
    In the decomposition $\Hom^{\sym}(\mathcal{P}_{\tilde{S}}, \mathcal{P}^{\vee}_{\tilde{S}}) \otimes \Q = \mathrm{triv} \oplus \mathrm{std}$ of Theorem \ref{thm: Neron Severi action}, $\mu_3$ acts trivially on $\mathrm{triv}$ and $\mu_3$ acts on $\mathrm{std}\otimes \Qbar$ as a direct sum of the two distinct nontrivial characters.
\end{lemma}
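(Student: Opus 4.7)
The plan is to transfer the $\mu_3$-action on $\Hom^{\sym}(\mathcal{P}_{\tilde{S}}, \mathcal{P}^{\vee}_{\tilde{S}}) \otimes \Q$ to an inner conjugation action on the quaternion algebra $B = \End^0(\mathcal{P}_{\tilde{S}})$, and then carry out a short eigenvalue computation. First, I would fix a $\mu_3$- and $G$-invariant polarization $\lambda_0 \colon \mathcal{P} \to \mathcal{P}^{\vee}$; one such choice is the restriction to $\mathcal{P}$ of the canonical principal polarization on the relative Jacobian $\mathcal{J}$, since the $\mu_3$- and $G$-actions both arise from structure that preserves this canonical polarization (automorphisms of the curve $\mathcal{C}$ and descent data, respectively). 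The assignment $b \mapsto \lambda_0 \circ b$ then identifies the Rosati-symmetric subspace $B^{\sym} := \{b \in B \colon b^{\dagger} = b\}$ (where $\dagger$ denotes the Rosati involution with respect to $\lambda_0$) with $\Hom^{\sym}(\mathcal{P}_{\tilde{S}}, \mathcal{P}_{\tilde{S}}^{\vee}) \otimes \Q$. Under this identification, the $\mu_3$-action translates to inner conjugation $b \mapsto \omega b \omega^{-1}$, where $\omega \in \mu_3 \subset B^{\times}$; this uses only that $\lambda_0$ is $\mu_3$-invariant, from which one deduces the relation $\omega^{\vee} \lambda_0 = \lambda_0 \omega^{-1}$.

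Next I would analyze the conjugation action of $\omega$ on $B^{\sym}$. Because $\omega$ preserves $\lambda_0$, the Rosati involution sends $\omega$ to $\omega^{-1}$; in particular $\omega \notin B^{\sym}$, and therefore $\Q(\omega) \cap B^{\sym} = \Q \cdot 1$. Since $B$ is a quaternion algebra with center $\Q$ and $\omega \notin \Q$, the centralizer of $\omega$ in $B$ equals the quadratic field $\Q(\omega)$. Passing to $\Qbar$, $B \otimes \Qbar \simeq M_2(\Qbar)$ and $\omega$ is conjugate to $\mathrm{diag}(\omega, \omega^{-1})$; a direct matrix computation shows that conjugation by $\omega$ has eigenvalues $1, 1, \omega, \omega^{-1}$ on $B \otimes \Qbar$, with the $1$-eigenspace equal to $\Q(\omega) \otimes \Qbar$.

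Finally, I would intersect with the $3$-dimensional subspace $B^{\sym} \otimes \Qbar$. Its $1$-eigenspace under conjugation equals $(\Q(\omega) \cap B^{\sym}) \otimes \Qbar = \Q \cdot 1 \otimes \Qbar$, which is one-dimensional; the remaining $2$-dimensional complement must carry the other two eigenvalues $\omega, \omega^{-1}$, each with multiplicity one, since the $\omega$- and $\omega^{-1}$-eigenspaces of the ambient $B \otimes \Qbar$ are each one-dimensional and the $2$-dimensional $\Q$-rational complement is Galois-stable (so contains both). Because the $\mu_3$- and $G$-actions commute, the $\mu_3$-eigenspaces are $G$-stable; the $1$-eigenspace $\Q \cdot [\lambda_0]$ is therefore a $G$-invariant line and hence must coincide with $\mathrm{triv}$. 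The complementary $\mathrm{std} \otimes \Qbar$ then carries the two distinct nontrivial characters of $\mu_3$, as claimed.

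The main technical point to watch is the first paragraph: the interplay of duality, the Rosati involution, and the $\mu_3$-action (in particular the identity $\omega^{\vee} \lambda_0 = \lambda_0 \omega^{-1}$ and the resulting conversion to $b \mapsto \omega b \omega^{-1}$) is sign- and inverse-sensitive but routine. All remaining steps reduce to linear algebra inside the quaternion algebra $B$.
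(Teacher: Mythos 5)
Your argument is correct, and its computational core coincides with the paper's: everything reduces to the action of conjugation by $\omega$ on the three-dimensional Rosati-symmetric part of the quaternion algebra $B=\End^0(\mathcal{P}_{\tilde{S}})$, whose fixed line is spanned by (the class of) the polarization and whose two-dimensional complement carries the two nontrivial characters. The route is genuinely different, though: the paper simply quotes the explicit description from \cite[Cor.\ 6.8]{LagaShnidman} (a basis $1,i,j,ij$ with $i^2=-3$, $j^2=2$, $\omega=(-1+i)/2$, the identification of $\Hom^{\sym}(\mathcal{P}_{\tilde{S}},\mathcal{P}^{\vee}_{\tilde{S}})\otimes\Q$ with $\mathrm{span}\{1,j,ij\}$, $\mathrm{triv}=\Q\cdot 1$, and the statement that $\mu_3$ acts by conjugation) and then calculates in that basis, whereas you re-derive the conjugation description intrinsically from a $\mu_3$- and $G$-invariant polarization (the restriction of the theta polarization of $\mathcal{J}$) together with the Rosati involution, and you pin down $\mathrm{triv}$ as the $\mu_3$-fixed line using that it is $G$-stable and that $\mathrm{std}$ is irreducible of dimension $2$ (in fact $\lambda_0$ itself is $G$-fixed, being defined over $S$, which gives this even more directly). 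Your version is self-contained and basis-free at the cost of verifying the polarization claims; both are valid. One bookkeeping remark: the action as literally displayed in the paper, $\omega\cdot\lambda=\omega^{\vee}\circ\lambda\circ\omega^{-1}$, would translate under $b\mapsto\lambda_0\circ b$ into $b\mapsto\omega^{-1}b\omega^{-1}$, which does not preserve the symmetric subspace nor fix $\lambda_0$; the intended action is the natural one $\lambda\mapsto(\omega^{-1})^{\vee}\circ\lambda\circ\omega^{-1}$, which is precisely the conjugation $b\mapsto\omega b\omega^{-1}$ you obtain and which the paper's own proof uses — so your sign and inverse conventions match the proof rather than the displayed formula, and in any case the eigenvalue conclusion is unaffected by replacing $\omega$ with $\omega^{-1}$.
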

\begin{proof}
    The proof of \cite[Corollary 6.8]{LagaShnidman} shows that if we write $B = \text{span}\{1,i,j,ij\}$ with $i^2=-3$, $j^2=2$, $ij=-ji$ and $\omega = (-1+i)/2$, then $\Hom^{\sym}(\mathcal{P}_{\tilde{S}}, \mathcal{P}^{\vee}_{\tilde{S}}) \otimes \Q \simeq \text{span}\{1,j,ij\}$, $\mathrm{triv}$ is spanned by $1$, $\mathrm{std}$ is spanned by $\{i,ij\}$ and $\omega \in \mu_3$ acts via conjugation by $\omega\in B$.
    The description of the $\mu_3$-action on $\mathrm{triv}$ and $\mathrm{std}$ now follow from an explicit calculation.
\end{proof}

\section{Background on Chow motives}\label{section: background chow motives}

Our proof of Theorem \ref{thm:main} uses the language of relative Chow motives, so we recall the necessary definitions and prove some basic statements on motives for which we could not find a reference.
The most important sections are \S\ref{subsec: relative chow motives} and \S\ref{subsection: motives of abelian schemes} and the others can be referred back to when needed in \S\ref{sec: motivic computations and proof of main theorem}.

We call a scheme $S$ \define{adequate} if it is smooth and quasiprojective over a Dedekind domain.
For the remainder of this section fix such a scheme $S$.

\subsection{Correspondences}
We denote by $\mathsf{SmProj}(S)$ the category of smooth projective $S$-schemes.
For $X$ in $\mathsf{SmProj}(S)$ the Chow groups $\CH^p(X)$ are well defined, see \S\ref{subsec: notations and conventions}. For the remainder of the paper write $\CH^p(X; \Q) := \CH^p(X)\otimes_{\Z} \Q$ and if $X\rightarrow S$ has equidimensional fibers write $d(X/S)$ for the relative dimension of $X\rightarrow S$.
If $X$ and $Y$ are in $\mathsf{SmProj}(S)$ we define the $\Q$-vector space of \define{correspondences} by $\Corr(X,Y) := \CH^{*}(X\times_S Y;\Q)$.
We say a correspondence has \define{degree $r$} if it lies in the subspace
\[
\Corr^r(X,Y) := \bigoplus_{i} \CH^{r+d(X_i/S)}(X_i\times_S Y;\Q)
\]
where $X = \sqcup_i X_i$ is a decomposition such that $X_i\rightarrow S$ is equidimensional.
If $p\in \Corr^r(X, Y)$ and $q\in \Corr^s(Y,Z)$ are correspondences, denote by $q\circ p = \pi_{XZ,*}(\pi_{XY}^*p \cdot \pi_{YZ}^* q) \in \Corr^{r+s}(X,Z)$ their composition.
For an $S$-morphism $f\colon X\rightarrow Y$, let $\Gamma_f \subset X\times_S Y$ be its graph and let $^t\Gamma_f\subset Y\times_S X$ be the transpose.
Write $\Delta_{X/S}$ for the graph of the identity $X\rightarrow X$.

\subsection{Relative Chow motives}\label{subsec: relative chow motives}

We denote by $\mathsf{Mot}(S)$ the category of relative Chow motives with respect to graded correspondences over $S$, see \cite[\S1]{DeningerMurre} for more details.
The results of \cite{DeningerMurre} are stated under the standing assumption that $S$ is smooth and quasi-projective over a field, but their results continue to hold in our more general set-up since intersection theory is developed in this generality, see \cite[\S1, Remark 1.1]{Kunnemann-arakelovchowgroups}.

We recall that an object of $\mathsf{Mot}(S)$ is a triple $(X/S, p, m)$, where $X/S$ is a smooth projective $S$-scheme, $p\in \Corr^{0}(X,X)$ is an idempotent correspondence and $m$ is an integer. 
Morphisms are given by
\[
\Hom((X/S,p,m),(Y/S,q,n)) := q\circ \Corr^{m-n}(X, Y)\circ p.
\]
Given a motive $M = (X/S,p,m)$ and $n\in \Z$ we write $M(n) := (X/S,p,m+n)$.
There is a contravariant functor $\mathsf{SmProj}(S) \rightarrow \mathsf{Mot}(S)$ sending $X/S$ to $\frak{h}(X/S) := (X/S, [\Delta_{X/S}], 0)$ and sending morphisms $f\colon X\rightarrow Y$ to $[^t\Gamma_f]$. 
Write $\mathbf{1}= \mathbf{1}_S := \frak{h}(S/S)$ for the unit motive and $\mathbb{L} := \mathbf{1}(-1)$ for the Lefschetz motive.
There is a notion of direct sums and tensor products in $\mathsf{Mot}(S)$.

\subsection{Base changing motives}\label{subsec: base changing motives}
Suppose that $T$ is another adequate scheme and $f\colon T\rightarrow S$ a morphism.
There are two situations in which we have a good theory of pullbacks\footnote{Pullbacks can be defined in greater generality, see for example \cite[Example 20.1.2]{Fulton-intersectiontheory}, but we restrict ourselves to the concrete cases we need in this paper.} \cite[\S20.1]{Fulton-intersectiontheory}:
\begin{enumerate}
    \item If $f$ is flat, there are pullback maps $f^*\colon \CH^p(X) \rightarrow \CH^p(X_T)$ for every $X/S$ in $\mathsf{SmProj}(S)$ and $p\in \Z_{\geq 0}$.
    \item If $f$ is a regular embedding, there are Gysin homomorphisms $f^{*}\colon \CH^p(X) \rightarrow \CH^p(X_T)$ for every $X/S$ in $\mathsf{SmProj}(S)$ compatible with flat pullbacks.
\end{enumerate}
We will say that $f$ \define{admits pullbacks} if it is a finite composition $f_1\circ \cdots \circ f_n$ of morphisms of the above form, and for such a morphism we define $f^* = f_n^*\circ \cdots \circ f_1^*\colon \CH^p(X)\rightarrow \CH^p(X_T)$ for every $X/S$ in $\mathsf{SmProj}(S)$. (This does not depend on the choice of $f_i$'s.)
For example, if $T$ is the spectrum of a field then $f$ admits pullbacks.
When $f$ admits pullbacks we define a functor 
\begin{align*}
    f^*\colon \mathsf{Mot}(S)\rightarrow \mathsf{Mot}(T),
\end{align*}
which on objects sends $(X/S,p,m)$ to $(X_T/T, f^*(p), m)$ and sends a correspondence $q$ to $f^*(q)$.

\subsection{Descending direct summands of motives}\label{subsec: descending direct summands of motives}
Let $\pi\colon \tilde{S}\rightarrow S$ be a finite \'etale morphism of adequate schemes.
Suppose additionally that it is Galois and write $G$ for the group of Deck transformations, so we have an isomorphism of $S$-schemes $g\colon \tilde{S} \rightarrow \tilde{S}$ for every $g\in G$.
Using the pullback functor from \S\ref{subsec: base changing motives}, we obtain functors $g^* \colon \mathsf{Mot}(\tilde{S}) \rightarrow \mathsf{Mot}(\tilde{S})$. 
Let $M,N$ be objects of $\mathsf{Mot}(S)$ and write $\tilde{M} = \pi^*M$, $\tilde{N} = \pi^*N$ for the corresponding objects of $\mathsf{Mot}(\tilde{S})$.
Then we have canonical identifications $\tilde{M} = g^*\tilde{M}$ and $\tilde{N} = g^*\tilde{N}$ for all $g\in G$.
It follows by functoriality that $\Hom(\tilde{M},\tilde{N})$ inherits a $G$-action.

\begin{lemma}\label{lemma: descending morphisms of motives}
    In the above notation, the natural map $\Hom(M,N) \rightarrow \Hom(\tilde{M},\tilde{N})^G$ is an isomorphism. 
\end{lemma}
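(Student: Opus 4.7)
The plan is to reduce the statement to a Galois descent fact for rational Chow groups under the cover $\pi$, and then to build an explicit inverse to $\pi^*$ from the proper pushforward $\pi_*$.

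Write $M = (X/S,p,m)$ and $N = (Y/S,q,n)$, so that $\tilde M = (X_{\tilde S}/\tilde S, \pi^*p, m)$ and $\tilde N = (Y_{\tilde S}/\tilde S, \pi^*q, n)$. Because $\pi^*p$ and $\pi^*q$ are $G$-invariant, the $G$-action on $\Hom(\tilde M, \tilde N) \subseteq \CH^*((X\times_S Y)_{\tilde S}; \Q)$ coincides with the restriction of the natural $G$-action on this ambient Chow group. The natural map of the lemma is then the restriction of the pullback $\pi^*\colon \CH^*(X\times_S Y; \Q) \to \CH^*((X\times_S Y)_{\tilde S}; \Q)$. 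Since $\pi^*$ is a ring homomorphism with respect to composition of correspondences and intertwines $p,q$ with $\pi^*p, \pi^*q$, it suffices to prove the following: for every $Z$ in $\mathsf{SmProj}(S)$, the pullback
\[
\pi_Z^*\colon \CH^*(Z;\Q) \longrightarrow \CH^*(Z_{\tilde S};\Q)^G
\]
is an isomorphism. (One then applies this with $Z = X\times_S Y$.)

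For this reduced statement, set $n = |G|$. The base-changed morphism $\pi_Z \colon Z_{\tilde S} \to Z$ is finite étale of degree $n$, so the projection formula yields $\pi_{Z,*}\circ \pi_Z^* = n\cdot \id$ on $\CH^*(Z;\Q)$. On the other hand, since $\pi$ is Galois, the fibre square $\tilde S\times_S \tilde S \cong \bigsqcup_{g\in G}\tilde S$ (the $g$-component being the graph of $g$) pulls back along $Z\to S$ to the decomposition $Z_{\tilde S}\times_Z Z_{\tilde S} \cong \bigsqcup_{g\in G}Z_{\tilde S}$, and flat base change for Chow groups \cite{Fulton-intersectiontheory} gives $\pi_Z^*\circ \pi_{Z,*} = \sum_{g\in G} g^*$ on $\CH^*(Z_{\tilde S};\Q)$. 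Defining $\varphi := \tfrac{1}{n}\pi_{Z,*}$, the first identity yields $\varphi\circ \pi_Z^* = \id$, so $\pi_Z^*$ is injective, while for any $G$-invariant $\beta\in \CH^*(Z_{\tilde S};\Q)^G$ the second gives $\pi_Z^*\varphi(\beta) = \tfrac{1}{n}\sum_g g^*\beta = \beta$, proving surjectivity onto the $G$-invariants.

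The main technical obstacle is verifying the two projection-type identities in the required generality, namely on Chow groups of smooth projective schemes over an adequate base (i.e., smooth quasi-projective over a Dedekind domain) rather than over a field. Both identities, however, follow formally from flat base change and the projection formula for proper pushforward, which are developed in this generality in \cite{Fulton-intersectiontheory}; passing to $\Q$-coefficients then allows inversion of $n$ and yields the averaged retraction $\varphi$. With the Chow-group statement in hand, the compatibility of $\pi^*$ with both composition of correspondences and the idempotents $p,q$ translates the isomorphism $\CH^*(Z;\Q) \cong \CH^*(Z_{\tilde S};\Q)^G$ into the desired isomorphism $\Hom(M,N) \cong \Hom(\tilde M,\tilde N)^G$.
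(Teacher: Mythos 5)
Your proof is correct and follows essentially the same route as the paper: the inverse you construct, $\tfrac{1}{|G|}\pi_{Z,*}$, is exactly the paper's map sending a $G$-invariant class to $\tfrac{1}{|G|}$ times the descended cycle representing $\sum_{g\in G}g^*Z$, you just package the averaging via the standard identities $\pi_*\pi^*=|G|$ and $\pi^*\pi_*=\sum_g g^*$ instead of choosing cycle representatives. If anything, your formulation is slightly cleaner, since working directly with $\pi_*$ on Chow groups makes well-definedness automatic and your explicit reduction through the idempotents $p,q$ spells out a step the paper leaves implicit.
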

\begin{proof}
    This is well known when $S = \Spec(k)$ \cite[Lemma 1.17]{Scholl-classicalmotives}; we sketch the generalization of the classical proof to the relative setting. 
    Write $M = (X/S, p, m)$ and $N = (Y/S,q,n)$. 
    Then an element of $\Hom(\tilde{M},\tilde{N})^G$ is the same as a $G$-invariant element $\alpha$ of $q\circ \CH^{d(X/S)+m-n}(X\times_S Y;\Q)\circ p$.
    Choose a cycle $Z$ on $X_{\tilde{S}} \times_{\tilde{S}} Y_{\tilde{S}}$ representing $\alpha$.
    Then $\sum_{g\in G} g^*Z$ descends to a unique cycle $W$ of $X\times_S Y$. 
    Moreover the rational equivalence class $[W]$ is independent of the choice of $Z$ representing $\alpha$.
    It follows that $\alpha \mapsto \frac{1}{|G|} [W]$ is a well-defined inverse to the base change map above.
\end{proof}

This implies that Galois-invariant summands also satisfy descent:

\begin{corollary}\label{corollary: descending direct summands of motives}
    Let $M$ be an object of $\mathsf{Mot}(S)$ and $\tilde{U}\oplus \tilde{V}$ a direct sum decomposition of $\tilde{M} = \pi^*M$ such that we have an equality of direct summands $g^*\tilde{U} = \tilde{U}$ and $g^*\tilde{V} = \tilde{V}$ in $g^*\tilde{M} = \tilde{M}$ for all $g\in G$.
    Then there exists a unique direct sum decomposition $M = U\oplus V$ such that $\pi^*U = \tilde{U}$ and $\pi^*V = \tilde{V}$.
\end{corollary}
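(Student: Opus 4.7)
The plan is to translate the datum of a direct sum decomposition into the datum of an idempotent endomorphism, and then apply the descent result of Lemma \ref{lemma: descending morphisms of motives} to that idempotent. Since $\mathsf{Mot}(S)$ is pseudo-abelian (idempotents split, by the very construction of objects as triples $(X/S,p,m)$), specifying a direct sum decomposition $\tilde{M} = \tilde{U}\oplus\tilde{V}$ in $\mathsf{Mot}(\tilde{S})$ is equivalent to specifying an idempotent $\tilde{e}\in\End(\tilde{M})$; under this bijection, $\tilde{U}$ and $\tilde{V}$ are recovered as the images of $\tilde{e}$ and $1-\tilde{e}$. The hypothesis that $g^{*}\tilde{U}=\tilde{U}$ and $g^{*}\tilde{V}=\tilde{V}$ as direct summands of $g^{*}\tilde{M}=\tilde{M}$ for every $g\in G$ is, under this translation, exactly the statement that $g^{*}\tilde{e}=\tilde{e}$, i.e.\ that $\tilde{e}$ is $G$-invariant in $\End(\tilde{M})$.

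Next, I would invoke Lemma \ref{lemma: descending morphisms of motives} with $N=M$ to produce a unique $e\in\End(M)$ with $\pi^{*}e=\tilde{e}$. To see that $e$ is itself idempotent, I note that the pullback functor $\pi^{*}$ is additive and $\mathbb{Q}$-linear, so the induced map $\End(M)\to\End(\tilde{M})$ is a ring homomorphism. Hence $\pi^{*}(e^{2}-e)=\tilde{e}^{2}-\tilde{e}=0$, and the injectivity half of Lemma \ref{lemma: descending morphisms of motives} forces $e^{2}=e$. Setting $U:=(M,e)$ and $V:=(M,1-e)$ (obtained via the pseudo-abelian structure of $\mathsf{Mot}(S)$) then produces the desired decomposition $M=U\oplus V$, and the construction of $e$ together with compatibility of pullback with composition of correspondences yields $\pi^{*}U=\tilde{U}$ and $\pi^{*}V=\tilde{V}$. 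Uniqueness is formal: any other decomposition $M=U'\oplus V'$ with the same pullback corresponds to an idempotent $e'\in\End(M)$ with $\pi^{*}e'=\tilde{e}$, and injectivity of $\pi^{*}$ on endomorphisms gives $e'=e$, hence the summands coincide.

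There is no real obstacle once Lemma \ref{lemma: descending morphisms of motives} is in hand; the entire argument is a formal descent of idempotents in the pseudo-abelian category $\mathsf{Mot}(S)$. The only points requiring minor care are, first, that the map $\End(M)\to\End(\tilde{M})$ produced by $\pi^{*}$ is a ring homomorphism (so $e^{2}-e$ pulls back to zero) and, second, that the $G$-action on $\End(\tilde{M})$ is compatible with this ring structure, so that the $G$-invariants form a subring to which $\End(M)$ maps. Both points follow immediately from the functoriality of $\pi^{*}$ and from the definition of the $G$-action via the canonical identifications $\tilde{M}=g^{*}\tilde{M}$ recalled in \S\ref{subsec: descending direct summands of motives}.
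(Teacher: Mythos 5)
Your proposal is correct and follows essentially the same route as the paper: the paper's proof is precisely the observation that a direct sum decomposition corresponds to a (pair of orthogonal) idempotent(s) in $\End(\tilde{M})$, so the claim reduces immediately to the descent of morphisms in Lemma \ref{lemma: descending morphisms of motives}. Your additional verifications (that $\pi^{*}$ is a ring homomorphism on endomorphisms, so idempotency descends by injectivity) are exactly the formal details the paper leaves implicit.
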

\begin{proof}
    Since direct sum decompositions correspond bijectively to a pair of orthogonal idempotents in the endomorphism ring, this follows immediately from Lemma \ref{lemma: descending morphisms of motives}.
\end{proof}

\subsection{Group actions on motives}\label{subsec: group actions on motives}

Let $M$ be a motive in $\mathsf{Mot}(S)$ and $G$ a finite group acting on $M$.
Then every idempotent in the group algebra $\Q[G]$ gives rise to an idempotent in $M$.
For example, if $\rho$ is an irreducible (but not necessarily absolutely irreducible) representation defined over $\Q$ and $e_{\rho}$ is the corresponding central idempotent, we get a direct summand $M_{\rho}$ of $M$, called the \define{$\rho$-isotypic component}.
Applying this to $\rho = $ the trivial representation, we have $e_{\rho} = \frac{1}{|G|}\sum_{g\in G} g$ and we see that the \define{motive of $G$-fixed points} $M^G = M_{\rho}$ is well-defined and a direct summand of $M$.

We may also consider actions by nonconstant finite group schemes, using Lemma \ref{lemma: descending morphisms of motives}.
For example, if $n$ is invertible in $S$, a $\mu_n$-action on a motive $M$ in $\mathsf{Mot}(S)$ is an $\Aut(\tilde{S}/S)$-equivariant map $\mu_n(\tilde{S})\rightarrow \Aut(f^*M)$, where $f\colon \tilde{S} = S\times_{\Z[1/n]} \Z[\zeta_n][1/n] \rightarrow S$.
The $\Aut(\tilde{S}/S)$-equivariance implies that the $\mu_n$-fixed points of $f^*M$ descend to a well-defined direct summand $M^{\mu_n}$ of $M$.

\subsection{Chow groups of motives}\label{subsec: chow groups of motives}
The (covariant) \define{Chow groups} of a motive $M$ in $\mathsf{Mot}(S)$ are by definition the $\Q$-vector spaces $\CH^p(M) := \Hom(\mathbb{L}^{\otimes p}, M)$ for $p\in \Z$, where we recall from \S\ref{subsec: relative chow motives} that $\mathbb{L}$ denotes the Lefchetz motive.
For example, $\CH^p(\frak{h}(X/S)) = \CH^p(X;\Q)$ for all $X/S$ in $\mathsf{SmProj}(S)$.

Suppose a finite group $G$ acts on $M$.
This induces an action of $G$ on $\CH^p(M)$ for every $p$.
If $e$ is an idempotent in $\Q[G]$ corresponding to direct summand $e\cdot M$ of $M$, then $\CH^p(e\cdot M) = e\cdot \CH^p(M)$.
In particular, $\CH^p(M^G) = \CH^p(M)^G$.

\subsection{Relative Artin motives}
Assume that $S$ is connected.
An \define{Artin motive} in $\mathsf{Mot}(S)$ is a direct summand of a motive of the form $\frak{h}(X/S)$, where $X\rightarrow S$ in $\mathsf{SmProj}(S)$ has relative dimension zero or equivalently, is finite \'etale.
When $S$ is the spectrum of a field, Artin motives are the same as Galois representations on finite-dimensional $\Q$-vector spaces; we give a similar (presumably well known) description in the relative setting.
Let $f\colon \tilde{S} \rightarrow S$ be a connected finite \'etale cover with Galois group $G = \Aut(\tilde{S}/S)$.
We say an Artin motive $M$ in $\mathsf{Mot}(S)$ is \define{trivialized by $f$} if $f^*M \simeq \mathbf{1}_{\tilde{S}}^{\oplus n}$ for some $n\in \Z_{\geq 0}$.
In that case the $\Q$-vector space $\Hom(\mathbf{1}_{\tilde{S}},f^*M)$ is $n$-dimensional (since $\tilde{S}$ is connected) and has a natural $G$-action, by the same reasoning as \S\ref{subsec: descending direct summands of motives}.
\begin{lemma}\label{lemma: artin motives are local systems with finite monodromy}
    Every Artin motive in $\mathsf{Mot}(S)$ is trivialized by some connected finite \'etale Galois cover $f\colon \tilde{S}\rightarrow S$.
    Fixing such an $f$, the assignment $M\mapsto V_M:=\Hom(\mathbf{1}_{\tilde{S}},f^*M)$ induces an equivalence between the full subcategory of Artin motives in $\mathsf{Mot}(S)$ trivialized by $f$ and the category of $\Aut(\tilde{S}/S)$-representations on finite-dimensional $\Q$-vector spaces.
\end{lemma}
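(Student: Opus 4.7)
The plan is to adapt the classical equivalence between Artin motives over a field and finite-dimensional continuous Galois representations to the relative setting, with the absolute Galois group replaced by the deck group $G = \Aut(\tilde{S}/S)$ and descent provided by Lemma \ref{lemma: descending morphisms of motives}. The first step is to establish the trivialization statement: any Artin motive $M$ is by definition a direct summand of $\frak{h}(X/S)$ for some finite etale $X\rightarrow S$, and standard Galois theory of finite etale covers over the connected scheme $S$ produces a connected finite etale Galois cover $\pi\colon \tilde{S}\rightarrow S$ such that $X_{\tilde{S}}\rightarrow \tilde{S}$ is a disjoint union of copies of $\tilde{S}$. Then $\pi^*\frak{h}(X/S)\simeq \mathbf{1}_{\tilde{S}}^{\oplus r}$, and since $\tilde{S}$ is connected we have $\End(\mathbf{1}_{\tilde{S}}) = \CH^0(\tilde{S};\Q) = \Q$, so any direct summand of $\mathbf{1}_{\tilde{S}}^{\oplus r}$ is of the form $\mathbf{1}_{\tilde{S}}^{\oplus n}$ and $M$ is trivialized by $\pi$.

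Next I would fix $f\colon \tilde{S}\rightarrow S$ as in the statement and verify full faithfulness of the functor $M\mapsto V_M$ on motives trivialized by $f$. Applying Lemma \ref{lemma: descending morphisms of motives} yields
\[
\Hom_{\mathsf{Mot}(S)}(M,N) \simeq \Hom_{\mathsf{Mot}(\tilde{S})}(f^*M, f^*N)^G,
\]
and for trivial motives $\mathbf{1}_{\tilde{S}}^{\oplus n}$, $\mathbf{1}_{\tilde{S}}^{\oplus m}$ the right-hand side identifies with $\Hom_\Q(V_M,V_N)^G = \Hom_G(V_M, V_N)$, again using $\End_{\mathsf{Mot}(\tilde{S})}(\mathbf{1}_{\tilde{S}}) = \Q$.

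For essential surjectivity, I would compute $V_{\frak{h}(\tilde{S}/S)}$ explicitly: $f^*\frak{h}(\tilde{S}/S) = \frak{h}(\tilde{S}\times_S \tilde{S}/\tilde{S})$, and the Galois property of $f$ gives a natural $G$-equivariant isomorphism $\tilde{S}\times_S \tilde{S} \simeq \sqcup_{g\in G} \tilde{S}$. Unwinding the definitions of \S\ref{subsec: base changing motives}--\S\ref{subsec: descending direct summands of motives} identifies $V_{\frak{h}(\tilde{S}/S)}$ with the regular representation $\Q[G]$. By Maschke's theorem any finite-dimensional $G$-representation $V$ is a direct summand of $\Q[G]^{\oplus n}$, hence arises from an idempotent in $\End_G(V_{\frak{h}(\tilde{S}/S)^{\oplus n}})$, which lifts uniquely to an idempotent in $\End(\frak{h}(\tilde{S}/S)^{\oplus n})$ by the full faithfulness already established; cutting with this idempotent produces an Artin motive trivialized by $f$ whose associated representation is $V$. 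The only subtle point is the identification in the third step of the natural $G$-action on $V_{\frak{h}(\tilde{S}/S)}$ with the left regular representation, which requires carefully tracing through the pullback and descent conventions of the previous subsections; the rest is standard semisimple category theory.
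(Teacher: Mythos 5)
Your proposal is correct and follows essentially the same route as the paper's own sketch: trivialize via a Galois cover splitting $X$, deduce full faithfulness from Lemma \ref{lemma: descending morphisms of motives} together with $\End(\mathbf{1}_{\tilde{S}})=\Q$ on the connected cover, identify $V_{\frak{h}(\tilde{S}/S)}$ with the regular representation, and get essential surjectivity since every $G$-representation is a summand of copies of the regular one. The extra detail you supply (idempotents in $M_r(\Q)$ and lifting idempotents through the faithful functor) is a fine elaboration of the same argument.
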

\begin{proof}(Sketch) 
    If $M$ is a direct summand of $\frak{h}(X/S)$, then $M$ is trivialized by the compositum of the Galois closures of the connected components of $X$, justifying the first sentence.
    Fully faithfulness of $M\mapsto V_M$ follows from Lemma \ref{lemma: descending morphisms of motives} and the fact that $\Hom(f^*M,f^*N)\rightarrow \Hom(V_M,V_N)$ is an isomorphism when $\tilde{S}$ is connected. 
    Moreover, again since $\tilde{S}$ is connected $V_{\frak{h}(\tilde{S}/S)}$ is isomorphic to the regular representation of $G$.
    Therefore essential surjectivity follows from the fact that every $G$-representation is a direct summand of copies of the regular representation.
\end{proof}

\subsection{Motives of abelian schemes}\label{subsection: motives of abelian schemes}

Let $A\rightarrow S$ be an abelian scheme of relative dimension $g$.
Deninger--Murre \cite[\S3]{DeningerMurre} have shown that the motive $\frak{h}(A/S)$ has a canonical decomposition into Chow--Künneth components.

\begin{theorem}[Deninger--Murre]\label{theorem: deninger--Murre projectors}
    There exists a unique direct sum decomposition
    \begin{align}\label{equation: deninger-murre chow-kunneth}
        \frak{h}(A/S) = \bigoplus_{i=0}^{2g} \frak{h}^i(A/S)
    \end{align}
    in $\mathsf{Mot}(S)$ with the property that $[^t\Gamma_{(n)}]$ acts as $n^{i}$ on $\frak{h}^i(A/S)$ for every $0\leq i\leq 2g$ and every $n\in \Z$.
\end{theorem}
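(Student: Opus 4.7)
The plan is to follow the original argument of Deninger--Murre, taking care to verify that each step extends from smooth quasi-projective bases over a field to adequate bases in the sense of this section. Fix any integer $n \geq 2$, so that $n^0, n^1, \ldots, n^{2g}$ are pairwise distinct. If a decomposition as in the theorem exists, with projectors $\pi^i \in \End(\frak{h}(A/S))$ cutting out $\frak{h}^i(A/S)$, then the $\pi^i$ are orthogonal idempotents summing to $[\Delta_{A/S}]$ with $[^t\Gamma_{(n)}] \circ \pi^i = n^i \pi^i$. Lagrange interpolation then expresses each $\pi^i$ as a polynomial in $[^t\Gamma_{(n)}]$, namely
\[
\pi^i = \prod_{j \neq i} \frac{[^t\Gamma_{(n)}] - n^j [\Delta_{A/S}]}{n^i - n^j},
\]
and uniqueness follows at once.

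For existence, I would take this Lagrange formula as a \emph{definition} and establish that the resulting $\pi^i$ are orthogonal idempotents with the required eigenvalue property. The key input is Beauville's Fourier transform in the relative setting: if $\mathcal{P}$ denotes the Poincar\'e bundle on $A \times_S A^\vee$, the correspondence $e^{c_1(\mathcal{P})}$ induces an isomorphism $\mathcal{F} \colon \frak{h}(A/S) \to \frak{h}(A^\vee/S)$ satisfying the two formal identities $\mathcal{F} \circ \mathcal{F} = (-1)^g [-1]^*$ and $\mathcal{F} \circ (n)^* = (n)_* \circ \mathcal{F}$. A formal consequence is that for every $m \in \Z$, the endomorphism $[^t\Gamma_{(m)}]$ annihilates the polynomial $\prod_{i=0}^{2g}(x - m^i)$ in $\End(\frak{h}(A/S))$. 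Applying this for $m = n$ turns the Lagrange-defined $\pi^i$ into orthogonal idempotents summing to $[\Delta_{A/S}]$, yielding a direct sum decomposition with summands $\frak{h}^i(A/S) := (A/S, \pi^i, 0)$ on which $[^t\Gamma_{(n)}]$ acts as $n^i$ by construction. For arbitrary $m \in \Z$, commutativity of the multiplication maps implies $[^t\Gamma_{(m)}]$ preserves each $\frak{h}^i(A/S)$, and its eigenvalue there is pinned down to $m^i$ by combining the polynomial identity above with the multiplicative relation $[^t\Gamma_{(mn^k)}] = [^t\Gamma_{(m)}] \circ [^t\Gamma_{(n)}]^k$.

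The main obstacle, such as it is, lies in the extension of Beauville's Fourier-transform machinery from field bases to adequate bases. Once the expected formal properties of intersection theory---flat and lci pullback, proper pushforward, Chern classes of line bundles, the projection formula---are in place, which is precisely the content of K\"unnemann's framework cited in this section, all of Beauville's identities reduce to formal manipulations with the Poincar\'e bundle that are insensitive to the base. Everything else is pure linear algebra in endomorphism rings of motives and presents no new difficulty.
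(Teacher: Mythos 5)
The paper does not prove this statement at all: it is quoted from Deninger--Murre \cite[\S 3]{DeningerMurre}, together with the remark at the start of \S\ref{subsec: relative chow motives} (following K\"unnemann) that their arguments go through over adequate bases because intersection theory is available there. So your plan --- re-derive the Deninger--Murre decomposition via the relative Fourier transform, with uniqueness by Lagrange interpolation --- is the same route in spirit, and the uniqueness half of your argument is complete and correct.

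The existence half, however, has a genuine gap at its pivot. You assert that the identity $\prod_{i=0}^{2g}\bigl([^t\Gamma_{(n)}]-n^i[\Delta_{A/S}]\bigr)=0$ in $\End(\frak{h}(A/S))$ is ``a formal consequence'' of the two relations $\mathcal{F}\circ\mathcal{F}=(-1)^g[-1]^*$ and $\mathcal{F}\circ (n)^*=(n)_*\circ\mathcal{F}$. It is not: those relations only give that $(n)^*$ and $(n)_*$ are conjugate under $\mathcal{F}$ (hence, with $(n)_*(n)^*=n^{2g}$, that $[^t\Gamma_{(n)}]$ is invertible); they say nothing about its spectrum being finite, let alone contained in $\{n^0,\dots,n^{2g}\}$. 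That annihilating polynomial \emph{is} the theorem, and in Deninger--Murre it is exactly what requires work: one needs the relative Beauville decomposition of $\CH^{\ast}(A;\Q)$ with its degree bounds (their Theorem 2.19), the nilpotency $c_1(\mathcal{P})^{2g+1}=0$ of the Poincar\'e class, and the computation of the Fourier transform of the diagonal (over the base $A$) as an exponential of the Poincar\'e class, whose eigencomponents under $(1\times n)^*$ then land precisely in the range $n^0,\dots,n^{2g}$. This last point is delicate in the relative setting, since the crude bound ``codimension at most the dimension of the total space'' is off by $\dim S$; the clean range independent of the base comes from the Fourier-theoretic constraint on the dual side, not from formal manipulation of the two displayed identities. (A smaller imprecision: $e^{c_1(\mathcal{P})}$ is a correspondence of mixed degree, so it is not literally a morphism $\frak{h}(A/S)\to\frak{h}(A^\vee/S)$ in $\mathsf{Mot}(S)$ with its graded correspondences; one either works with ungraded correspondences or with the twists summed.) Your closing argument pinning the eigenvalue of $[^t\Gamma_{(m)}]$ on $\frak{h}^i(A/S)$ to $m^i$ is fine, but it again invokes the annihilating polynomial for arbitrary $m$, i.e.\ the very input left unproved. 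To make the proposal complete you would either have to reproduce the Deninger--Murre Fourier-theoretic analysis over an adequate base, or simply cite it as the paper does.
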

We call the decomposition \eqref{equation: deninger-murre chow-kunneth} the \define{canonical Chow--Künneth decomposition} of $\frak{h}(A/S)$.
In what follows $\frak{h}^i(A/S)$ will always denote the $i$-th component of this decomposition.

We will use the next proposition frequently when doing explicit calculations with $\frak{h}^1$ of abelian schemes.

\begin{proposition}\label{proposition: AVs up to isogeny same as h^1} \cite[Proposition 2.2.1]{Kings-higherregulators}
    Let $A,B\rightarrow S$ be abelian schemes of the same relative dimension. 
    Then the pullback map 
    \begin{align}\label{equation: AV to h^1 is fully faithful}
        \Hom^0(A, B) \rightarrow \Hom(\frak{h}^1(B/S), \frak{h}^1(A/S))
    \end{align}
    is an isomorphism of $\Q$-vector spaces.
\end{proposition}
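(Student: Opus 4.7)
The plan is to exploit the defining characterization of the Deninger--Murre projectors from Theorem \ref{theorem: deninger--Murre projectors} together with Beauville's eigenspace decomposition of the Chow ring of an abelian scheme.

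First I would verify well-definedness. Any homomorphism $f \colon A \to B$ satisfies $f \circ (n)_A = (n)_B \circ f$, which translates at the level of transposed graphs into
\[
[^t\Gamma_{(n)_A}] \circ [^t\Gamma_f] = [^t\Gamma_f] \circ [^t\Gamma_{(n)_B}].
\]
Since by Theorem \ref{theorem: deninger--Murre projectors} the summand $\frak{h}^i(A/S)$ is the $n^i$-eigenspace of the operators $[^t\Gamma_{(n)_A}]$, this commutation forces $[^t\Gamma_f]$ to preserve the Chow--K\"unneth decomposition, and hence to restrict to a morphism $\frak{h}^1(B/S) \to \frak{h}^1(A/S)$. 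The resulting pullback map is manifestly $\Q$-linear and produces \eqref{equation: AV to h^1 is fully faithful}.

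For bijectivity, let $g$ be the common relative dimension and, writing $\pi^1_A, \pi^1_B$ for the Chow--K\"unneth projectors onto $\frak{h}^1$, set
\[
V := \pi^1_A \circ \Corr^0(B, A) \circ \pi^1_B \subseteq \CH^g(B \times_S A; \Q).
\]
Using the eigenvalue characterization of the projectors, $V$ is equivalently the bi-weight-one eigenspace on which $[^t\Gamma_{(n)_A}]$ acts from the left as multiplication by $n$ and $[^t\Gamma_{(m)_B}]$ acts from the right as multiplication by $m$, for all $m,n \in \Z$. By construction $\Hom(\frak{h}^1(B/S), \frak{h}^1(A/S)) = V$, so the proposition reduces to showing that the composite $f \mapsto \pi^1_A \circ [^t\Gamma_f] \circ \pi^1_B$ is an isomorphism $\Hom^0(A, B) \xrightarrow{\sim} V$.

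The main obstacle is precisely this identification, and it is where Beauville's Fourier transform enters. Over a field, the Fourier transform decomposes $\CH^g(B \times A; \Q)$ into bi-eigenspaces for the multiplication-by-$n$ operators on each factor, and the bi-weight-one component is identified, via the Poincar\'e bundle on $B \times B^\vee$, with the summand $\Hom(A^\vee, B) \otimes \Q$, which equals $\Hom^0(A, B)$ after Poincar\'e biduality. The same argument applies verbatim in the relative setting using the Fourier transform developed in \cite{DeningerMurre}, and is carried out in detail in \cite[Proposition 2.2.1]{Kings-higherregulators}, which one could simply invoke as a black box.
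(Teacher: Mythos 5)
Your proposal takes essentially the same route as the paper: the paper offers no independent argument for this proposition and simply quotes it as \cite[Proposition 2.2.1]{Kings-higherregulators}, which is exactly what you do for the substantive identification of the bi-weight-one correspondences with $\Hom^0(A,B)$, and your preliminary check that $[^t\Gamma_f]$ intertwines the multiplication-by-$n$ operators and hence respects the Chow--K\"unneth decomposition is correct. (One small imprecision in the part you black-box anyway: Poincar\'e biduality $A\simeq A^{\vee\vee}$ does not identify $\Hom(A^{\vee},B)\otimes \Q$ with $\Hom^0(A,B)$ --- these are only isomorphic because $A$ and $A^{\vee}$ are isogenous --- but since you defer the Fourier-transform step to the reference, this does not affect the argument.)
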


We also include the following result on abelian schemes, which follows from \cite[\S2, Lemma 1]{Faltings-finitenesstheorems} and the fact that adequate schemes are normal.
\begin{proposition}\label{prop: abelian schemes normal bases}
    Suppose that $S$ is integral with generic point $\Spec(K)$.
    Then the map $A/S\mapsto A_K/K$ identifies the category of abelian schemes over $S$ with a full subcategory of the category of abelian varieties over $K$.
\end{proposition}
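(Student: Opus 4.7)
The plan is to deduce the statement directly from Faltings' extension theorem for morphisms between abelian schemes over a normal base, which asserts that if $S$ is a normal integral Noetherian scheme with generic point $\Spec K$ and $A, B$ are abelian schemes over $S$, then the restriction map $\Hom_S(A, B) \to \Hom_K(A_K, B_K)$ is bijective. Such bijectivity is exactly the statement that $A \mapsto A_K$ is fully faithful, and hence identifies abelian $S$-schemes with a full subcategory of abelian varieties over $K$.

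First I would verify that the hypotheses of Faltings' lemma are met, which amounts to checking that $S$ is normal. By our convention an adequate scheme is smooth and quasi-projective over some Dedekind domain $R$. Since $R$ is regular (being of Krull dimension at most $1$ and regular at every point) and smoothness transfers regularity from the base to the total space, $S$ is regular and therefore normal. Combined with the standing assumption that $S$ is integral, this gives that $S$ is a normal integral Noetherian scheme with well-defined generic point $\Spec K$.

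Faithfulness of the restriction functor is then essentially automatic: the inclusion $\Spec K \hookrightarrow S$ is scheme-theoretically dense in the integral scheme $S$, and since abelian schemes are separated, two $S$-homomorphisms agreeing on the generic fiber must coincide. Fullness is precisely the content of \cite[\S2, Lemma 1]{Faltings-finitenesstheorems}, which guarantees that any $K$-homomorphism $A_K \to B_K$ extends uniquely to an $S$-homomorphism $A \to B$. Putting these two observations together yields the claimed full faithfulness.

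The only possible obstacle is confirming applicability of Faltings' lemma, but once the normality of $S$ is established in the second step, the proposition follows without further computation.
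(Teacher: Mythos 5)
Your proof is correct and matches the paper's argument: the paper likewise deduces the proposition from \cite[\S2, Lemma 1]{Faltings-finitenesstheorems} together with the observation that adequate schemes, being smooth over a Dedekind domain, are normal. Your additional remarks on faithfulness via density of the generic point are fine but already contained in the bijectivity statement of Faltings' lemma.
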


\section{Proof of main theorem}\label{sec: motivic computations and proof of main theorem}

This section is the technical heart of the paper. 
In \S\ref{subsec: an isomorphism of motives} we state and prove the isomorphism of motives \eqref{equation: intro iso of chow motives} promised in the introduction.
Next in \S\ref{subsec: chow groups} we translate this isomorphism into a concrete statement about Chow groups, and in \S\ref{subsec: image of ceresa cycle} we show that one of these Chow groups contains the Ceresa cycle.
In \S\ref{subsec: identifying sigma} we identify the image of the Ceresa cycle under our constructed isomorphism.
In the last two subsections we combine everything and prove the theorems and corollaries stated in the introduction.

\subsection{An isomorphism of motives}\label{subsec: an isomorphism of motives}

Let $C=C_{a,b}$ be a bielliptic Picard curve \eqref{equation: bielliptic picard body of text} over a field $k$ with Jacobian $J$.
The $\mu_6$-action on $C$ induces, via pullback, a $\mu_6$-action on $J$ and $\frak{h}(J)$.
The uniqueness of the Chow--Künneth decomposition \eqref{equation: deninger-murre chow-kunneth} shows that this $\mu_6$-action restricts to a $\mu_6$-action on $\frak{h}^i(J)$ for each $0\leq i\leq 6$.
Let $E^{\Delta} \colon y^2 = x^3 + 4b(a^2 - 4b)^2$ be the sextic twist of $E:y^2 = x^3+16(a^2-4b)$ by $\Delta = \Delta_{a,b} = 16b(a^2 - 4b)$.
Recall from \S\ref{subsec: group actions on motives} that $\frak{h}^3(J)^{\mu_6}$ is a direct summand of $\frak{h}^3(J)$.
\begin{theorem}\label{thm: isomorphism of motives over a field}
    There is an isomorphism in $\mathsf{Mot}(k)$: $\frak{h}^3(J)^{\mu_6} \simeq \frak{h}^1(E^{\Delta})(-1)$.
\end{theorem}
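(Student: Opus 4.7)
The plan is to prove the isomorphism universally by constructing an iso $\mathfrak{h}^3(\mathcal{J}/S)^{\mu_6} \simeq \mathfrak{h}^1(\mathcal{E}^\Delta/S)(-1)$ in $\mathsf{Mot}(S)$ and then pulling back along the morphism $\Spec(k)\to S$ determined by the pair $(a,b)$, using the pullback functor of \S\ref{subsec: base changing motives}. Applying Corollary \ref{corollary: descending direct summands of motives} to the Galois cover $\pi\colon \tilde{S}\to S$ with group $G$, it suffices to produce a $G$-equivariant isomorphism over $\tilde{S}$.

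Over $\tilde{S}$, I would use the isogeny $\mathcal{J}\sim \mathcal{P}\times \mathcal{E}$ from Lemma \ref{lemma: J isogenous to P times E}, which upgrades to a motivic isomorphism $\mathfrak{h}(\mathcal{J})\simeq \mathfrak{h}(\mathcal{P})\otimes \mathfrak{h}(\mathcal{E})$ since isogenies become invertible in $\mathsf{Mot}(\tilde S)$ (with its $\Q$-coefficients). Combined with $\dim \mathcal{E}=1$, Künneth yields
\[
\mathfrak{h}^3(\mathcal{J}) \;\simeq\; \mathfrak{h}^3(\mathcal{P}) \,\oplus\, \bigl(\mathfrak{h}^2(\mathcal{P})\otimes \mathfrak{h}^1(\mathcal{E})\bigr) \,\oplus\, \mathfrak{h}^1(\mathcal{P})(-1).
\]
The involution $\tau\in \mu_2\subset \mu_6$ acts as $-1$ on $\mathcal{P}$ and trivially on $\mathcal{E}$, so by Theorem \ref{theorem: deninger--Murre projectors} it acts as $(-1)^i$ on $\mathfrak{h}^i(\mathcal{P})$ and trivially on each $\mathfrak{h}^j(\mathcal{E})$; only the middle summand survives, whence $\mathfrak{h}^3(\mathcal{J})^{\mu_2}\simeq \mathfrak{h}^2(\mathcal{P})\otimes \mathfrak{h}^1(\mathcal{E})$.

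The $\mu_3$-step is the heart of the argument. I would invoke Moonen's refinement of the Chow--Künneth decomposition in the presence of extra endomorphisms to isolate the Neron--Severi sub-motive of $\mathfrak{h}^2(\mathcal{P}_{\tilde{S}})$, whose sections are identified with $\Hom^{\sym}(\mathcal{P},\mathcal{P}^\vee)\otimes \Q$. By Theorem \ref{thm: Neron Severi action} and Lemma \ref{lemma: description mu3-action on NS(P)}, $\mu_3$ acts on this piece via $\mathrm{triv}\oplus \mathrm{std}$, with the two nontrivial characters of $\mu_3$ appearing only inside $\mathrm{std}$ (after base change to $\Qbar$). Since the $\mu_3$-action on $\mathfrak{h}^1(\mathcal{E})$ decomposes into the same two nontrivial characters (because $\mathcal{E}$ has $j$-invariant $0$), pairing eigenspaces isolates exactly the $\mathrm{std}$-summand tensored with $\mathfrak{h}^1(\mathcal{E})$, leaving a $2$-dimensional motive carrying a canonical $G$-action. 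To recognize this motive as $\mathfrak{h}^1(\mathcal{E}^\Delta)(-1)$, I would use Proposition \ref{proposition: AVs up to isogeny same as h^1}: morphisms of $\mathfrak{h}^1$ of elliptic schemes correspond bijectively to isogenies, so it suffices to match $G$-equivariant structures. Unwinding the explicit quaternionic basis of $\End^0(\mathcal{P}_{\tilde S})$ recorded in the proof of Lemma \ref{lemma: description mu3-action on NS(P)}, the $G$-action arising from $\mathrm{std}$ matches the cocycle defining the sextic twist $\mathcal{E}^\Delta$.

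The main obstacle is verifying that \emph{only} the Neron--Severi part of $\mathfrak{h}^2(\mathcal{P}_{\tilde{S}})$ contributes $\mu_3$-invariants after tensoring with $\mathfrak{h}^1(\mathcal{E})$ --- that is, that the transcendental complement supplied by Moonen's decomposition admits no $\mu_3$-invariant tensor pairing with $\mathfrak{h}^1(\mathcal{E})$. A secondary challenge is pinning down the precise sextic twist (rather than some other elliptic curve of $j$-invariant $0$), which requires careful bookkeeping with the explicit quaternion basis and the cocycle defining $\tilde{S}\to S$.
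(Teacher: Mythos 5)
Your overall strategy coincides with the paper's: prove the isomorphism universally (Theorem \ref{theorem: iso relative chow motives}) and specialize along $\Spec(k)\to S$; descend from $\tilde S$ via Corollary \ref{corollary: descending direct summands of motives}; use the isogeny $\mathcal{J}\sim\mathcal{P}\times_S\mathcal{E}$ and the involution $\tau$ to reduce to $(\frak{h}^2(\mathcal{P})\otimes\frak{h}^1(\mathcal{E}))^{\mu_3}$ (Lemma \ref{lemma: mu6-invs h^3 controlled by h^2(P)}); and invoke Moonen to split off the N\'eron--Severi part of $\frak{h}^2(\mathcal{P})$ (Proposition \ref{proposition: moonen decomposition h2}, Corollary \ref{corollary: mu6-invs h^2(P) controlled by h^2alg}). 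However, the two points you flag as ``obstacles'' are exactly where the paper does its real work, and you resolve neither. The first one (that the transcendental complement contributes no $\mu_3$-invariants after tensoring with $\frak{h}^1(\mathcal{E})$) has a short argument you could have supplied from Moonen's description: in the decomposition $\frak{h}^2(\mathcal{P}_{\tilde S})=R^{(2,0)}\oplus R^{(1,1)}$, the summand $R^{(1,1)}$ is isotypic for the reduced norm of $B^{\times}$, which is trivial on $\mu_3=\langle\omega\rangle\subset\Q(\omega)^{\times}\subset B^{\times}$; hence $\mu_3$ acts trivially on $\frak{h}^2_{\mathrm{tr}}(\mathcal{P})$, and since $\mu_3$ acts faithfully on $\mathcal{E}$ one has $\frak{h}^1(\mathcal{E})^{\mu_3}=0$, killing that term.

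The second gap is the serious one. To identify $(\mathrm{std}\otimes\frak{h}^1(\mathcal{E}))^{\mu_3}$ you propose a direct cocycle match using the explicit quaternion basis, but you do not carry it out, and this is precisely the step the authors avoid doing directly. The paper instead forms the abelian scheme $\mathcal{E}\otimes\mathrm{std}$ (\`a la Mazur--Rubin--Silverberg), uses Proposition \ref{proposition: AVs up to isogeny same as h^1} and Proposition \ref{prop: abelian schemes normal bases} to turn the question into an isogeny statement over the function field, and proves $\mathcal{E}\otimes\mathrm{std}\sim\mathcal{E}^{\Delta}\times\mathcal{E}^{\Delta^{-1}}$ (Lemma \ref{lemma: E otimes std is isogenous to two elliptic curves}), so that the $\mu_3$-fixed part is isogenous to one of the two twists $\mathcal{E}^{\Delta}$ or $\mathcal{E}^{\Delta^{-1}}$. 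Distinguishing these is then done by an indirect argument in Lemma \ref{lemma: mu6-invs NS gives Delta twist}: pulling back along the rescaling $\lambda_d\colon(a,b)\mapsto(da,d^2b)$, the motive $\frak{h}^3(\mathcal{J}/S)^{\mu_6}$ is insensitive to this sextic twist while $\lambda_d^*\mathcal{E}^{\Delta^{-1}}\simeq\mathcal{E}^{d^{-2}\Delta^{-1}}$ is not, and a specialization at $(a,b)=(1,1)$, $d=3^{-1}$ would force $y^2=x^3+1$ and $y^2=x^3+9$ to be isogenous over $\Q$, a contradiction. Without this step (or a genuinely completed, correctly normalized computation of which order-$6$ character of $\Gal(\tilde K/K(\omega))$ occurs), your argument only yields $\frak{h}^3(J)^{\mu_6}\simeq\frak{h}^1(E')(-1)$ for \emph{some} $j$-invariant $0$ twist $E'$, which is weaker than the stated theorem and would not support the later identification of the Ceresa section with a multiple of the marked point $\mathcal{Q}$ on $\mathcal{E}^{\Delta}$.
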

In fact, it will be crucial for our proof of Theorem \ref{thm:main} to construct this isomorphism universally. 
We will use the notation of \S\ref{subsec: galois action NS group}, so $S = \A^2_{\Z[1/6]}\setminus \{\Delta_{a,b} = 0\}$ is the base scheme and $\mathcal{C} \rightarrow S$ the universal bielliptic Picard curve with Jacobian $\mathcal{J}\rightarrow S$ and Prym variety $\mathcal{P}\rightarrow S$.
Let $\mathcal{E}\rightarrow S$ be the relative elliptic curve with equation $y^2 = x^3 +16(a^2-4b)$ and let $\mathcal{E}^\Delta\rightarrow S$ be the relative elliptic curve with equation $y^2 = x^3 + 4b(a^2 - 4b)^2$.
There is again a $\mu_6$-action on $\mathcal{C}$, $\mathcal{J}$ and $\frak{h}^3(\mathcal{J}/S)$. 
The goal of this subsection is to show:
\begin{theorem}\label{theorem: iso relative chow motives}
    There is an isomorphism in $\mathsf{Mot}(S)$: $\frak{h}^3(\mathcal{J}/S)^{\mu_6}\simeq \frak{h}^1(\mathcal{E}^{\Delta}/S)(-1)$.
\end{theorem}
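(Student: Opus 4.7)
The strategy is to use descent to reduce to producing a $G$-equivariant isomorphism of relative Chow motives over $\tilde{S}$, and then to build this isomorphism explicitly by combining the Prym decomposition of $\mathcal{J}$ with the K\"unneth--Moonen decompositions of the resulting pieces. By Corollary \ref{corollary: descending direct summands of motives} and Lemma \ref{lemma: descending morphisms of motives}, it suffices to exhibit a $G$-equivariant isomorphism
\[
\pi^*\bigl(\frak{h}^3(\mathcal{J}/S)^{\mu_6}\bigr) \simeq \pi^*\bigl(\frak{h}^1(\mathcal{E}^\Delta/S)(-1)\bigr)
\]
in $\mathsf{Mot}(\tilde{S})$, where $\pi\colon \tilde{S} \to S$ is the Galois cover with Galois group $G$.

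First I would upgrade the fiberwise isogeny of Lemma \ref{lemma: J isogenous to P times E} to an isogeny of abelian schemes $\mathcal{J} \sim \mathcal{P} \times \mathcal{E}$ over $S$, using Proposition \ref{prop: abelian schemes normal bases} to extend from the generic fiber and checking $\mu_6$-equivariance from the construction. Applying Proposition \ref{proposition: AVs up to isogeny same as h^1} then yields a $\mu_6$-equivariant splitting $\frak{h}^1(\mathcal{J}) \simeq \frak{h}^1(\mathcal{P}) \oplus \frak{h}^1(\mathcal{E})$, and K\"unnemann's formula $\frak{h}^i(A) \simeq \bigwedge^i \frak{h}^1(A)$ for abelian schemes gives
\[
\frak{h}^3(\mathcal{J}) \simeq \frak{h}^3(\mathcal{P}) \oplus \bigl(\frak{h}^2(\mathcal{P}) \otimes \frak{h}^1(\mathcal{E})\bigr) \oplus \bigl(\frak{h}^1(\mathcal{P}) \otimes \frak{h}^2(\mathcal{E})\bigr),
\]
the $\bigwedge^3 \frak{h}^1(\mathcal{E})$ term vanishing since $\mathcal{E}$ has relative dimension one. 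The involution $\tau \in \mu_6$ acts as $-1$ on $\frak{h}^1(\mathcal{P})$ and as $+1$ on $\frak{h}^1(\mathcal{E})$, so the $\tau$-invariants collapse to $\frak{h}^2(\mathcal{P}) \otimes \frak{h}^1(\mathcal{E})$.

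Next I would pass to $\tilde{S}$ and take $\mu_3$-invariants. The curve $\mathcal{E}$ has $j$-invariant zero, so $\mu_3$ acts on $\frak{h}^1(\mathcal{E}_{\tilde{S}}) \otimes \Q(\omega)$ through its two nontrivial characters, each with multiplicity one. For $\frak{h}^2(\mathcal{P}_{\tilde{S}}) \otimes \Q(\omega)$, a direct exterior-algebra computation shows that each of the two nontrivial $\mu_3$-eigencomponents has rank one, and by Moonen's refined Chow--K\"unneth decomposition together with Theorem \ref{thm: Neron Severi action} and Lemma \ref{lemma: description mu3-action on NS(P)}, these eigenspaces are identified, via the cycle class map $\Hom^{\mathrm{sym}}(\mathcal{P},\mathcal{P}^\vee)_\Q \otimes \mathbb{L} \hookrightarrow \frak{h}^2(\mathcal{P})$, with the two Galois-conjugate characters of $\mu_3$ inside $\mathrm{std} \otimes \Q(\omega)$. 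Pairing these against the $\mu_3$-eigenspaces of $\frak{h}^1(\mathcal{E})$ and combining the Galois-conjugate summands yields
\[
\bigl(\frak{h}^2(\mathcal{P}) \otimes \frak{h}^1(\mathcal{E})\bigr)^{\mu_3} \simeq \frak{h}^1(\mathcal{E}_{\tilde{S}})(-1) \otimes \rho
\]
over $\tilde{S}$, where $\rho$ is a rank-one $G$-representation inherited from the $\mathrm{std}$ part of $\NS(\mathcal{P})$.

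The main obstacle is the final step: identifying $\rho$ with the character that twists $\mathcal{E}$ into $\mathcal{E}^\Delta$. Concretely, $\mathcal{E}^\Delta$ is the sextic twist of $\mathcal{E}$ by $\Delta$, so its pullback to $\tilde{S}$ differs from that of $\mathcal{E}$ by a rank-one $G$-character determined by $g \mapsto g(\varepsilon)/\varepsilon$, where $\varepsilon = \Delta^{1/6}$. On the other hand, the explicit basis $\{1, i, j, ij\}$ of $B$ given in the proof of Lemma \ref{lemma: description mu3-action on NS(P)} realizes $\mathrm{std}$ as the span of $\{i, ij\}$, and the $G$-action on these generators can be read off from the presentation of $\tilde{R}$. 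Carrying out this comparison gives the desired $G$-equivariant isomorphism over $\tilde{S}$ and, by descent, the claimed isomorphism in $\mathsf{Mot}(S)$.
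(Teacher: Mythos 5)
Your reduction steps reproduce the paper's argument almost verbatim: the isogeny $\mathcal{J}\sim\mathcal{P}\times_S\mathcal{E}$, the K\"unneth decomposition of $\frak{h}^3(\mathcal{J}/S)$, the collapse to $(\frak{h}^2(\mathcal{P}/S)\otimes\frak{h}^1(\mathcal{E}/S))^{\mu_3}$ via the action of $\tau$, and the use of Moonen's decomposition together with Theorem \ref{thm: Neron Severi action} and Lemma \ref{lemma: description mu3-action on NS(P)} to reduce to $(\NS_{\mathcal{P}}\otimes\frak{h}^1(\mathcal{E}/S))^{\mu_3}$ are exactly Lemma \ref{lemma: mu6-invs h^3 controlled by h^2(P)}, Proposition \ref{proposition: moonen decomposition h2} and Corollary \ref{corollary: mu6-invs h^2(P) controlled by h^2alg}. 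Up to that point the proposal is fine.

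The gap is in your final step, and it is precisely the step the whole theorem hinges on. After the reduction, the descent datum on $\frak{h}^1(\mathcal{E}_{\tilde{S}})$ is given by one of the two conjugate order-$6$ characters of $H=\Gal(\tilde{K}/K(\omega))$, and the two possibilities yield $\mathcal{E}^{\Delta}$ and $\mathcal{E}^{\Delta^{-1}}$ respectively; these are genuinely non-isogenous over $S$ (specialize at $(a,b)=(1,1)$: $y^2=x^3+4\cdot 9$ versus $y^2=x^3+1$ are not related in the required way, and the paper's own specialization argument exhibits the failure). Deciding which character occurs requires knowing the \emph{correlation} between the $\mu_3$-eigenlines of $\mathrm{std}\otimes\Qbar$ and the $H$-characters acting on them, and likewise on $\frak{h}^1(\mathcal{E}_{\tilde{S}})$. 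This is not determined by the presentation of $\tilde{R}$, nor by the inputs quoted in this paper: Theorem \ref{thm: Neron Severi action} only gives the isomorphism class of $\NS_{\mathcal{P}}$ as a $G$-representation, and Lemma \ref{lemma: description mu3-action on NS(P)} only says the two nontrivial $\mu_3$-characters each occur in $\mathrm{std}\otimes\Qbar$, without labelling which $H$-character sits on which eigenline. Both answers $\Delta$ and $\Delta^{-1}$ are consistent with that coarse data, so ``carrying out this comparison'' as you describe it cannot conclude; one would need the explicit Galois action on the quaternionic endomorphisms of $\mathcal{P}_{\tilde{S}}$ (the fine content of \cite[Cor.\ 6.8]{LagaShnidman}) together with delicate orientation bookkeeping. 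The paper deliberately avoids this: it forms the abelian scheme $\mathcal{E}\otimes\mathrm{std}$, shows $\mathcal{E}\otimes\mathrm{std}\sim\mathcal{E}^{\Delta}\times\mathcal{E}^{\Delta^{-1}}$ (Lemma \ref{lemma: E otimes std is isogenous to two elliptic curves}), and then excludes the $\mathcal{E}^{\Delta^{-1}}$ alternative by an indirect argument comparing $\frak{h}^3(\mathcal{J}/S)^{\mu_6}$ with its pullback under the rescaling $\lambda_d\colon(a,b)\mapsto(da,d^2b)$ and specializing at $(a,b)=(1,1)$, $d=3^{-1}$ to produce the non-isogenous pair $y^2=x^3+1$ and $y^2=x^3+9$. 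Either supply the explicit equivariant computation you allude to (which is substantially more work than ``reading off'' the action), or adopt an indirect disambiguation of this kind; as written, the proposal does not prove the theorem.
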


Specializing this isomorphism at  $k$-points $\Spec(k) \rightarrow S$ recovers Theorem \ref{thm: isomorphism of motives over a field}.
The proof of Theorem \ref{theorem: iso relative chow motives} will be given at the end of this subsection after combining a few different ingredients.

The starting point is the decomposition
\[
\frak{h}^1(\mathcal{J}/S) = \frak{h}^1(\mathcal{P}/S)  \oplus  \frak{h}^1(\mathcal{E}/S)
\]
induced by the isogeny $\mathcal{J}\sim \mathcal{P} \times_S \mathcal{E}$ of Lemma \ref{lemma: J isogenous to P times E}.
Via the isomorphism $\frak{h}^3(\mathcal{J}/S)\simeq \bigwedge^3 \frak{h}^1(\mathcal{J}/S)$ \cite{kunnemann} and similarly for $\mathcal{P}$ and $\mathcal{E}$, we obtain a $\mu_6$-equivariant decomposition 
\begin{align}\label{decomposotion h^3(J)}
\frak{h}^3(\mathcal{J}/S) = \frak{h}^3(\mathcal{P}/S)  \oplus (\frak{h}^2(\mathcal{P}/S)\otimes \frak{h}^1(\mathcal{E}/S))\oplus (\frak{h}^1(\mathcal{P}/S)\otimes \frak{h}^2(\mathcal{E}/S)).
\end{align}
\begin{lemma}\label{lemma: mu6-invs h^3 controlled by h^2(P)}
    $\frak{h}^3(\mathcal{J}/S)^{\mu_6} =(\frak{h}^2(\mathcal{P}/S)\otimes \frak{h}^1(\mathcal{E}/S))^{\mu_3}$.
\end{lemma}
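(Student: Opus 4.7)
\emph{Proof proposal.} The plan is to exploit the isomorphism $\mu_6 \simeq \mu_2 \times \mu_3$ of group schemes over $S$ (valid because $6$ is invertible on $S$) and take $\mu_6$-invariants in two stages: first under $\mu_2$, then under $\mu_3$. The bulk of the work happens in the first stage, where I expect all but the middle summand of \eqref{decomposotion h^3(J)} to be killed.

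To begin, I would confirm that \eqref{decomposotion h^3(J)} is $\mu_6$-equivariant. Since $\mu_6$ is abelian, the involution $\tau$ (the nontrivial section of $\mu_2 \subset \mu_6$) commutes with $\mu_3 \subset \mu_6$; hence the sub-abelian scheme $\mathcal{P} = \ker(1 + \tau^*)$ is preserved by $\mu_6$, while the quotient $\mathcal{E} = \mathcal{C}/\tau$ carries a residual $\mu_3$-action on which $\tau$ descends to the identity. The isogeny $\mathcal{J} \sim \mathcal{P} \times_S \mathcal{E}$ of Lemma \ref{lemma: J isogenous to P times E} is then $\mu_6$-equivariant, and passing through $\frak{h}^1(\mathcal{J}/S) \simeq \frak{h}^1(\mathcal{P}/S) \oplus \frak{h}^1(\mathcal{E}/S)$ together with the motivic K\"unneth formula \cite{kunnemann} and the vanishing $\frak{h}^j(\mathcal{E}/S) = 0$ for $j \geq 3$ delivers \eqref{decomposotion h^3(J)} $\mu_6$-equivariantly.

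Next I would read off the action of $\tau$ on each summand. Since $\tau$ acts on $\mathcal{P}$ as the endomorphism $[-1]$ by the very definition of the Prym, the characterization of the Chow--K\"unneth projectors (Theorem \ref{theorem: deninger--Murre projectors}) forces $\tau$ to act as $(-1)^i$ on $\frak{h}^i(\mathcal{P}/S)$; on $\mathcal{E}$, $\tau$ is the identity, so it acts as $+1$ on every $\frak{h}^j(\mathcal{E}/S)$. The three summands of \eqref{decomposotion h^3(J)} therefore have $\tau$-weights $-1$, $+1$, $-1$ respectively, and the $\mu_2$-invariants formalism of \S\ref{subsec: group actions on motives} selects only the middle one. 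Since $\mu_2$ and $\mu_3$ commute inside the abelian group scheme $\mu_6$, invariants may be iterated to conclude
\[ \frak{h}^3(\mathcal{J}/S)^{\mu_6} = \bigl(\frak{h}^3(\mathcal{J}/S)^{\mu_2}\bigr)^{\mu_3} = \bigl(\frak{h}^2(\mathcal{P}/S) \otimes \frak{h}^1(\mathcal{E}/S)\bigr)^{\mu_3}. \]
The only subtlety will be bookkeeping the non-constant group scheme $\mu_3$ over $S$ via the formalism of \S\ref{subsec: descending direct summands of motives}--\S\ref{subsec: group actions on motives}, i.e.\ passing to the étale cover $\tilde{S}$ where $\mu_3$ trivialises and descending invariants; once that setup is granted, the proof reduces to a sign computation.
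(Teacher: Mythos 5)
Your argument is correct and follows essentially the same route as the paper: identify the $\tau$-weights $(-1)^i$ on $\frak{h}^i(\mathcal{P}/S)$ via the Deninger--Murre characterization and $+1$ on $\frak{h}^j(\mathcal{E}/S)$, observe that $\mu_2$-invariance kills the $i=1,3$ summands of \eqref{decomposotion h^3(J)}, and then pass to $\mu_3$-invariants on the surviving middle term. The only difference is that you spell out the two-stage $\mu_6 = \mu_2 \times \mu_3$ bookkeeping more explicitly, but the underlying sign computation is identical.
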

\begin{proof}
    By definition of $\mathcal{P}$ and $\mathcal{E}$ in \S\ref{subsec: prym varieties}, if $\mu_2(S) = \{1,\tau\}$ then $\tau$ acts as $(-1)$ on $\mathcal{P}$ and as the identity on $\mathcal{E}$. 
    Since $[^t\Gamma_{(n)}]$ acts as $n^i$ on $\frak{h}^i$ of an abelian scheme, we see that $\tau$ acts on $\frak{h}^i(\mathcal{P}/S)\otimes \frak{h}^{3-i}(\mathcal{E}/S)$ as $-1$ for $i=1,3$ and as the identity for $i=2$.
    It follows that $(\frak{h}^i(\mathcal{P}/S)\otimes \frak{h}^{3-i}(\mathcal{E}/S))^{\mu_6} = (\frak{h}^i(\mathcal{P}/S)\otimes \frak{h}^{3-i}(\mathcal{E}/S))^{\mu_2}=0$ if $i=1,3$ and $(\frak{h}^2(\mathcal{P}/S)\otimes\frak{h}^1(\mathcal{E}/S))^{\mu_6} = (\frak{h}^2(\mathcal{P}/S)\otimes\frak{h}^1(\mathcal{E}/S))^{\mu_3}$, so we conclude by \eqref{decomposotion h^3(J)}.
\end{proof}

It remains to determine $(\frak{h}^2(\mathcal{P}/S)\otimes \frak{h}^1(\mathcal{E}/S))^{\mu_3}$. 
To this end, we decompose $\frak{h}^2(\mathcal{P}/S)$ further using results of Moonen \cite{Moonen} and the quaternionic action on $\mathcal{P}$.

Recall the finite \'etale cover $\tilde{S}\rightarrow S$ from \S\ref{subsec: galois action NS group} given by adjoining a sixth root of unity and a sixth root of the discriminant $\Delta$.
This cover has Galois group $G = \Aut(\tilde{S}/S)\simeq D_6$.
Also recall from Theorem \ref{thm: Neron Severi action} the $G$-representation $\Hom^{\sym}(\mathcal{P}_{\tilde{S}},\mathcal{P}^{\vee}_{\tilde{S}})\otimes \Q$ of self-dual homomorphisms; let $\NS_{\mathcal{P}}$ be the corresponding Artin motive in $\mathsf{Mot}(S)$ under the equivalence of Lemma \ref{lemma: artin motives are local systems with finite monodromy}.
The $\mu_3$-action on $\mathcal{P}$ induces via pullback a $\mu_3$-action on $\NS_{\mathcal{P}}$ which is described in Lemma \ref{lemma: description mu3-action on NS(P)}.

\begin{proposition}\label{proposition: moonen decomposition h2}
    There exists a $\mu_3$-stable decomposition 
    \begin{align}\label{equation: moonen decomposition h2}
        \frak{h}^2(\mathcal{P}/S) = \frak{h}_{\mathrm{alg}}^2(\mathcal{P}/S)\oplus \frak{h}^2_{\mathrm{tr}}(\mathcal{P}/S)
    \end{align}
    in $\mathsf{Mot}(S)$ with the following two properties:
    \begin{enumerate}
        \item there is a $\mu_3$-equivariant isomorphism $\frak{h}_{\mathrm{alg}}^2(\mathcal{P}/S)\simeq \NS_{\mathcal{P}}(-1)$; 
        \item The $\mu_3$-action on $\frak{h}_{\mathrm{tr}}^2(\mathcal{P})$ is trivial.
    \end{enumerate}
\end{proposition}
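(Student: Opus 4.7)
The plan is to apply Moonen's refinement \cite{Moonen} of the Chow--Künneth decomposition to $\mathcal{P}/S$, reducing everything to the étale Galois cover $\tilde{S}\to S$ (where $\mathcal{P}$ acquires its full quaternionic endomorphism algebra $B$ by Theorem \ref{thm: Neron Severi action}) and then descending via Corollary \ref{corollary: descending direct summands of motives}.

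First I would construct a morphism $\varphi\colon \NS_{\mathcal{P}}(-1)\to \frak{h}^2(\mathcal{P}/S)$ directly. Over $\tilde{S}$, each self-dual homomorphism $\lambda\in \Hom^{\sym}(\mathcal{P}_{\tilde{S}},\mathcal{P}_{\tilde{S}}^{\vee})$ corresponds to a symmetric line bundle $\mathcal{L}_\lambda$, yielding a class $c_1(\mathcal{L}_\lambda)\in \CH^1(\mathcal{P}_{\tilde{S}})$. The Deninger--Murre decomposition (Theorem \ref{theorem: deninger--Murre projectors}) isolates a canonical $\frak{h}^2$-component in $\Hom_{\mathsf{Mot}(\tilde{S})}(\mathbf{1}_{\tilde{S}},\frak{h}^2(\mathcal{P}_{\tilde{S}}/\tilde{S})(1))$. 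This assignment is $G$-equivariant by functoriality, so Lemma \ref{lemma: descending morphisms of motives} allows me to descend it to a morphism $\varphi$ over $S$. Next I would invoke Moonen's construction, which for an abelian scheme with large enough endomorphism algebra (here quaternionic) produces a canonical splitting of $\varphi$, exhibiting $\NS_{\mathcal{P}_{\tilde{S}}}(-1)$ as a direct summand of $\frak{h}^2(\mathcal{P}_{\tilde{S}}/\tilde{S})$. Since Moonen's projectors are polynomials in the Pontryagin action of the endomorphism algebra and the Fourier transform, and the action of $G$ preserves $\End(\mathcal{P}_{\tilde{S}})$, these projectors are $G$-stable and thus descend by Corollary \ref{corollary: descending direct summands of motives} to the desired decomposition $\frak{h}^2(\mathcal{P}/S)=\frak{h}^2_{\mathrm{alg}}(\mathcal{P}/S)\oplus \frak{h}^2_{\mathrm{tr}}(\mathcal{P}/S)$.

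For property (1), the $\mu_3$-equivariance of the isomorphism $\frak{h}^2_{\mathrm{alg}}(\mathcal{P}/S)\simeq \NS_{\mathcal{P}}(-1)$ is built into the construction of $\varphi$: the $\mu_3$-action $\omega\cdot \lambda=\omega^{\vee}\circ \lambda\circ \omega^{-1}$ on $\NS_{\mathcal{P}}$ corresponds exactly to the induced pullback action of $\omega^*$ on line bundles, which matches the pullback action of $\mu_3$ on $\frak{h}^2(\mathcal{P}/S)$. For property (2), I would reduce to $\tilde{S}$ via Lemma \ref{lemma: descending morphisms of motives} and perform an eigenvalue analysis. By Proposition \ref{proposition: AVs up to isogeny same as h^1}, $\omega\in \mu_3$ acts on $\frak{h}^1(\mathcal{P}_{\tilde{S}}/\tilde{S})$ via the element $\omega=(-1+i)/2\in B$ (as in Lemma \ref{lemma: description mu3-action on NS(P)}). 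Over $\Q(\omega)$ this splits $\frak{h}^1$ into two direct summands on which $\omega$ acts as $\omega$ and $\omega^2$ respectively, of equal rank. Passing to $\frak{h}^2\simeq \wedge^2 \frak{h}^1$ produces $\mu_3$-eigenvalues $1,\omega,\omega^2$ with multiplicities $4,1,1$. Since by Lemma \ref{lemma: description mu3-action on NS(P)} the algebraic part $\NS_{\mathcal{P}}\simeq\mathrm{triv}\oplus\mathrm{std}$ contributes eigenvalues $1,\omega,\omega^2$ each exactly once, the complementary transcendental part must be pure $\mu_3$-trivial, giving (2).

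The main obstacle I anticipate is ensuring that Moonen's projector onto $\frak{h}^2_{\mathrm{tr}}$ is sufficiently canonical to be both $G$-equivariant (to descend from $\tilde{S}$ to $S$) and $\mu_3$-equivariant (so the descended splitting preserves the $\mu_3$-action). Convincingly deriving the equivariance requires unpacking Moonen's construction to show that his projectors are polynomial expressions in the endomorphisms (including polarizations), which are themselves fixed by the natural actions. A secondary technical point is that Moonen's results are typically stated for abelian varieties over a field, so I would need Proposition \ref{prop: abelian schemes normal bases} (and the integrality/normality of $S$) to transfer the statements to the relative setting on $S$.
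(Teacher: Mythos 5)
Your overall architecture is the same as the paper's (invoke Moonen's refined Chow--K\"unneth decomposition for $\mathcal{P}_{\tilde{S}}$, where the quaternionic action is visible, then descend to $S$ via Corollary \ref{corollary: descending direct summands of motives}), but the two justifications you give for the key equivariance statements do not work as written. For the $G$-stability, you argue that Moonen's projectors are polynomials in the endomorphisms, ``which are themselves fixed by the natural actions.'' They are not: by Theorem \ref{thm: Neron Severi action} the $G$-action on $\Hom^{\sym}(\mathcal{P}_{\tilde{S}},\mathcal{P}^{\vee}_{\tilde{S}})\otimes\Q$ is $\mathrm{triv}\oplus\mathrm{std}$, so $G$ acts nontrivially on $B=\End^0(\mathcal{P}_{\tilde{S}})$ and moves individual endomorphisms and symmetric line-bundle classes; a projector written as a polynomial in specific elements of $B$ has no reason to be $G$-invariant. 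The paper circumvents this by using the intrinsic characterization of Moonen's summands $R^{(2,0)}\oplus R^{(1,1)}$ of $\frak{h}^2(\mathcal{P}_{\tilde{S}}/\tilde{S})$: for every motive $M$, $\Hom(M,R^{(2,0)})$ is isotypic for a twist of the conjugation representation of $B^{\times}$ on trace-zero elements and $\Hom(M,R^{(1,1)})$ is isotypic for the norm character, and $G$ preserves the isomorphism classes of these two representations; this is what makes the decomposition (and Moonen's canonical isomorphism $\mathbb{L}\otimes(\Hom^{\sym}\otimes\Q)\simeq R^{(2,0)}$) descend.

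The more serious gap is your proof of property (2). The eigenvalue/multiplicity count on $\wedge^2\frak{h}^1$ versus $\NS_{\mathcal{P}}$ is a statement about realizations, not about the Chow motive: from the count you only learn that the $\omega$- and $\omega^2$-isotypic summands of $\frak{h}^2_{\mathrm{tr}}$ have vanishing cohomological realization, and a direct summand of a (relative) Chow motive with zero realization need not vanish without a nilpotence or Kimura finite-dimensionality input, which you do not invoke and which is not available off the shelf in $\mathsf{Mot}(S)$ for the summands you produce (even pinning down the ``rank'' of the eigen-summand $M$ of $\frak{h}^1$ motivically, e.g.\ $\wedge^3 M=0$, is not immediate). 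The paper's argument for (2) is purely motivic and avoids all of this: $\mu_3=\langle\omega\rangle\subset\Q(\omega)^{\times}\subset B^{\times}$ acts on $R^{(1,1)}$ through the norm character, and $\mathrm{Nm}(\omega)=1$, so the action is trivial before any realization is taken. Your argument for property (1) (matching the conjugation action on $\NS_{\mathcal{P}}$ with pullback on first Chern classes of symmetric bundles) is essentially the paper's use of Moonen's canonical isomorphism and is fine, but as it stands the proposal needs the isotypic characterization above both to descend the decomposition and to prove (2).
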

\begin{proof} 
    By Corollary \ref{corollary: descending direct summands of motives}, it suffices to find a $G$-stable decomposition of $\frak{h}^2(\mathcal{P}_{\tilde{S}}/\tilde{S})$ with properties analogous to $(1)$ and $(2)$.
    Theorem \ref{thm: Neron Severi action} shows that $B=\End^0(\mathcal{P}_{\tilde{S}})$ is a nonsplit quaternion algebra.
    By functoriality we obtain a homomorphism $B^{\mathrm{op}}\rightarrow \End(\frak{h}^1(\mathcal{P}_{\tilde{S}}/\tilde{S}))$, where $B^{\mathrm{op}}$ is the opposite algebra of $B$.
    We identify $B^{\mathrm{op}}$ with $B$ using the canonical involution on $B$.
    Moonen \cite[Theorem 7.2]{Moonen} has shown that this $B$-action on $\frak{h}^1(\mathcal{P}_{\tilde{S}}/\tilde{S})$ can be used to refine the Chow--Künneth decomposition of $\frak{h}(\mathcal{P}_{\tilde{S}}/\tilde{S})$.
    Our specific situation is explained in detail in \cite[\S8.2]{Moonen}: let
    \begin{align}\label{equation: moonen decomposition over Stilde}
    \frak{h}^2(\mathcal{P}_{\tilde{S}}/\tilde{S}) = R^{(2,0)} \oplus R^{(1,1)}
    \end{align}
    be the decomposition described in \cite[Bottom of p.\ 104]{Moonen}.
    There exist algebraic representations $\rho_1, \rho_2$ of $B^{\times}$ such that $\Hom(M,R^{(2,0)})$ is $\rho_1$-isotypic and $\Hom(M,R^{(1,1)})$ is $\rho_2$-isotypic for every motive $M$ in $\mathsf{Mot}(S)$. 
    The representation $\rho_1$ is a twist of the conjugation action of $B^{\times}$ on trace zero elements in $B$, and the representation $\rho_2$ is the norm map $\text{Nm}\colon B^{\times}\rightarrow \Q^{\times}$.
    The $\mu_3$-action on $\mathcal{P}_{\tilde{S}}$ is induced from restricting the $B^{\times}$-action to a quadratic field $\Q(\omega)^{\times}\subset B^{\times}$ and further restricting to its subgroup $\mu_3 = \langle \omega\rangle \subset \Q(\omega)^{\times}$.
    Since $\text{Nm}$ is trivial on $\mu_3$, it follows that $\mu_3$ acts trivially on $R^{(1,1)}$.
    By construction of $R^{(2,0)}$ \cite[\S6.4]{Moonen}, there is a canonical isomorphism $\alpha\colon \mathbb{L}\otimes (\Hom^{\sym}(\mathcal{P}_{\tilde{S}}, \mathcal{P}_{\tilde{S}}^{\vee})\otimes \Q) \xrightarrow{\sim} R^{(2,0)}$.
    Since $G$ preserves the isomorphism classes of $\rho_1$ and $\rho_2$, the decomposition \eqref{equation: moonen decomposition over Stilde} is $G$-stable. 
    Therefore by Corollary \ref{corollary: descending direct summands of motives}, $R^{(2,0)}\oplus R^{(1,1)}$ descends to a decomposition $\frak{h}_{\mathrm{alg}}^2(\mathcal{P}/S)\oplus \frak{h}^2_{\mathrm{tr}}(\mathcal{P}/S)$.
    Since the isomorphism $\alpha$ is $G$-equivariant, it descends to an isomorphism $\NS_{\mathcal{P}}(-1) \simeq \frak{h}^2_{\mathrm{alg}}(\mathcal{P}/S)$, as desired.
\end{proof}

For the remainder of the paper we will fix a decomposition \eqref{equation: moonen decomposition h2}. 
 It is in fact unique, but we will not use this.

\begin{remark}
{\em
    Let $\bar{K}$ be the algebraic closure of the function field of $S$.
    Then the base change of the decomposition \eqref{equation: moonen decomposition h2} along $\Spec (\bar{K})\rightarrow S$ is the decomposition of $\frak{h}^2(\mathcal{P}_{\bar K})$ into its algebraic and transcendental part \cite{KahnMurrePedrini-transcendentalpartmotivesurface}, explaining our choice of notation.}
\end{remark}

\begin{corollary}\label{corollary: mu6-invs h^2(P) controlled by h^2alg}
      $(\frak{h}^2(\mathcal{P}/S)\otimes \frak{h}^1(\mathcal{E}/S))^{\mu_3}\simeq (\NS_{\mathcal{P}}\otimes \frak{h}^1(\mathcal{E}/S))^{\mu_3}(-1)$
\end{corollary}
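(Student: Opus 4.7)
The plan is to apply Proposition \ref{proposition: moonen decomposition h2} directly and reduce the claim to a vanishing statement about $\frak{h}^1(\mathcal{E}/S)^{\mu_3}$. First, I would tensor the $\mu_3$-equivariant decomposition $\frak{h}^2(\mathcal{P}/S) = \frak{h}_{\mathrm{alg}}^2(\mathcal{P}/S) \oplus \frak{h}^2_{\mathrm{tr}}(\mathcal{P}/S)$ with $\frak{h}^1(\mathcal{E}/S)$ and take $\mu_3$-invariants; since both operations respect direct sums, this yields
$$(\frak{h}^2(\mathcal{P}/S) \otimes \frak{h}^1(\mathcal{E}/S))^{\mu_3} = (\frak{h}^2_{\mathrm{alg}}(\mathcal{P}/S) \otimes \frak{h}^1(\mathcal{E}/S))^{\mu_3} \oplus (\frak{h}^2_{\mathrm{tr}}(\mathcal{P}/S) \otimes \frak{h}^1(\mathcal{E}/S))^{\mu_3}.$$
Using the isomorphism $\frak{h}_{\mathrm{alg}}^2(\mathcal{P}/S) \simeq \NS_{\mathcal{P}}(-1)$ from Proposition \ref{proposition: moonen decomposition h2}(1), the first summand is identified with $(\NS_{\mathcal{P}} \otimes \frak{h}^1(\mathcal{E}/S))^{\mu_3}(-1)$, which is precisely the right-hand side of the claim. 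Hence it suffices to show that the second summand vanishes.

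By Proposition \ref{proposition: moonen decomposition h2}(2), $\mu_3$ acts trivially on $\frak{h}^2_{\mathrm{tr}}(\mathcal{P}/S)$, so the $\mu_3$-action on the tensor product comes only from the second factor and the invariants factor as $\frak{h}^2_{\mathrm{tr}}(\mathcal{P}/S) \otimes \frak{h}^1(\mathcal{E}/S)^{\mu_3}$. The corollary therefore reduces to showing $\frak{h}^1(\mathcal{E}/S)^{\mu_3} = 0$.

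For this final step I would pull back to $\tilde S$ (where $\mu_3$ has three honest sections) and apply Proposition \ref{proposition: AVs up to isogeny same as h^1} to identify $\End(\frak{h}^1(\mathcal{E}_{\tilde S}/\tilde S))$ with $\End^0(\mathcal{E}_{\tilde S})$. The $\mu_3$-action on $\mathcal{E}$, induced by the $\mu_6$-action on $\mathcal{C}$ via the bielliptic quotient, descends on the affine model $y^3 = x^2 + ax + b$ to $\zeta \cdot (x, y) = (x, \zeta y)$, a nontrivial automorphism of order $3$. Since $\mathcal{E}$ has $j$-invariant $0$, the algebra $\End^0(\mathcal{E}_{\tilde S})$ contains $\Q(\omega)$, and a generator $\zeta$ of $\mu_3(\tilde S)$ maps to a primitive cube root of unity $\omega \in \Q(\omega)$. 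The idempotent cutting out the $\mu_3$-fixed summand is then the image of $\tfrac{1}{3}(1 + \omega + \omega^2) = 0$, so $\frak{h}^1(\mathcal{E}_{\tilde S}/\tilde S)^{\mu_3} = 0$, which descends via Corollary \ref{corollary: descending direct summands of motives} to $\frak{h}^1(\mathcal{E}/S)^{\mu_3} = 0$.

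The main obstacle is rigorously identifying the image of a generator $\zeta \in \mu_3(\tilde S)$ in $\End^0(\mathcal{E}_{\tilde S})$ as a primitive cube root of unity rather than the identity. This amounts to tracking the $\mu_6$-action through the bielliptic quotient $\mathcal{C} \to \mathcal{E}$ and the coordinate change into Weierstrass form, a direct but slightly delicate verification once one is careful about which power of $\zeta$ acts on $y$.
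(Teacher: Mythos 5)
Your proposal is correct and follows essentially the same route as the paper: split off $\frak{h}^2_{\mathrm{tr}}(\mathcal{P}/S)$ using Proposition \ref{proposition: moonen decomposition h2}, kill the transcendental summand via $\frak{h}^1(\mathcal{E}/S)^{\mu_3}=0$, and identify the algebraic summand with $(\NS_{\mathcal{P}}\otimes \frak{h}^1(\mathcal{E}/S))^{\mu_3}(-1)$. The only difference is that the paper disposes of $\frak{h}^1(\mathcal{E}/S)^{\mu_3}=0$ in one line ("the $\mu_3$-action on $\mathcal{E}$ is faithful"), whereas you spell out the justification via Proposition \ref{proposition: AVs up to isogeny same as h^1} and the vanishing of $1+\omega+\omega^2$ in $\End^0(\mathcal{E}_{\tilde S})$, which is a valid expansion of the same idea.
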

\begin{proof}
    By Proposition \ref{proposition: moonen decomposition h2}, $\mu_3$ acts trivially on $\frak{h}^2_{\mathrm{tr}}(\mathcal{P}/S)$.
    Since the $\mu_3$-action on $\mathcal{E}$ is faithful, $\frak{h}^1(\mathcal{E}/S)^{\mu_3}=0$.
    We conclude that $(\frak{h}^2_{\mathrm{tr}}(\mathcal{P}/S)\otimes \frak{h}^1(\mathcal{E}/S)))^{\mu_3}=0$ and so $(\frak{h}^2(\mathcal{P}/S)\otimes \frak{h}^1(\mathcal{E}/S))^{\mu_3} =(\frak{h}_{\mathrm{alg}}^2(\mathcal{P}/S)\otimes \frak{h}^1(\mathcal{E}/S))^{\mu_3}  \simeq (\NS_{\mathcal{P}}\otimes \frak{h}^1(\mathcal{E}/S))^{\mu_3}(-1)$, again by Proposition \ref{proposition: moonen decomposition h2}.
\end{proof}

To analyze the piece $\NS_\mathcal{P}\otimes \frak{h}^1(\mathcal{E}/S)$, we define and study an auxiliary abelian scheme $\mathcal{E}\otimes \mathrm{std}$.
Recall from \S\ref{subsec: galois action NS group} the finite \'etale cover $\tilde{S}\rightarrow S$ with Galois group $G \simeq D_6$ and its two-dimensional representation $\mathrm{std}\colon G\rightarrow \GL_2(\Q)$, which in fact lands in $\GL_2(\Z)$.
Let $\mathcal{E} \otimes \mathrm{std}$ be the (up to isomorphism) unique abelian scheme over $S$ that is isomorphic to $\mathcal{E}^2$ over $\tilde{S}$ with descent data given by $\mathrm{std}\colon G\rightarrow \GL_2(\Z)\subset\End(\mathcal{E}^2)$. (This is the analogue of the construction of \cite{MazurRubinSilverberg} in the relative setting.)
%It is easy to see this is an algebraic space, but every abelian algebraic space is a scheme, see Faltings--Chai
\begin{lemma}\label{lemma: E otimes std is isogenous to two elliptic curves}
    There is an isogeny $\mathcal{E}\otimes \mathrm{std}\sim \mathcal{E}^{\Delta} \times \mathcal{E}^{\Delta^{-1}}$.
\end{lemma}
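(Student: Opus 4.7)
The plan is to compare, after base change to $\tilde S$, the descent data for the two abelian schemes and show they give the same class in $H^1(G,\GL_2(F))$, where $F=\End^0(\mathcal{E}_{\tilde S})=\Q(\omega)$ and $G\simeq D_6$ acts on $F$ through its quotient $G\twoheadrightarrow \Gal(F/\Q)$. Both $\mathcal{E}\otimes \mathrm{std}$ and $\mathcal{E}^\Delta\times \mathcal{E}^{\Delta^{-1}}$ become non-canonically isomorphic to $\mathcal{E}_{\tilde S}^2$ over $\tilde S$, so an isogeny between them over $S$ amounts to a matrix $M\in \GL_2(F)$ (viewed inside $\End^0(\mathcal{E}_{\tilde S}^2)^{\times}$) intertwining the two descent cocycles; equivalently, identifying their cohomology classes.

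For the concrete identification, fix a sixth root $\varepsilon\in \O(\tilde S)^{\times}$ of $\Delta$ and use $(x,y)\mapsto (\varepsilon^{\mp 2}x,\varepsilon^{\mp 3}y)$ to identify $\mathcal{E}^{\Delta^{\pm 1}}_{\tilde S}\simeq \mathcal{E}_{\tilde S}$. A direct calculation shows that the descent cocycle for $\mathcal{E}^\Delta\times \mathcal{E}^{\Delta^{-1}}$ becomes $c_1(g)=\mathrm{diag}(\chi(g),\chi(g)^{-1})$, where $\chi\colon G\to \mu_6(\tilde S)\subset F^{\times}$ is the Kummer cocycle $g\mapsto g(\varepsilon)/\varepsilon$ and $\mu_6$ embeds into $F^{\times}$ via its natural action $\zeta\cdot (x,y)=(\zeta^2 x,\zeta^3 y)$ on $\mathcal{E}$. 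For $\mathcal{E}\otimes \mathrm{std}$, the descent cocycle is by construction $c_2(g)=\mathrm{std}(g)\in \GL_2(\Z)\subset \GL_2(F)$. The lemma thus reduces to finding $M\in \GL_2(F)$ with $c_2(g)=M^{-1}c_1(g)g(M)$ for all $g\in G$.

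To construct $M$, first pick $M_0\in \GL_2(F)$ diagonalizing $\mathrm{std}(\tau)$ to $\mathrm{diag}(\zeta_6,\zeta_6^{-1})$, where $\zeta_6=1+\omega$ is the primitive sixth root of unity; these are the eigenvalues of any order-six rotation in the standard representation of $D_6$. Since $\tau$ fixes $F$, the cocycle condition at $g=\tau$ is automatic. The dihedral relation $\sigma\tau\sigma^{-1}=\tau^{-1}$, combined with the distinctness of $\zeta_6,\zeta_6^{-1}$, forces $M_0^{-1}\mathrm{std}(\sigma)\sigma(M_0)$ to be diagonal in $\GL_2(F)$, say equal to $\mathrm{diag}(a,b)$; the cocycle identity $\tilde c(\sigma)\sigma(\tilde c(\sigma))=I$ then gives $N_{F/\Q}(a)=N_{F/\Q}(b)=1$. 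By Hilbert 90 we obtain $d_1,d_2\in F^{\times}$ with $d_i/\sigma(d_i)=a,b$, so that $M:=M_0\cdot \mathrm{diag}(d_1,d_2)$ satisfies the cocycle equation at both $\tau$ and $\sigma$, hence at all of $G$. After clearing denominators, $M$ defines a $G$-equivariant isogeny $\mathcal{E}_{\tilde S}^2\to \mathcal{E}_{\tilde S}^2$ that descends to the required isogeny over $S$. The main effort is the short explicit verification of the diagonality and norm-one properties above, which is a representation-theoretic consequence of the structure of $\mathrm{std}$ and the Galois action of $G$ on $F$.
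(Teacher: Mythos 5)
Your argument is correct in substance, and it is a genuinely different route from the paper's. The paper reduces to the generic fiber (via the full-faithfulness statement Proposition \ref{prop: abelian schemes normal bases}), then decomposes the Weil restriction $\Res^{\tilde K}_K E \sim \bigl(\prod_{i=0}^5 E^{\Delta^i}\bigr)^2$ using the six characters of the Kummer group $H$ together with the fact that a $j$-invariant $0$ curve is isogenous to its own $(-3)$-quadratic twist, and then reads off the two order-$6$ characters occurring in $\mathrm{std}|_H$. You instead stay with descent data over $\tilde S$: both schemes are forms of $\mathcal{E}^2_{\tilde S}$, and you exhibit an explicit element of $\GL_2(F)$, $F=\Q(\omega)\subset \End^0(\mathcal{E}_{\tilde S})$, intertwining the two cocycles, using diagonalization of $\mathrm{std}(\tau)$, the dihedral relation, and Hilbert 90; clearing denominators and invoking \'etale descent for homomorphisms of abelian schemes then produces the isogeny over $S$. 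Your method is more hands-on (it produces the intertwiner), at the price of matrix and Galois-action bookkeeping that the paper's character-theoretic argument avoids.

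Two points should be tightened. First, the step you call ``automatic'' at $g=\tau$ is exactly where the content lies: you need that $c_1(\tau)=\mathrm{diag}(\chi(\tau),\chi(\tau)^{-1})$ is conjugate in $\GL_2(F)$ to $\mathrm{std}(\tau)$, i.e.\ that $\chi(\tau)$ is a \emph{primitive} sixth root of unity (true, since $\varepsilon$ is a sixth root of $\Delta$ and $\tau$ generates the Kummer group) and that $\mathrm{std}$ is the faithful two-dimensional representation with eigenvalues $\zeta_6^{\pm 1}$; this is precisely what singles out $\mathcal{E}^{\Delta}\times\mathcal{E}^{\Delta^{-1}}$ rather than, say, $\mathcal{E}^{\Delta^2}\times\mathcal{E}^{\Delta^{-2}}$, so it must be stated. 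Second, your conventions are slightly scrambled: with the normalization $c_2(g)=M^{-1}c_1(g)g(M)$ you need $M$ with $M\,\mathrm{std}(\tau)\,M^{-1}=c_1(\tau)$, so the diagonal Hilbert--90 correction multiplies on the side commuting with $c_1(\tau)$ (not as written), and it is cleanest to choose $\sigma$ to be the reflection fixing $\varepsilon$ so that $c_1(\sigma)=1$. With these choices the verification goes through as you intend: the dihedral relation together with the fact that $\sigma$ inverts roots of unity in $F$ (both inversions are needed) forces the discrepancy at $\sigma$ to be diagonal, the order-two cocycle identity gives the norm-one condition, Hilbert 90 solves it, and checking the intertwining identity on the generators $\tau,\sigma$ suffices because both $c_1$ and $\mathrm{std}$ are genuine cocycles on $G$.
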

\begin{proof}
    Let $K = \Q(a,b)$ be the function field of $S$ and denote the base change of $\mathcal{E}$ to $K$ by $E$.
    Proposition \ref{prop: abelian schemes normal bases} shows that it suffices to prove the lemma over $K$.
    Let $\tilde{K} = K(\omega,\sqrt[6]{\Delta})$ denote the function field of $\tilde{S}$. 
    For $d\in K^{\times}$, write $E^d:y^2 = x^3 + 16(a^2-4b)d$ for the sextic twist of $E$ by $d$.
    Then $E \otimes \mathrm{std}$ is an isogeny factor of \[\Res^{\tilde K}_KE \sim \Res^{K(\sqrt[6]{\Delta})}_K( E^2) \sim \left(\Res^{K(\sqrt[6]{\Delta})}_KE\right)^2 \sim \left(\prod_{i = 0}^5 E^{\Delta^i}\right)^2. \] 
    Indeed: the first isogeny follows from the fact that $E$ is isogenous to its own $(-3)$-quadratic twist (this being true for any $j$-invariant $0$ elliptic curve), the second is a formal property of restriction of scalars, and the third is the isogeny decomposition that corresponds (by base change) to the six characters of $H = \Gal(\tilde K/K(\omega))$. Since $\mathrm{std}|_H$ is the sum of the two characters of order $6$, we must have 
    \[E \otimes \mathrm{std} \sim E^\Delta \times E^{\Delta^5} \simeq E^\Delta \times E^{\Delta^{-1}},\]
    as claimed.
\end{proof}

\begin{lemma}\label{lemma: mu6-invs NS gives Delta twist}
    $(\NS_\mathcal{P}\otimes \frak{h}^1(\mathcal{E}/S))^{\mu_3} \simeq \frak{h}^1(\mathcal{E}^{\Delta}/S)$.
\end{lemma}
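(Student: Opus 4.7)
The plan is to decompose $\NS_\mathcal{P}$ along the $G$-isotypic components of Theorem \ref{thm: Neron Severi action}, reduce to the $\mathrm{std}$ isotypic part, and then invoke Lemma \ref{lemma: E otimes std is isogenous to two elliptic curves} to match the surviving summand with $\frak{h}^1(\mathcal{E}^\Delta/S)$. First I will write $\NS_\mathcal{P} = \NS_\mathcal{P}^{\mathrm{triv}} \oplus \NS_\mathcal{P}^{\mathrm{std}}$ via Corollary \ref{corollary: descending direct summands of motives} applied to Theorem \ref{thm: Neron Severi action}. By Lemma \ref{lemma: description mu3-action on NS(P)} the $\mu_3$-action is trivial on $\NS_\mathcal{P}^{\mathrm{triv}}$, whereas the natural $\mu_3 \subset \mu_6 = \Aut(\mathcal{E}/S)$ on the $j=0$ curve $\mathcal{E}$ acts on $\frak{h}^1(\mathcal{E}) \otimes \Qbar$ as the sum of the two nontrivial characters of $\mu_3$, so $\frak{h}^1(\mathcal{E}/S)^{\mu_3} = 0$; together this gives $(\NS_\mathcal{P}^{\mathrm{triv}} \otimes \frak{h}^1(\mathcal{E}/S))^{\mu_3} = 0$. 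The problem therefore reduces to identifying $(\NS_\mathcal{P}^{\mathrm{std}} \otimes \frak{h}^1(\mathcal{E}/S))^{\mu_3}$ with $\frak{h}^1(\mathcal{E}^\Delta/S)$.

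For this I will use $\mathcal{E} \otimes \mathrm{std}$ as a bridge, establishing a $\mu_3$-equivariant isomorphism $\NS_\mathcal{P}^{\mathrm{std}} \otimes \frak{h}^1(\mathcal{E}/S) \simeq \frak{h}^1(\mathcal{E} \otimes \mathrm{std}/S)$ in $\mathsf{Mot}(S)$. Both sides become $\frak{h}^1(\mathcal{E}/\tilde{S})^{\oplus 2}$ after pullback to $\tilde{S}$, with identical $G$-descent data given by $\mathrm{std} \colon G \to \GL_2(\Q)$, so the isomorphism will follow from the construction of $\mathcal{E} \otimes \mathrm{std}$ preceding Lemma \ref{lemma: E otimes std is isogenous to two elliptic curves} combined with Lemma \ref{lemma: descending morphisms of motives}. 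Under this identification, the diagonal $\mu_3$-action on the tensor product corresponds to the endomorphism of $\mathcal{E} \otimes \mathrm{std}$ obtained by composing the componentwise natural $\mu_3$-automorphism of $\mathcal{E}$ with the automorphism coming from the $\mu_3$-action on $\mathrm{std}$ of Lemma \ref{lemma: description mu3-action on NS(P)}, pushed into $\End^0(\mathcal{E} \otimes \mathrm{std})$.

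Finally, Lemma \ref{lemma: E otimes std is isogenous to two elliptic curves} gives $\frak{h}^1(\mathcal{E} \otimes \mathrm{std}/S) \simeq \frak{h}^1(\mathcal{E}^\Delta/S) \oplus \frak{h}^1(\mathcal{E}^{\Delta^{-1}}/S)$, so it will remain to show that the combined $\mu_3$-action described above has invariants equal to the $\frak{h}^1(\mathcal{E}^\Delta/S)$ summand. To check this I will base change to $\tilde{S} \times \Spec \Q(\omega)$ and simultaneously diagonalise the two commuting $\mu_3$- and $H$-actions on $\mathrm{std} \otimes \Q(\omega)$. The composed $\mu_3$-action will act on the four joint eigenlines of $\mathrm{std} \otimes \frak{h}^1(\mathcal{E}) \otimes \Q(\omega)$ with eigenvalues $\chi^2, 1, 1, \chi^{-2}$, and the invariant part will be the two-dimensional ``antidiagonal'' subspace pairing the $\chi$-eigenspace on one side with the $\chi^{-1}$-eigenspace on the other. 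Using the explicit basis of $\mathrm{std}$ from the proof of Lemma \ref{lemma: description mu3-action on NS(P)} to pair $\mu_3$-characters with $H$-characters, this antidiagonal piece will descend via Corollary \ref{corollary: descending direct summands of motives} to the $\frak{h}^1(\mathcal{E}^\Delta/S)$ summand. I expect the only delicate step to be this last bookkeeping of characters, since the rest of the argument is formal Galois descent once the decomposition of Theorem \ref{thm: Neron Severi action} is in hand.
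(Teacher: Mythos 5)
Your reduction is the same as the paper's: kill the $\mathrm{triv}$ part using $\frak{h}^1(\mathcal{E}/S)^{\mu_3}=0$, identify $\mathrm{std}\otimes\frak{h}^1(\mathcal{E}/S)$ with $\frak{h}^1(\mathcal{E}\otimes\mathrm{std}/S)$, and use Lemma \ref{lemma: E otimes std is isogenous to two elliptic curves} to see that the $\mu_3$-invariants must be $\frak{h}^1$ of either $\mathcal{E}^{\Delta}$ or $\mathcal{E}^{\Delta^{-1}}$. But the step you defer to "bookkeeping of characters" is precisely the content of the lemma, and as proposed it has a genuine gap: deciding which of the two sextic twists survives requires knowing which $H$-eigenline of $\mathrm{std}\otimes\Qbar$ (for $H=\Gal(\tilde{K}/K(\omega))$, normalized by its action on the chosen $\sqrt[6]{\Delta}$) carries which nontrivial $\mu_3$-character. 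That joint normalization is not contained in the inputs you cite: Theorem \ref{thm: Neron Severi action} only gives the abstract $G$-isomorphism class of $\NS_{\mathcal{P}}$, and Lemma \ref{lemma: description mu3-action on NS(P)} records the $\mu_3$-action on an explicit quaternionic basis without recording the $G$-action on that same basis relative to $\varepsilon=\sqrt[6]{\Delta}$. Moreover, since $\mathrm{std}$ is absolutely irreducible, its $G$-equivariant endomorphisms are scalars, so the two possible pairings of $\mu_3$-characters with $H$-characters give genuinely inequivalent equivariant structures; no abstract argument can distinguish them, and asserting that the "antidiagonal" piece descends to the $\frak{h}^1(\mathcal{E}^{\Delta}/S)$ summand rather than the $\frak{h}^1(\mathcal{E}^{\Delta^{-1}}/S)$ summand is assuming the conclusion. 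A secondary technical point: the "four joint eigenlines" of $\mathrm{std}\otimes\frak{h}^1(\mathcal{E})$ do not exist after merely base changing the scheme to $\tilde{S}\times\Spec\Q(\omega)$; splitting $\frak{h}^1(\mathcal{E})$ into $\mu_3$-eigenlines requires enlarging the coefficient field of the motivic category (or passing to realizations), which the paper's $\Q$-coefficient framework does not provide.

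The paper sidesteps exactly this difficulty, explicitly calling its route "rather indirect": it forms the abelian scheme $\mathcal{A}=(\mathcal{E}\otimes\mathrm{std})^{\mu_3,\circ}$, notes by Lemma \ref{lemma: E otimes std is isogenous to two elliptic curves} and complete reducibility that $\mathcal{A}\sim\mathcal{E}^{\Delta}$ or $\mathcal{E}^{\Delta^{-1}}$, and then rules out $\mathcal{E}^{\Delta^{-1}}$ using the compatibility of $\frak{h}^3(\mathcal{J}/S)^{\mu_6}$ with the rescaling $\lambda_d\colon(a,b)\mapsto(da,d^2b)$ (a sextic-twist equivariance descended via Lemma \ref{lemma: descending morphisms of motives}), which would force $\mathcal{E}^{\Delta^{-1}}$ and $\mathcal{E}^{d^{-2}\Delta^{-1}}$ to be $S$-isogenous for all units $d$; specializing at $(a,b)=(1,1)$, $d=3^{-1}$ produces the non-isogenous curves $y^2=x^3+1$ and $y^2=x^3+9$, a contradiction. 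If you want to salvage your direct approach, you would have to go back to the explicit monodromy computation behind Theorem \ref{thm: Neron Severi action} in \cite{LagaShnidman} and carry out the normalization of the $G$-action on the quaternion order relative to $\sqrt[6]{\Delta}$ yourself; otherwise you need an independence argument of the paper's kind to pin down the twist.
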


\begin{proof}
Use Theorem \ref{thm: Neron Severi action} to decompose the $G$-representation $\NS_{\mathcal{P}}$ as $\mathrm{triv}\oplus \mathrm{std}$. 
By Lemma \ref{lemma: description mu3-action on NS(P)}, $\mu_3$ acts trivially on $\mathrm{triv}$ and as a direct sum of two nontrivial characters on $\mathrm{std}\otimes \Qbar$.
The $\mu_3$-actions on $\mathcal{E}$ and $\mathrm{std}$ combine to a $\mu_3$-action on $\mathcal{E}\otimes \mathrm{std}$.
Write $\mathcal{A} := (\mathcal{E}\otimes \mathrm{std})^{\mu_3, \circ }$.
We then have 
\begin{equation}\label{eq: fixed point computation}
(\NS_\mathcal{P}\otimes \frak{h}^1(\mathcal{E}/S))^{\mu_3} \simeq ( \mathrm{std}\otimes \frak{h}^1(\mathcal{E}/S))^{\mu_3} \simeq (\frak{h}^1((\mathcal{E}\otimes \mathrm{std})/S))^{\mu_3}\simeq  \frak{h}^1(\mathcal{A}/S),
\end{equation}
where the first isomorphism follows from the fact that $\frak{h}^1(\mathcal{E}/S)^{\mu_3}=0$, and the last two follow from Lemma \ref{lemma: descending morphisms of motives} and Proposition \ref{proposition: AVs up to isogeny same as h^1}.
To prove the lemma, it remains to show that the abelian scheme $\mathcal{A}$ is isogenous to $\mathcal{E}^\Delta$. 

Let $\mathcal{A}'$ be the quotient abelian scheme $(\mathcal{E} \otimes \mathrm{std})/ \mathcal{A}$. 
A tangent space calculation (using the $\mu_3$-action on $\mathrm{std}$ and $\mathcal{E}$) shows that $\mathcal{A}\rightarrow S$ has relative dimension $1$, hence so does the quotient $\mathcal{A}' \rightarrow S$.
Given $d\in \mathcal{O}(S)^{\times}$ write $\mathcal{E}^d\colon y^2 = x^3 + 16(a^2-4b)d$ for the sextic twist of $\mathcal{E}$ by $d$.
By complete reducibility and Lemma \ref{lemma: E otimes std is isogenous to two elliptic curves}, $\mathcal{A}$ is isogenous to either
$\mathcal{E}^{\Delta} \colon y^2 = x^3 + 4b(a^2 - 4b)^2$ or $\mathcal{E}^{\Delta^{-1}} \colon y^2 = x^3 + b^{-1}$. 
We will exclude the latter possibility using a rather indirect argument.

Fix a unit $d\in \mathcal{O}(S)^{\times}$, consider the isomorphism $\lambda_d\colon S\rightarrow S$ mapping $(a,b)$ to $(da,d^2b)$ and let $\mathcal{J}^d = \lambda_d^*\mathcal{J}$.
Then $\mathcal{J}$ and $\mathcal{J}^d$ are sextic twists in the sense that they become isomorphic over the $\mu_6$-cover $f_d\colon S_d \rightarrow S$ given by adjoining a sixth root $\delta$ of $d$.
There exists an isomorphism $f_d^*\mathcal{J} \xrightarrow{\sim} f_d^*\mathcal{J}^d$ of abelian schemes over $S_d$ induced by the isomorphism $(x,y)\mapsto (\delta^3x,\delta^4y)$ between $\mathcal{C}$ and $\lambda_d^*\mathcal{C}$.
This in turn induces an isomorphism $\frak{h}^3(f_d^*\mathcal{J}^d/S_d)\xrightarrow{\sim}\frak{h}^3(f_d^*\mathcal{J}/S_d)$.
Lemma \ref{lemma: descending morphisms of motives} shows that the induced morphism on $\mu_6$-fixed points descends to an isomorphism $\frak{h}^3(\mathcal{J}/S)^{\mu_6}\simeq \frak{h}^3(\mathcal{J}^d/S)^{\mu_6}$ over $S$.

Now assume, for the sake of contradiction, that $\mathcal{A}\sim\mathcal{E}^{\Delta^{-1}}$.
Then $\frak{h}^3(\mathcal{J}/S)^{\mu_6} \simeq \frak{h}^1(\mathcal{E}^{\Delta^{-1}}/S)(-1)$ by Lemma \ref{lemma: mu6-invs h^3 controlled by h^2(P)}, Corollary \ref{corollary: mu6-invs h^2(P) controlled by h^2alg} and \eqref{eq: fixed point computation}. Pulling back this isomorphism along $\lambda_d$ and using the fact that $\lambda^*_d \mathcal{E}^{\Delta^{-1}} \simeq \mathcal{E}^{d^{-2}\Delta^{-1}}$, we get an isomorphism $\frak{h}^3(\mathcal{J}^d/S)^{\mu_6} \simeq \frak{h}^1(\mathcal{E}^{d^{-2}\Delta^{-1}}/S)(-1)$.
Combining the last three isomorphisms shows that $\frak{h}^1(\mathcal{E}^{\Delta^{-1}}/S) \simeq \frak{h}^1(\mathcal{E}^{d^{-2}\cdot \Delta^{-1}}/S)$. 
 By Proposition \ref{proposition: AVs up to isogeny same as h^1}, $\mathcal{E}^{\Delta^{-1}}$ and $\mathcal{E}^{d^{-2}\cdot \Delta^{-1}}$ are isogenous over $S$ for all such $d$.
 This is a contradiction, since setting $s= (a,b) = (1,1) \in S(\Q)$ and $d=3^{-1} \in \mathcal{O}(S)^{\times}$ produces two curves $\mathcal{E}^{\Delta^{-1}}_{(a,b)}\colon y^2 = x^3+1$ and $\mathcal{E}^{9\Delta^{-1}}_{(a,b)}\colon y^2 = x^3+9$ that are not isogenous over $\Q$ (by counting $\F_7$-points, for example).
\end{proof}

%Collecting everything, we get:
\begin{proof}[Proof of Theorem \ref{theorem: iso relative chow motives}]
    We have
    \begin{align*}
        \frak{h}^3(\mathcal{J}/S)^{\mu_6} = (\frak{h}^2(\mathcal{P}/S)\otimes \frak{h}^1(\mathcal{E}/S))^{\mu_3} \simeq (\NS_{\mathcal{P}}\otimes \frak{h}^1(\mathcal{E}/S))^{\mu_3}(-1) 
        \simeq \frak{h}^1(\mathcal{E}^{\Delta}/S)(-1),
    \end{align*}
    using Lemma \ref{lemma: mu6-invs h^3 controlled by h^2(P)}, Corollary \ref{corollary: mu6-invs h^2(P) controlled by h^2alg} and Lemma \ref{lemma: mu6-invs NS gives Delta twist} respectively.
\end{proof}

\subsection{Chow groups}\label{subsec: chow groups}

Our next goal is to translate the isomorphism of Theorem \ref{theorem: iso relative chow motives} into a concrete statement about Chow groups. 
We will compute $\CH^2$ of $\mathfrak{h}^3(\mathcal{J}/S)^{\mu_6}$ and $\mathfrak{h}^1(\mathcal{E}^{\Delta}/S)(-1)$ (in the sense of \S\ref{subsec: chow groups of motives}) below.
To this end we will use the Beauville decomposition of the Chow groups of an abelian variety \cite{Beauville-surlanneadechow}, extended by Deninger--Murre to the relative setting \cite[Theorem 2.19]{DeningerMurre}.

\begin{proposition}[Beauville decomposition]
    Let $A/S$ be an abelian scheme and $p\in \Z_{\geq 0}$.
    For $i\in \Z$ let 
    \[\CH^p_{(i)}(A; \Q) := \{ \alpha \in \CH^p(A;\Q) \mid (n)^*\alpha = n^{2p-i}\alpha \text{ for all }n\in \Z\}.
    \]
    Then $\CH^p_{(i)}(A;\Q)$ is nonzero for only finitely many $i$ and we have a direct sum decomposition
    \begin{align}\label{equation: beauville decomposition}
    \CH^p(A;\Q) = \bigoplus_{i\in \Z} \CH^p_{(i)}(A;\Q).    
    \end{align}
\end{proposition}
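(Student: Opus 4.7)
The plan is to deduce the Beauville decomposition directly from the canonical Chow--Künneth decomposition of Theorem \ref{theorem: deninger--Murre projectors} by applying the functor $\CH^p(-) = \Hom(\mathbb{L}^{\otimes p},-)$, and to identify the resulting summands with the simultaneous eigenspaces of the operators $\{(n)^*\}_{n \in \Z}$.

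First I would invoke Theorem \ref{theorem: deninger--Murre projectors} to obtain the decomposition $\frak{h}(A/S) = \bigoplus_{j=0}^{2g} \frak{h}^j(A/S)$, with associated mutually orthogonal projectors $\pi_j \in \Corr^0(A,A)$ summing to $[\Delta_{A/S}]$ and satisfying $[^t\Gamma_{(n)}] = \sum_j n^j \pi_j$ in $\End(\frak{h}(A/S))$ for every $n \in \Z$. Applying the functor $\Hom(\mathbb{L}^{\otimes p},-)$ then yields an additive decomposition
\[ \CH^p(A;\Q) = \bigoplus_{j=0}^{2g} \CH^p(\frak{h}^j(A/S)). \]

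The key step is to identify each summand $\CH^p(\frak{h}^j(A/S))$ with a Beauville piece. For this I would observe that, under the identification $\CH^p(A;\Q) = \Hom(\mathbb{L}^{\otimes p},\frak{h}(A/S))$, the pullback $(n)^*$ corresponds to post-composition by the morphism $[^t\Gamma_{(n)}]$; this is essentially the definition of the contravariant functor $\frak{h}\colon \mathsf{SmProj}(S) \rightarrow \mathsf{Mot}(S)$ at the level of Chow groups. Combined with the expansion $[^t\Gamma_{(n)}] = \sum_j n^j \pi_j$, this shows $(n)^*$ acts as the scalar $n^j$ on $\CH^p(\frak{h}^j(A/S))$, so $\CH^p(\frak{h}^j(A/S)) \subseteq \CH^p_{(2p-j)}(A;\Q)$. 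The reverse inclusion is a Vandermonde argument: writing $\alpha = \sum_j \alpha_j$ with $\alpha_j \in \CH^p(\frak{h}^j(A/S))$, the condition $(n)^*\alpha = n^{2p-i}\alpha$ for all $n \in \Z$ becomes $\sum_j (n^j - n^{2p-i}) \alpha_j = 0$, which forces $\alpha_j = 0$ whenever $j \neq 2p - i$.

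Together with the bound $0 \le j \le 2g$ from Theorem \ref{theorem: deninger--Murre projectors}, this identifies $\CH^p_{(i)}(A;\Q) = \CH^p(\frak{h}^{2p-i}(A/S))$ and forces vanishing outside the range $2p - 2g \le i \le 2p$, giving both claims. I expect the only mildly nontrivial point to be verifying that the categorical post-composition action on $\Hom(\mathbb{L}^{\otimes p},-)$ really computes the classical pullback on Chow groups; once this compatibility is in hand, everything else is a formal consequence of the Chow--Künneth decomposition.
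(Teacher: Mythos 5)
Your proof is correct, but note that the paper does not actually prove this proposition: it is quoted from Beauville \cite{Beauville-surlanneadechow} and, in the relative setting, from Deninger--Murre \cite[Theorem 2.19]{DeningerMurre}. What you do instead is deduce it formally from the canonical Chow--K\"unneth decomposition (Theorem \ref{theorem: deninger--Murre projectors}): apply the additive functor $\Hom(\mathbb{L}^{\otimes p},-)$ to the decomposition, identify $(n)^*$ with post-composition by $[{}^t\Gamma_{(n)}]$ (this compatibility is indeed standard, since $({}^t\Gamma_f)_* = f^*$ on Chow groups, and $(n)$ is a morphism of smooth projective $S$-schemes), and separate eigenvalues --- for which a single value such as $n=2$ already suffices, because $2^j = 2^{2p-i}$ forces $j = 2p-i$, so the full ``Vandermonde'' step is not even needed. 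This is a legitimate derivation within the logical structure of the paper, since Theorem \ref{theorem: deninger--Murre projectors} is quoted as a black box, and it has the merit of being short, purely formal, and of making explicit the identification $\CH^p_{(i)}(A;\Q) = \CH^p(\frak{h}^{2p-i}(A/S))$ that the paper uses later in the proof of Proposition \ref{prop: chow groups}. One caveat worth keeping in mind: in Deninger--Murre the logical order is the reverse of yours --- the eigenspace decomposition of the relative Chow groups (proved via Beauville's Fourier transform) comes first and is used to construct the projectors $\pi_i$ --- so your argument shows that the two statements are essentially equivalent given the motivic formalism, rather than providing an independent proof that could replace the cited one.
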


\begin{proposition}\label{prop: chow groups}
    Let $T$ be an adequate scheme and $T\rightarrow S$ a morphism. 
    Then there are isomorphisms:
    \begin{enumerate}
        \item $\CH^2( \frak{h}^1(\mathcal{E}^{\Delta}_T/T)(-1)) = \mathcal{E}^{\Delta}(T) \otimes \Q$.
        \item $\CH^2(\frak{h}^3(\mathcal{J}_T/T)^{\mu_6}) = \CH^2_{(1)}(\mathcal{J}_T;\Q)^{\mu_6}$.
    \end{enumerate}
    Moreover if $T'\rightarrow S$ is another such morphism and if $f\colon T'\rightarrow T$ is an $S$-morphism that admits pullbacks (see \S\ref{subsec: base changing motives}) then the above isomorphisms are compatible with base change along $f$.
\end{proposition}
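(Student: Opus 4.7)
The plan is to unwind the definitions of $\CH^p(M)$ and the Tate twist $(-1)$ and to invoke the Beauville/Deninger--Murre decomposition \eqref{equation: beauville decomposition}. The key general fact underlying both parts is that for any abelian scheme $A \to T$ and integers $p, i \geq 0$, one has a canonical identification
\[
\CH^p(\frak{h}^i(A/T)) = \CH^p_{(2p-i)}(A_T; \Q).
\]
Indeed, $\CH^p(\frak{h}^i(A/T))$ is the direct summand of $\CH^p(A_T;\Q) = \CH^p(\frak{h}(A/T))$ cut out by the Deninger--Murre projector, on which $(n)^*$ acts as $n^i$ by Theorem \ref{theorem: deninger--Murre projectors}; by the definition of the Beauville grading this summand is precisely $\CH^p_{(2p-i)}$.

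For part (1), I would first note the elementary identity $\CH^p(M(n)) = \CH^{p+n}(M)$, obtained by rewriting $\Hom(\mathbb{L}^{\otimes p}, M \otimes \mathbb{L}^{\otimes(-n)}) = \Hom(\mathbb{L}^{\otimes(p+n)}, M)$ using that $\mathbb{L} = \mathbf{1}(-1)$ is invertible. Combining this with the displayed identification at $(p,i) = (1,1)$ gives
\[
\CH^2\bigl(\frak{h}^1(\mathcal{E}^\Delta_T/T)(-1)\bigr) = \CH^1\bigl(\frak{h}^1(\mathcal{E}^\Delta_T/T)\bigr) = \CH^1_{(1)}(\mathcal{E}^\Delta_T;\Q).
\]
I would then invoke the standard identification $\CH^1_{(1)}(A_T;\Q) = \Pic^0(A/T)\otimes \Q = A^\vee(T)\otimes \Q$ for any abelian scheme $A/T$: by the theorem of the cube these are exactly the line bundle classes on which $(n)^*$ acts as $n$. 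Specializing to the elliptic scheme $\mathcal{E}^\Delta$ and using its canonical self-duality via the origin section yields $\mathcal{E}^\Delta(T)\otimes \Q$, as required.

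For part (2), the displayed identification with $(p,i) = (2,3)$ gives $\CH^2(\frak{h}^3(\mathcal{J}_T/T)) = \CH^2_{(1)}(\mathcal{J}_T;\Q)$. The $\mu_6$-action on $\mathcal{J}_T$ is through automorphisms of the abelian scheme, so it commutes with multiplication by $n$ and therefore preserves both the Chow--K\"unneth and the Beauville decompositions. Consequently, the $\mu_6$-averaging projector from \S\ref{subsec: group actions on motives} commutes with the functor $\Hom(\mathbb{L}^{\otimes 2}, -)$, yielding
\[
\CH^2\bigl(\frak{h}^3(\mathcal{J}_T/T)^{\mu_6}\bigr) = \CH^2_{(1)}(\mathcal{J}_T;\Q)^{\mu_6}.
\]

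Finally, base change compatibility along an $S$-morphism $f \colon T' \to T$ admitting pullbacks is essentially automatic: by construction in \S\ref{subsec: base changing motives}, $f^*$ on motives is computed via pullback of correspondences, which is compatible with pullback on Chow groups of the underlying schemes; and the projectors used throughout (Chow--K\"unneth, Beauville, and $\mu_6$-averaging) are all expressed in terms of $[^t\Gamma_{(n)}]$ and the $\mu_6$-automorphisms, both of which commute with $f^*$. The one step that is not purely formal is the identification $\CH^1_{(1)}(A/T;\Q) = A^\vee(T) \otimes \Q$ in the relative setting, which I expect to be the main (mild) obstacle; but this follows from the standard description of $\Pic^0(A/T)$ as the dual abelian scheme $A^\vee/T$ together with the theorem of the cube.
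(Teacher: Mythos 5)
Your argument is correct, and parts (2) and the base-change compatibility match the paper's proof closely. For part (1), however, you take a mildly different route than the paper. You pass from $\CH^1(\frak h^1(\mathcal{E}^\Delta_T/T))$ to $\CH^1_{(1)}(\mathcal{E}^\Delta_T;\Q)$ via the general identity $\CH^p(\frak h^i(A/T))=\CH^p_{(2p-i)}(A_T;\Q)$ and then invoke the identification $\CH^1_{(1)}(A_T;\Q)=A^\vee(T)\otimes\Q$. The paper instead works directly with the explicit projector $[\Delta_{X/T}]-[\Gamma_e]-[^t\Gamma_e]$ defining $\frak h^1$ of a relative elliptic curve, and unwinds the definition of $\CH^1$ of that summand to show it consists precisely of $\Q$-classes of fiberwise degree-zero line bundles rigidified along the zero section, i.e.\ $\mathbf{Pic}^0_{X/T}(T)\otimes\Q$, and then uses self-duality $X\simeq\mathbf{Pic}^0_{X/T}$. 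In effect the paper proves, rather than cites, the fact you flag as the ``main (mild) obstacle,'' and its hands-on calculation makes the base-change compatibility transparent (one is just pulling back rigidified line bundles). Your route is equally valid but shifts the burden onto the statement that the weight-$1$ Beauville piece of $\CH^1$ of an abelian scheme over an adequate base is the group of sections of the dual, which does require the decomposition $\CH^1(A_T;\Q)=\CH^1_{(0)}\oplus\CH^1_{(1)}\oplus\CH^1_{(2)}$ with the $(2)$-piece accounting for pullbacks from $T$; your one-line appeal to the theorem of the cube gives only the inclusion $A^\vee(T)\otimes\Q\subset\CH^1_{(1)}$, and the reverse containment deserves a sentence. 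This is a cosmetic gap rather than a genuine error.
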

\begin{proof}
    \begin{enumerate}
        \item We have $\CH^2( \frak{h}^1(\mathcal{E}^{\Delta}_T/T)(-1))=\CH^1( \frak{h}^1(\mathcal{E}^{\Delta}_T/T))$. We will prove the stronger statement that if $X\rightarrow T$ is an abelian scheme of relative dimension $1$ then $\CH^1(\frak{h}^1(X/T)) = X(T)\otimes \Q$.
        This is the relative version of a classical curve computation \cite[Theorem 2.7.2]{MurreNagelPeters}; we briefly sketch the details.
        Write $\Gamma_e\subset X\times_T X$ for the graph of the zero section $e\colon T\rightarrow X$.
        Then $\frak{h}^1(X/T)=(X/T, [\Delta_{X/T}]-[\Gamma_e]-[^t\Gamma_e],0)$.
        Using the equivalence between Cartier divisors and line bundles and unwinding the definition of $\CH^1(\frak{h}^1(X/T))$, we see that it is exactly the $\Q$-tensor product of the set of isomorphism classes of line bundles on $X$, fiberwise of degree zero and rigidified along the zero section. 
        In other words, $\CH^1(\frak{h}^1(X/T)) = \mathbf{Pic}^0_{X/T}(T)\otimes \Q$, where $\mathbf{Pic}^0_{X/T}$ denotes the identity component of the Picard functor \cite[\S8.1, Proposition 4]{BLR-neronmodels}.
        Since $X$ is a relative elliptic curve, the map $P\mapsto [P]-[e]$ induces an isomorphism $X\simeq  \mathbf{Pic}^0_{X/T}$.
        \item By definition of the Chow--Künneth decomposition of Theorem \ref{theorem: deninger--Murre projectors} and the Beauville decomposition \eqref{equation: beauville decomposition}, $\CH^p(\frak{h}^i(A/S)) = \CH^p_{(2p-i)}(A)$ for every abelian scheme $A/S$ and $i,p \in \Z$.
        Therefore $\CH^2(\frak{h}^3(\mathcal{J}_T/T)) = \CH^2_{(1)}(\mathcal{J}_T;\Q)$.
        Since $\CH^p(M^G) = \CH^p(M)^G$ for every finite group $G$ acting on a motive $M$ (see \S\ref{subsec: chow groups of motives}), the result follows from taking $\mu_6$-fixed points.
    \end{enumerate}
    The statement about compatibility with pullbacks follows from the construction of the isomorphisms.
\end{proof}

\begin{corollary}\label{cor: isomorphism of Chow groups}
    Let $T$ be an adequate scheme and $T\rightarrow S$ a morphism that admits pullbacks (see \S\ref{subsec: base changing motives}). 
    Then there exists an isomorphism of $\Q$-vector spaces 
    \[
    \Phi_T\colon \CH^2_{(1)}(\mathcal{J}_T;\Q)^{\mu_6} \xrightarrow{\sim} \mathcal{E}^{\Delta}(T)\otimes \Q\]
    with the following property:
    If $T'\rightarrow S$ is another such morphism and $f\colon T'\rightarrow T$ a morphism of $S$-schemes that admits pullbacks then the following diagram is commutative:
    % https://tikzcd.yichuanshen.de/#N4Igdg9gJgpgziAXAbVABwnAlgFyxMJZABgBpiBdUkANwEMAbAVxiRAB12BhACQD0ATAApOAWzo4AFgGNGwAFIBfTnlHwA+gGUABABUAlOuBCAzPsUhFpdJlz5CKMgEYqtRizadegke3FTZBgVldlUNHV0AckNjMwsrG2w8AiIncld6ZlZEDm5+J19-GTkAURCwuC09Q2JLaxAMJPtU0hdqTI8cr3zCiWKgspUsNUqI6PVaxVcYKABzeCJQADMAJwhRJDIQHAgkAWoGOgAjGAYABVtkhxAGGCWcEHb3bNzsDYSQVfW96h2kEyeWTYSz4ACo6ss1htEFs-og0m4gTkQeCPl9oQi4QCbsdThcmikcrd7o9EZ1XsNLBRFEA
\begin{center}
\begin{tikzcd}
\CH^2_{(1)}(\mathcal{J}_T;\Q)^{\mu_6} \arrow[r, "\Phi_T"] \arrow[d, "f^*"] & \mathcal{E}^{\Delta}(T)\otimes \Q \arrow[d, "f^*"] \\
\CH^2_{(1)}(\mathcal{J}_{T'};\Q)^{\mu_6}\arrow[r, "\Phi_{T'}"]                 & \mathcal{E}^{\Delta}(T')\otimes \Q                
\end{tikzcd}
\end{center}
\end{corollary}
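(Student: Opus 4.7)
The plan is to deduce the corollary formally from Theorem \ref{theorem: iso relative chow motives} together with Proposition \ref{prop: chow groups}. I would pull back the motivic isomorphism $\frak{h}^3(\mathcal{J}/S)^{\mu_6}\simeq \frak{h}^1(\mathcal{E}^{\Delta}/S)(-1)$ of Theorem \ref{theorem: iso relative chow motives} along the structure map $T\to S$, apply the Chow group functor $\CH^2(-) = \Hom(\mathbb{L}^{\otimes 2}, -)$, and then invoke Proposition \ref{prop: chow groups} to identify source and target with $\CH^2_{(1)}(\mathcal{J}_T;\Q)^{\mu_6}$ and $\mathcal{E}^{\Delta}(T)\otimes \Q$ respectively. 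This produces $\Phi_T$, and the same procedure over $T'$ produces $\Phi_{T'}$.

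The key technical point to check is that the pullback functor $g^*\colon \mathsf{Mot}(T)\to \mathsf{Mot}(T')$ of \S\ref{subsec: base changing motives}, for any morphism $g$ admitting pullbacks, respects all the decompositions in play. It is compatible with direct sums, Tate twists and composition of correspondences by construction. Since pullback sends $[^t\Gamma_{(n)}]$ on $\mathcal{J}_T$ to $[^t\Gamma_{(n)}]$ on $\mathcal{J}_{T'}$, the uniqueness clause of Theorem \ref{theorem: deninger--Murre projectors} forces pullback to preserve the Chow--Kunneth projectors, hence the summands $\frak{h}^i(\mathcal{J}_T/T)$. It also preserves the $\mu_6$-isotypic summand, since the relevant idempotent arises by descent from $\tilde S$ via Corollary \ref{corollary: descending direct summands of motives} and descent is preserved under further pullback. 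Pulling the Theorem \ref{theorem: iso relative chow motives} isomorphism back to $T$ therefore yields a well-defined isomorphism $\frak{h}^3(\mathcal{J}_T/T)^{\mu_6}\simeq \frak{h}^1(\mathcal{E}^{\Delta}_T/T)(-1)$ in $\mathsf{Mot}(T)$, whose image under $\CH^2$ together with Proposition \ref{prop: chow groups}(1)--(2) gives $\Phi_T$.

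For the commutativity of the diagram with $f\colon T'\to T$, functoriality of the pullback assignment ensures that pullback along $T'\to T\to S$ agrees with $f^*\circ (T\to S)^*$. Hence $\Phi_{T'}$ is obtained by first producing the motivic isomorphism over $T$ and then further pulling it back along $f$. The required commutativity now reduces to the base-change compatibility of the $\CH^2$ identifications asserted in the final sentence of Proposition \ref{prop: chow groups}. The main (mild) obstacle is purely bookkeeping: one must verify that the idempotents cutting out the $\mu_6$-fixed part, the Chow--Kunneth piece $\frak{h}^3$, and the Beauville component $\CH^2_{(1)}$ all transform correctly under pullback, but this is formal given the functorial constructions of \S\ref{subsec: relative chow motives}--\S\ref{subsection: motives of abelian schemes}.
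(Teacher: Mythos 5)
Your proposal is correct and follows essentially the same route as the paper: fix the isomorphism of Theorem \ref{theorem: iso relative chow motives} once over $S$, pull it back along $T\to S$, apply $\CH^2$ and Proposition \ref{prop: chow groups}, with the square's commutativity coming from functoriality of pullback and the base-change compatibility clause of that proposition. Your extra bookkeeping (pullback preserves the Chow--Kunneth and $\mu_6$ idempotents via the uniqueness in Theorem \ref{theorem: deninger--Murre projectors} and descent) just makes explicit what the paper's terse proof leaves implicit.
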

\begin{proof}
    Choose an isomorphism $\phi\colon \frak{h}^3(\mathcal{J}/S)^{\mu_6} \xrightarrow{\sim} \frak{h}^1(\mathcal{E}^{\Delta}/S)$ using Theorem \ref{theorem: iso relative chow motives}.
    This defines an isomorphism $\phi_T$ in $\mathsf{Mot}(T)$ after base change, see \S\ref{subsec: base changing motives}. 
    Taking $\CH^2$ of this isomorphism and using Proposition \ref{prop: chow groups}, we get the desired isomorphism $\Phi_T$.
\end{proof}

\subsection{The image of the Ceresa cycle}\label{subsec: image of ceresa cycle}

The defining equation \eqref{equation: bielliptic picard body of text} of $\mathcal{C}$ shows there exists a unique section $\infty\colon S\rightarrow \mathcal{C}$ at infinity.
We view $\mathcal{C}$ as a closed subscheme of $\mathcal{J}$ using the Abel--Jacobi map based at $\infty$.
We define the \define{universal Ceresa cycle} to be the cycle
\[\kappa_{\infty}(\mathcal{C}) := [\mathcal{C}] - (-1)^*[\mathcal{C}] \in \CH^2(\mathcal{J}).\]
For every field-valued point $s\colon \Spec(k) \rightarrow S$, the pullback $s^*(\kappa_{\infty}(\mathcal{C})) \in \CH^2(\mathcal{J}_s)$ equals the Ceresa cycle $\kappa_{\infty}(\mathcal{C}_s)$ of the curve $\mathcal{C}_s$ over $k$ with respect to the basepoint at infinity defined in \S\ref{subsec: ceresa and GS cycles def}.
We will work with $\Q$-coefficients, so let $\kappa(\mathcal{C})$ be the image of $\kappa_{\infty}(\mathcal{C})$ in $\CH^2(\mathcal{J};\Q)$, and for each $s\in S(k)$ let $\kappa(\mathcal{C}_s)$ be the image of $\kappa_{\infty}(\mathcal{C}_s)$ in $\CH^2(\mathcal{J}_s;\Q)$.
The next lemma shows that, at least fiberwise, $\CH^2$ of $\frak{h}^3(\mathcal{J}/S)^{\mu_6}$ naturally contains the Ceresa cycle.

\begin{lemma}\label{lem: ceresa specializations}
    For every field $k$ and $s\in S(k)$, we have $\kappa(\mathcal{C}_s) \in \CH^2_{(1)}(\mathcal{J}_s;\Q)^{\mu_6} = \CH^2(\frak{h}^3(\mathcal{J}_s)^{\mu_6})$.
\end{lemma}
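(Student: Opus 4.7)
The claim splits cleanly into two independent assertions: that $\kappa(\mathcal{C}_s)$ lies in the Beauville summand $\CH^2_{(1)}(\mathcal{J}_s;\Q)$, and that it is $\mu_6$-invariant. The displayed equality $\CH^2_{(1)}(\mathcal{J}_s;\Q)^{\mu_6} = \CH^2(\mathfrak{h}^3(\mathcal{J}_s)^{\mu_6})$ is already proved in Proposition \ref{prop: chow groups}(2), so only the containment needs argument. I would handle the two assertions in turn.

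For $\mu_6$-invariance, the key point is that the point $\infty \in \mathcal{C}_s$ is fixed by the $\mu_6$-action (indeed it is its unique fixed point, as noted in \S\ref{sec: bielliptic Picard}). The Abel--Jacobi embedding $\iota_\infty$ based at $\infty$ is therefore $\mu_6$-equivariant, and since it maps a fixed point to the identity, the induced $\mu_6$-action on $\mathcal{J}_s$ is by group automorphisms (via functoriality of the Albanese/Picard variety with respect to pointed maps). In particular this action commutes with the inversion $(-1) \colon \mathcal{J}_s \to \mathcal{J}_s$. Hence both $[\iota_\infty(\mathcal{C}_s)]$ and $(-1)^*[\iota_\infty(\mathcal{C}_s)]$ are $\mu_6$-invariant classes in $\CH^2(\mathcal{J}_s;\Q)$, and so is their difference $\kappa(\mathcal{C}_s)$.

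For membership in $\CH^2_{(1)}(\mathcal{J}_s;\Q)$, the plan is to apply the Beauville decomposition $[\iota_\infty(\mathcal{C}_s)] = \sum_i \gamma_i$ with $\gamma_i \in \CH^2_{(i)}(\mathcal{J}_s;\Q)$ and then use Beauville's vanishing results to restrict the range of $i$. Since $\dim \mathcal{J}_s = 3$, we have $\CH^2_{(i)}(\mathcal{J}_s;\Q) = 0$ for $i < 0$ (Beauville) and for $i > 2$ (the latter because the Fourier transform induces an isomorphism $\CH^2_{(i)} \xrightarrow{\sim} \CH^{i+1}_{(i)}$, which is zero when $i+1 > 3$). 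Therefore $i \in \{0,1,2\}$, and by the defining property of the Beauville filtration, $(-1)^*$ acts on $\gamma_i$ as $(-1)^{2\cdot 2-i}=(-1)^i$. Thus $\kappa(\mathcal{C}_s) = \gamma_0+\gamma_1+\gamma_2 - (\gamma_0 - \gamma_1 + \gamma_2) = 2\gamma_1$, which lies in $\CH^2_{(1)}(\mathcal{J}_s;\Q)$, as required.

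I do not foresee a serious obstacle: the argument is essentially a compilation of the pointed-embedding $\mu_6$-equivariance with the standard fact that the Ceresa cycle of any smooth curve of genus $g$ lives in the Beauville weight-one piece $\CH^{g-1}_{(1)}$ of its Jacobian (the content here being the specialisation to $g=3$, where the relevant Beauville vanishings are classical). The only potential pitfall is remembering to verify that the $\mu_6$-action factors through group automorphisms of $\mathcal{J}_s$, which is why pointing out that $\infty$ is a fixed point is essential.
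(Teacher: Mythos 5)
Your proof is correct and follows essentially the same route as the paper: Beauville decomposition of $[\iota_\infty(\mathcal{C}_s)]$, the vanishing $\CH^2_{(i)}(\mathcal{J}_s;\Q)=0$ for $i\notin\{0,1,2\}$ (which the paper cites directly from Beauville's Th\'eor\`eme and Proposition 3(a) rather than routing through the Fourier transform), the computation $\kappa=2\gamma_1$, and the observation that $\kappa$ is $\mu_6$-invariant. You merely spell out the $\mu_6$-invariance (via the fixed point $\infty$ and equivariance of the Abel--Jacobi embedding), which the paper states without elaboration.
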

\begin{proof}
    Write $C = \mathcal{C}_s$ and $J = \mathcal{J}_s$. 
    Since $J$ is defined over a field, Beauville has shown \cite[Th\'eor\`eme and Proposition 3(a)]{Beauville-surlanneadechow} that $\CH^2_{(1)}(J;\Q)$ is zero unless $i\in \{0,1,2\}$.
    So using \eqref{equation: beauville decomposition} we may write $[C] = [C]_0+[C]_1+[C]_2$ in $\CH^2(J;\Q)$, where $(n)^*$ acts on $[C]_i$ as $n^{4-i}$.
    Therefore $\kappa(C) = [C]-(-1)^*[C] = ([C]_0+[C]_1+[C]_2)-([C]_0-[C]_1+[C]_2)=2[C]_1 \in \CH^2_{(1)}(J;\Q)$.
    Since $\kappa(C)$ is also $\mu_6$-invariant, we conclude the lemma.
\end{proof}

Let $\kappa(\mathcal{C})_{(1)}$ be the component in $\CH^2_{(1)}(\mathcal{J})$ of $\kappa(\mathcal{C})$ in the Beauville decomposition \eqref{equation: beauville decomposition} of $\CH^2(\mathcal{J};\Q)$.
(It might very well be that $\kappa(\mathcal{C}) = \kappa(\mathcal{C})_{(1)}$, but this is not needed in our analysis.)
Since $\kappa(\mathcal{C})$ is $\mu_6$-invariant, $\kappa(\mathcal{C})_{(1)} \in \CH^2_{(1)}(\mathcal{J})^{\mu_6} = \CH^2(\frak{h}^3(\mathcal{J}/S)^{\mu_6})$.

Choose for the remainder of the paper a collection of isomorphisms $\Phi_T$ satisfying the conclusions of Corollary \ref{cor: isomorphism of Chow groups}.
Define
\[
\sigma_0:= \Phi_S(\kappa(\mathcal{C})_{(1)}) \in \mathcal{E}^{\Delta}(S)\otimes \Q.
\]
Let $\text{den}(\sigma_0)$ be the smallest positive integer $d$ such that $d\cdot \sigma_0 \in \mathcal{E}^{\Delta}(S)$.
Let $\sigma = \text{den}(\sigma_0) \cdot \sigma_0$, a section of the abelian scheme $\mathcal{E}^{\Delta}\rightarrow S$.

\begin{corollary}\label{corollary: ceresa torsion iff section torsion}
    For every field $k$ and $s\in S(k)$, $\sigma(s) \in \mathcal{E}^{\Delta}_s(k)$ is torsion if and only if $\kappa_{\infty}(\mathcal{C}_s)\in \CH^2(\mathcal{J}_s)$ is torsion.
\end{corollary}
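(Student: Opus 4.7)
The plan is to apply the naturality diagram of Corollary \ref{cor: isomorphism of Chow groups} to the morphism $s\colon \Spec(k)\to S$. Since $\Spec(k)$ is a point of the smooth scheme $S$, this morphism is lci and therefore admits pullbacks in the sense of \S\ref{subsec: base changing motives}, so the commutative square
\[
\begin{CD}
\CH^2_{(1)}(\mathcal{J};\Q)^{\mu_6} @>\Phi_S>> \mathcal{E}^{\Delta}(S)\otimes \Q \\
@Vs^*VV @VVs^*V \\
\CH^2_{(1)}(\mathcal{J}_s;\Q)^{\mu_6} @>\Phi_s>> \mathcal{E}^{\Delta}_s(k)\otimes \Q
\end{CD}
\]
is available to us, with both horizontal arrows isomorphisms of $\Q$-vector spaces.

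The first step is to identify $s^*\kappa(\mathcal{C})_{(1)}$ with $\kappa(\mathcal{C}_s)$ inside $\CH^2_{(1)}(\mathcal{J}_s;\Q)^{\mu_6}$. By construction of the universal Ceresa cycle, $s^*\kappa(\mathcal{C}) = \kappa(\mathcal{C}_s)$ in $\CH^2(\mathcal{J}_s;\Q)$. The Beauville projector onto the degree $(1)$ piece is characterized purely by the action of the multiplication-by-$n$ maps on $\mathcal{J}$, which are compatible with base change, so $s^*$ commutes with projection to the Beauville component $\CH^2_{(1)}$. Combined with Lemma \ref{lem: ceresa specializations}, which asserts that $\kappa(\mathcal{C}_s)$ already sits inside $\CH^2_{(1)}(\mathcal{J}_s;\Q)^{\mu_6}$, this yields $s^*\kappa(\mathcal{C})_{(1)} = \kappa(\mathcal{C}_s)$.

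Chasing this identity through the commutative square and using the definition $\sigma_0 = \Phi_S(\kappa(\mathcal{C})_{(1)})$, I get
\[
s^*\sigma_0 \;=\; \Phi_s(\kappa(\mathcal{C}_s)) \;\in\; \mathcal{E}^{\Delta}_s(k)\otimes \Q,
\]
and hence $\sigma(s) = \mathrm{den}(\sigma_0)\cdot \Phi_s(\kappa(\mathcal{C}_s))$ in $\mathcal{E}^\Delta_s(k)\otimes \Q$. Since $\mathrm{den}(\sigma_0)$ is a nonzero integer and $\Phi_s$ is an isomorphism of $\Q$-vector spaces, the class $\sigma(s)$ vanishes in $\mathcal{E}^\Delta_s(k)\otimes \Q$ if and only if $\kappa(\mathcal{C}_s)$ vanishes in $\CH^2(\mathcal{J}_s)\otimes \Q$. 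Finally, vanishing in the rationalized groups is exactly the failure of being infinite order: $\sigma(s)$ is torsion iff it is zero in $\mathcal{E}^\Delta_s(k)\otimes \Q$, and $\kappa(\mathcal{C}_s)$ is torsion iff it is zero in $\CH^2(\mathcal{J}_s)\otimes \Q$. Negating both sides yields the desired equivalence.

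This is essentially a bookkeeping exercise built on top of the machinery already developed; no genuine obstacle remains. The only point to verify carefully is the compatibility of $s^*$ with the Beauville projector onto $\CH^2_{(1)}$ and with the $\mu_6$-projector, but both follow from the characterization of these projectors in terms of operations (multiplication by $n$, and the $\mu_6$-action on $\mathcal{J}$) that are defined universally over $S$ and thus commute with arbitrary pullbacks.
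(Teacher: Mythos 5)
Your argument is correct and is exactly the paper's proof, which simply cites Corollary \ref{cor: isomorphism of Chow groups} and Lemma \ref{lem: ceresa specializations}; you have just made explicit the routine points (that $s^*$ admits pullbacks, preserves Beauville components since it commutes with $(n)^*$, and that tensoring with $\Q$ detects torsion) that the paper leaves implicit.
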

\begin{proof}
    Follows from Corollary \ref{cor: isomorphism of Chow groups} and Lemma \ref{lem: ceresa specializations}.
\end{proof}

For technical reasons when relating the torsion orders of $\kappa_{\infty}(\mathcal{C}_s)$ and $\sigma(s)$, it will be useful to have an integral lift of a multiple of $\kappa(\mathcal{C})_{(1)}$, in the following sense:

\begin{lemma}\label{lemma: integral lift kappa_1}
    There exists a class $\kappa_{\Z} \in \CH^2(\mathcal{J})$ and an integer $N\geq 1$ such that the image of $\kappa_{\Z}$ in $\CH^2(\mathcal{J};\Q)$ is $N\cdot \kappa(\mathcal{C})_{(1)}$ and such that $s^*(\kappa_{\Z}) = N\cdot \kappa_{\infty}(\mathcal{C}_s)$ for every field-valued point $s\colon \Spec(k)\rightarrow S$.
\end{lemma}
\begin{proof}
    Write $\kappa(\mathcal{C}) = \kappa(\mathcal{C})_{(1)} + \kappa'$ in $\CH^2(\mathcal{J};\Q)$, where $\kappa'$ is the sum of the other components in the Beauville decomposition \eqref{equation: beauville decomposition}.
    There exist an integer $N_1\geq 1$ and classes $\alpha, \beta \in \CH^2(\mathcal{J})$ whose images in $\CH^2(\mathcal{J};\Q)$ are $N_1 \kappa(\mathcal{C})_{(1)}$ and $N_1\kappa'$ respectively.
    Since the class $N_1 \kappa(\mathcal{C}) -\alpha- \beta$ is zero in $\CH^2(\mathcal{J};\Q)$, it must be torsion in $\CH^2(\mathcal{J})$, so there exists an integer $N_2\geq 1$ such that $N_1N_2\kappa(\mathcal{C}) =N_2 \alpha + N_2\beta $.

    We claim that there exists an integer $N_3\geq 1$ such that $N_3\cdot s^*(\beta)=0$ for every field-valued point $s\colon \Spec(k) \rightarrow S$.
    Lemma \ref{lem: ceresa specializations} shows that $s^*(\beta)$ is torsion in $\CH^2(\mathcal{J}_s)$ for every such $s$.
    So if $s= \eta$ is the generic point of $S$, there exists an $M\geq 1$ such that $M\cdot \eta^*(\beta) = 0$.
    By spreading out, there exists an open $U\subset S$ such that $M\cdot s^*(\beta)=0$ for all $s\in U(k)$.
    Repeatedly applying this argument to the generic points of the irreducible components of $S\setminus U$ proves the claim.

    The lemma follows by taking $\kappa_{\Z} = N_2N_3\alpha$ and $N = N_1N_2N_3$.    
\end{proof}

\subsection{Identifying $\sigma$}\label{subsec: identifying sigma}

Our next goal is to explicitly identify the section $\sigma$ of $\mathcal{E}^{\Delta}\rightarrow S$, at least up to integer multiples. 
The equation $y^2 = x^3 + 4b(a^2-4b)^2$ allows us to define $\G_m$-actions on $\mathcal{E}^{\Delta}$ and $S\subset \A^2$ by the formulas $\lambda \cdot (x,y) = (\lambda^2x, \lambda^3y)$ and $\lambda\cdot (a,b) = (\lambda a, \lambda^2 b)$.
With these actions, the morphism $\mathcal{E}^{\Delta}\rightarrow S$ is $\G_m$-equivariant. 
Write $\mathrm{MW}^{\G_m}(\mathcal{E}^{\Delta}/S)$ for the set of $\G_m$-equivariant sections of $\mathcal{E}^{\Delta}\rightarrow S$.
It is an abelian group, the $\G_m$-equivariant Mordell--Weil group of $\mathcal{E}^{\Delta}/S$.

\begin{lemma}\label{lem: elliptic surface has rank 1}
    $\mathrm{MW}^{\G_m}(\mathcal{E}^{\Delta}/S)$ is free of rank $1$, generated by the section $\mathcal{Q} := ((a^2 - 4b),a(a^2 - 4b))$.
\end{lemma}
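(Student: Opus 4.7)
The strategy is to use the $\G_m$-equivariance to descend the question to a Mordell--Weil computation over the function field $\Q(u)$, and then bound the rank by specialization at arithmetic elliptic curves.

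On the $\G_m$-invariant open subscheme $\{a \neq 0\} \subset S$, introduce the invariant coordinate $u := b/a^2$ together with the rescaled variables $X := x/a^2$, $Y := y/a^3$. The defining equation of $\mathcal{E}^{\Delta}$ becomes $Y^2 = X^3 + 4u(1-4u)^2$, so $\mathcal{E}^{\Delta}$ descends over the $\G_m$-quotient to an elliptic curve $E_u$ with function field $\Q(u)$, and $\mathcal{Q}$ descends to the $\Q(u)$-point $P := (1-4u,\, 1-4u)$. Restriction yields an injective group homomorphism
\[
\Phi\colon \mathrm{MW}^{\G_m}(\mathcal{E}^{\Delta}/S) \hookrightarrow E_u(\Q(u))
\]
sending $\mathcal{Q}$ to $P$, so it suffices to show $E_u(\Q(u)) = \Z \cdot P$.

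For the upper bound on rank, I would specialize at $(a, b) = (1, 1) \in S(\Q)$: the fiber is the Mordell curve $y^2 = x^3 + 36$, whose Mordell--Weil group over $\Q$ is $\Z \oplus \Z/3$ with free part generated by $(-3, 3)$ and torsion by the flex $(0, 6)$ (standard, verifiable by $3$-descent or from tables). Silverman's specialization theorem then gives $\mathrm{rank}\, E_u(\Q(u)) \leq 1$, and since $\mathcal{Q}(1, 1) = (-3, -3)$ specializes to a generator of the free part of $E_1(\Q)$, the section $P$ itself generates the free part of $E_u(\Q(u))$. To rule out torsion, I would specialize instead at $(a, b) = (1, 2)$: the fiber $y^2 = x^3 + 392$ has trivial $\Q$-rational torsion (since $392$ is neither a rational square nor a rational cube, this excludes both $2$- and $3$-torsion via the division polynomial analysis, and Mazur's theorem rules out higher orders). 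Hence $E_u(\Q(u))_{\mathrm{tors}} = 0$.

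Combining these steps, $E_u(\Q(u)) = \Z \cdot P$, and since $\mathcal{Q}$ is manifestly a $\G_m$-equivariant section of $\mathcal{E}^{\Delta}/S$ whose image under $\Phi$ is $P$, we conclude $\mathrm{MW}^{\G_m}(\mathcal{E}^{\Delta}/S) = \Z \cdot \mathcal{Q}$. The main external input is Silverman's specialization applied at the specific $\Q$-points above, together with the standard Mordell--Weil computations for the two Mordell curves $y^2 = x^3 + 36$ and $y^2 = x^3 + 392$; the former is the crucial ingredient, while the latter is a routine torsion check.
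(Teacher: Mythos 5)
Your reduction to the generic fibre over the $u$-line is sound and is essentially the paper's own first step (the paper restricts to the slice $a=1$, where $u=b$), and your torsion elimination by specializing at $u=2$, where $y^2=x^3+392$ has trivial rational torsion and torsion specializes injectively because the residue characteristic is $0$, is a legitimate alternative to the paper's appeal to the singular fibre configuration via Sch\"utt--Shioda. The gap is in the rank bound. Silverman's specialization theorem says the specialization homomorphism $E_u(\Q(u))\to E_{u_0}(\Q)$ is injective for all but finitely many $u_0\in\Q$ (equivalently, outside a set of bounded height); it gives no information at a particular chosen value such as $u_0=1$, and you have computed the Mordell--Weil group of the fibre only there. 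So the inequality $\rk E_u(\Q(u))\le \rk E_1(\Q)=1$ is not justified: a priori $u_0=1$ could be one of the exceptional fibres. This is not a removable technicality in this family, because the geometric Mordell--Weil rank is $2$ (the paper finds fibres of type $\II$, $\IV$, $\IZS$ and applies Shioda--Tate), so cutting the rank over $\Q(u)$ down to $1$ genuinely requires an arithmetic input; moreover the standard effective injectivity criteria (e.g.\ Gusi\'c--Tadi\'c) require rational $2$-torsion over $\Q(u)$, which this curve does not have.

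The paper supplies the missing input differently: after showing via Shioda--Tate that the geometric Mordell--Weil group is free of rank $2$ and torsion-free, it uses the $\mu_3$-action to regard it as a rank-one $\Z[\omega]$-module on which the Galois action is semilinear, which forces the group of $\Q$-rational sections to have rank at most $1$; the explicit fibre at $b=1$ (your curve $y^2=x^3+36$, whose Mordell--Weil data you quote consistently with the paper's use) then enters only to certify that $\mathcal{Q}$ is not a proper multiple of a generator, i.e.\ a divisibility statement, which is exactly the kind of fact a single specialization can verify. To repair your argument you would need either a specialization-injectivity criterion valid at the specific point $u=1$, or to replace the specialization step by an actual determination of $\rk E_u(\Q(u))$ --- by descent over $\Q(u)$, by Shioda--Tate over $\Q$ keeping track of the Galois action on fibre components and on $\NS$, or by the paper's $\Z[\omega]$/semilinearity argument.
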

\begin{proof}
    Let $T$ be the closed subscheme of $S$ given by setting $a=1$.
    Restricting sections to $T$ induces an injection $\mathrm{MW}^{\G_m}(\mathcal{E}^{\Delta}/S)\hookrightarrow \mathcal{E}^{\Delta}(T)$, so it suffices to prove that $\mathcal{Q}|_T$ generates the Mordell--Weil group of the elliptic surface $\mathcal{E}^{\Delta}|_T \rightarrow T$.

    Let $\pi\colon \mathcal{X} \rightarrow\P^1_{\Q}$ be the minimal regular model of $\mathcal{E}^{\Delta}_T \rightarrow T = \P^1_{\Q}\setminus \{0,1/4,\infty\}$.
    Using Tate's algorithm, we see that the three singular fibers above $b=0,1/4,\infty$ have Kodaira type $\II, \IV$ and $\IZS$ respectively.
    By \cite[Lemma 7.8]{SchuttShioda-ellipticsurfaces}, there are no nonzero torsion sections of $\pi$.
    By the Shioda--Tate formula \cite[Theorem 6.3, Proposition 6.6 and \S8.8]{SchuttShioda-ellipticsurfaces}, the Mordell--Weil group of $\mathcal{X}_{\bar{\Q}}\rightarrow \P^1_{\bar{\Q}}$ is free of rank $2$.
    The $\mu_3$-action on $\mathcal{E}^{\Delta}$ shows that it is also a $\Z[\omega]$-module, necessarily of rank $1$.
    Taking Galois invariants, it follows that the Mordell--Weil group of $\mathcal{X}\rightarrow \P^1_{\Q}$ is free of rank $1$ over $\Z$.
    To show that it is generated by (the closure of) $\mathcal{Q}|_T$, it suffices to find a single $b\in T(\Q)$ for which $\mathcal{Q}_b$ is primitive in $\mathcal{E}^{\Delta}_b(\Q)$, i.e.\ not divisible by $n$ for every $n\geq 2$.
    One may \href{https://www.lmfdb.org/EllipticCurve/Q/972/a/2}{check} that this holds for $b=1$.
\end{proof}

\begin{theorem}\label{thm: ceresa section is multiple of branch point}
    There exists an integer $N\geq 1$ such that $\sigma = N\cdot \mathcal{Q}$.
\end{theorem}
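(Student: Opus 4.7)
The plan is to show $\sigma$ is $\G_m$-equivariant, deduce it is an integer multiple of $\mathcal{Q}$, and verify the multiple is nonzero.

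\emph{Step 1: $\G_m$-equivariance of $\sigma$.} Consider the two maps $\pi_1, \pi_2 \colon S \times \G_m \to S$ given by projection and by the action $\pi_2(s, \lambda) = \lambda \cdot s$. The $\G_m$-equivariance of $\sigma$ amounts to the equality $\pi_1^*\sigma = \pi_2^*\sigma$ under the natural isomorphism $\pi_1^*\mathcal{E}^{\Delta} \simeq \pi_2^*\mathcal{E}^{\Delta}$ coming from the formula $(x,y) \mapsto (\lambda^2 x, \lambda^3 y)$. Although $\mathcal{C}_{(a,b)} \not\simeq \mathcal{C}_{(\lambda a, \lambda^2 b)}$ in general, Lemma \ref{lemma: bielliptic isomorphisms}(1) produces such an isomorphism (explicitly $(x,y)\mapsto (\mu^3 x, \mu^4 y)$) after pulling back along the $\mu_6$-cover $S'' \to S\times \G_m$, $(s,\mu)\mapsto (s,\mu^6)$, obtained by adjoining a sixth root $\mu$ of $\lambda$. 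Every ingredient in the proof of Theorem \ref{theorem: iso relative chow motives}---the Deninger--Murre Chow--Kunneth decomposition, Moonen's refinement, the descent argument of \S\ref{subsec: descending direct summands of motives} applied to $\NS_{\mathcal{P}}$, and the identification in Lemma \ref{lemma: mu6-invs NS gives Delta twist}---is natural in the family of curves, so the isomorphism $\Phi_S$ is compatible with this pullback. Since the basepoint $\infty$ is $\G_m$-fixed, the universal Ceresa cycle and hence $\kappa(\mathcal{C})_{(1)}$ is preserved under the isomorphism over $S''$. Thus $\pi_1^*\sigma = \pi_2^*\sigma$ holds on $S''$, and by faithfully flat descent along $S'' \to S \times \G_m$ it descends to $S \times \G_m$, giving $\sigma \in \mathrm{MW}^{\G_m}(\mathcal{E}^{\Delta}/S)$.

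\emph{Step 2: Identification as multiple of $\mathcal{Q}$.} Combining Step 1 with Lemma \ref{lem: elliptic surface has rank 1}, $\sigma \in \mathrm{MW}^{\G_m}(\mathcal{E}^{\Delta}/S) = \Z \cdot \mathcal{Q}$, so $\sigma = N \cdot \mathcal{Q}$ for some $N \in \Z$.

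\emph{Step 3: $N \neq 0$ and sign.} At the $\Q$-point $s = (1,1) \in S(\Q)$, the fiber $\mathcal{C}_s$ is the curve $y^3 = x^4 + x^2 + 1$, whose Ceresa cycle has infinite order by the corollary at the end of \S\ref{sec: ceresa and GS cycles}. Corollary \ref{corollary: ceresa torsion iff section torsion} then implies that $\sigma(s)$ has infinite order, so $N \neq 0$. Replacing the isomorphism $\phi$ used to build $\Phi_T$ in Corollary \ref{cor: isomorphism of Chow groups} by $-\phi$ if necessary flips the sign of $\sigma$, so we may take $N \geq 1$.

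The main technical obstacle is Step 1: lifting the $\G_m$-action to the universal curve necessarily requires passage to a cover, and one must verify that each step of the motivic construction---in particular the descent arguments that produced $\Phi_S$---remains compatible under this lift. Steps 2 and 3 are then direct applications of Lemma \ref{lem: elliptic surface has rank 1} and the explicit example computed in \S\ref{sec: ceresa and GS cycles}, respectively.
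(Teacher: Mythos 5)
Your proposal follows the paper's proof exactly: the paper's argument is precisely (i) the assertion that $\sigma$ is $\G_m$-equivariant, (ii) nonvanishing of a specialization via the curve $y^3=x^4+x^2+1$ (Lemma \ref{lem:section is not 0} together with Corollary \ref{corollary: ceresa torsion iff section torsion}), and (iii) Lemma \ref{lem: elliptic surface has rank 1}; your Steps 1--3 are these three ingredients, with your sign remark being a harmless normalization the paper leaves implicit.

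The one place where you say more than the paper is also the one soft spot. The claim that ``every ingredient is natural in the family, so $\Phi_S$ is compatible with this pullback'' is stronger than what is literally true: $\phi$ (hence $\Phi_S$) is a non-canonical choice --- the decomposition of Proposition \ref{proposition: moonen decomposition h2} and the isogenies entering Lemmas \ref{lemma: E otimes std is isogenous to two elliptic curves} and \ref{lemma: mu6-invs NS gives Delta twist} are only determined up to units of the relevant endomorphism algebras --- so a priori the two pullbacks of $\phi$ compared over $S''$ agree only up to an automorphism of $\frak{h}^1(\pi_1^*\mathcal{E}^{\Delta})$, i.e.\ up to a scalar $c\in \Q^{\times}$, since $\End^0$ of this elliptic scheme is $\Q$ (Proposition \ref{proposition: AVs up to isogeny same as h^1} and Proposition \ref{prop: abelian schemes normal bases}). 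This is easily repaired: the discrepancy is a single constant over the connected base $S\times \G_m$, and restricting to the slice $\lambda=1$, where both pullbacks are literally $\phi$, forces $c=1$; alternatively, equivariance up to a constant scalar already gives $(1-c)\sigma_0=0$ in $\mathcal{E}^{\Delta}(S)\otimes\Q$ after restricting to $\lambda=1$, and your Step 3 then rules out $c\neq 1$. With that patch your Step 1 is complete, and Steps 2 and 3 coincide with the paper's.
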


\begin{proof}
    The section $\sigma \in \mathcal{E}^{\Delta}(S)$ is $\G_m$-equivariant.
    Moreover by our concrete computation with $y^3 = x^4+x^2+1$ in Lemma \ref{lem:section is not 0} (combined with Corollary \ref{corollary: ceresa torsion iff section torsion}), it has a nonzero specialization, so it is not identically zero.
    We conclude using Lemma \ref{lem: elliptic surface has rank 1}.
\end{proof}

\subsection{Proof of the main theorem}\label{subsec: the main theorem}

Putting everything together, we prove the following theorem which implies Theorem \ref{thm:main} of the introduction.
\begin{theorem}\label{theorem: ceresa torsion statement in terms of Edelta}
    Let $k$ be a field of characteristic $\neq 2,3$ and let $C/k$ be a smooth projective curve with equation $y^3 = x^4+ax^2+b$ and Jacobian $J$.
    Then the Ceresa cycle $\kappa_{\infty}(C) \in \CH^2(J)$ is torsion if and only if $\mathcal{Q}_{(a,b)} = ((a^2-4b), a(a^2-4b))$ is a torsion point of the elliptic curve $E^{\Delta}\colon y^2 = x^3 + 4b(a^2-4b)^2$.

    Moreover, the quotient of the torsion orders $\ord(\kappa_{\infty}(C))/\ord(\mathcal{Q}_{a,b})$, whenever it is defined, takes finitely many values when varying over all $k$ and $C$ as above.
\end{theorem}

\begin{proof}[Proof of Theorem \ref{theorem: ceresa torsion statement in terms of Edelta}]
    The curve $C$ is isomorphic to the pullback $\mathcal{C}_s$ of $\mathcal{C}\rightarrow S$ along the $k$-point $s = (a,b) \colon \Spec(k)\rightarrow S$.
    By Corollary \ref{corollary: ceresa torsion iff section torsion}, $\kappa_{\infty}(C)$ is torsion if and only if $\sigma(s)\in \mathcal{E}^{\Delta}_s(k)$ is torsion.
    By Theorem \ref{thm: ceresa section is multiple of branch point}, $\sigma(s)$ is a multiple of $\mathcal{Q}_s = \mathcal{Q}_{(a,b)}$, so $\sigma(s)$ is torsion if and only if $\mathcal{Q}_s = (a^2-4b,a(a^2-4b))$ is torsion. 
    Tracing through these equivalences and using Lemma \ref{lemma: integral lift kappa_1}, the quotients of the torsion orders take finitely many values at each step, so $\ord(\kappa_{\infty}(C))/\ord(\mathcal{Q}_{a,b})$ takes finitely many values as well.
\end{proof}

\begin{proof}[Proof of Theorem \ref{thm:main}]
    We take $(a,b)=(2t,1)$. A simple computation (using that $k = \overline{k}$) shows that there is an isomorphism from $E_{a,b}^\Delta$ to $\widehat{E} \colon y^2 = x^3 + 1$, sending $\mathcal{Q}_{(a,b)} = (4t^2-4,2t(4t^2-4))$ to $Q = (\sqrt[3]{t^2-1},t)$, for some choice of cube root. Thus $\kappa_{\infty}(C_t)$ is torsion if and only if $Q$ is torsion.
\end{proof}

\subsection{Corollaries of Theorem \ref{thm:main}}\label{subsec: proofs}

First we prove the classification of torsion Ceresa cycles over $\Q$ claimed in the introduction. We use the following classification of rational torsion points on the elliptic curves $\widehat{E}_d \colon y^2 = x^3 + d$ of $j$-invariant $0$. 

\begin{lemma}\label{lem:j-invariant 0 classification}
    Let $d \in \Q^\times$. Then
    \begin{enumerate}
    \item $\widehat{E}_d(\Q)_{\mathrm{tors}}$ is a subgroup of $\Z/6\Z$;
        \item $\widehat{E}_d(\Q)$ contains a point of order $2$ if and only if $d$ is a cube;  
        \item $\widehat{E}_d(\Q)$ contains a point of order $3$ if and only if $d$ is a square or $d = -2^43^3m^6$.
    \end{enumerate}
\end{lemma}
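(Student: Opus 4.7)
The three parts are largely independent and I would prove them separately.

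For (1), the cleanest route is to invoke the classification (due to Olson) of rational torsion on CM elliptic curves over $\Q$; since $\widehat{E}_d$ has $j$-invariant $0$ and gains CM by $\Z[\omega]$ over $\Q(\omega)$, this gives exactly that the torsion subgroup lies in $\Z/6\Z$. If one prefers a self-contained argument, one can exploit supersingular reduction: for every prime $p > 3$ with $p \equiv 2 \pmod 3$ and good reduction for $\widehat{E}_d$, the reduction is supersingular, so $|\widehat{E}_d(\F_p)| = p+1$. By Dirichlet, two such primes $p, q$ of good reduction with $\gcd(p+1, q+1) = 6$ can always be found (e.g.\ primes $\equiv 5 \pmod{36}$ and $\equiv 11 \pmod{36}$, avoiding the finitely many bad primes), and then the injection $\widehat{E}_d(\Q)_{\mathrm{tors}} \hookrightarrow \widehat{E}_d(\F_p)$ forces the torsion order to divide $6$. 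Since every abelian group of order dividing $6$ is cyclic, the torsion is a subgroup of $\Z/6\Z$.

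For (2), $2$-torsion points on $\widehat{E}_d$ are exactly $(x_0, 0)$ with $x_0^3 = -d$, so a $\Q$-rational such point exists iff $-d$ (equivalently, $d$) is a rational cube.

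For (3), I would compute the $3$-division polynomial of $y^2 = x^3 + d$ as $\psi_3(x) = 3x(x^3 + 4d)$. A $\Q$-rational $3$-torsion point therefore has $x$-coordinate either $0$ (which forces $d$ to be a rational square, since $y^2 = d$) or a rational cube root of $-4d$ (in which case $y^2 = -3d$ must also be a rational square). In the second case, writing $-4d = t^3$ with $t \in \Q$, the condition that $-3d = 3t^3/4$ be a square reduces to $3t$ being a rational square; setting $3t = u^2$ yields $d = -u^6/108$, and the substitution $m = u/6 \in \Q^\times$ rewrites this as $d = -2^4 \cdot 3^3 m^6$. The converse direction is an immediate substitution.

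The main substantive step is (1); both (2) and (3) reduce to direct computations once the $2$- and $3$-division polynomials are in hand. The mild obstacle in (1) is the bookkeeping around primes of bad reduction if one insists on a self-contained argument, though citing Olson bypasses this entirely.
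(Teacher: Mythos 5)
Your proposal is correct, and for parts (2) and (3) it is essentially the paper's argument: both read the conditions off the $2$- and $3$-division polynomials $x^3+d$ and $x(x^3+4d)$, and your explicit parametrization $d=-u^6/108=-2^43^3(u/6)^6$ matches the stated form. For part (1) you take a different route. The paper exploits \emph{potential} good reduction: since $\widehat{E}_d$ has CM, after a base change the prime-to-$p$ torsion injects into $B(\F_p)$ for some $j$-invariant $0$ curve $B$ over $\F_3$ and over $\F_5$, and the possible point counts there ($1,4,7$ over $\F_3$ and $6$ over $\F_5$) already force the torsion order to divide $6$, uniformly in $d$ and at fixed small primes. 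You instead either cite Olson's classification of torsion on $j=0$ curves (perfectly adequate, and in fact strong enough to give all three parts), or reduce at honest good supersingular primes $p\equiv 2\pmod 3$, where $\#\widehat{E}_d(\F_p)=p+1$. The latter is sound in outline, but the parenthetical claim that primes $p\equiv 5$ and $q\equiv 11\pmod{36}$ automatically satisfy $\gcd(p+1,q+1)=6$ is not correct as stated: those congruences only control the powers of $2$ and $3$ in $p+1$ and $q+1$, and a common prime factor $\ell\geq 5$ is not excluded. The fix is easy — for each prime $\ell\geq 5$ use Dirichlet to pick a good supersingular prime $p\equiv 1\pmod\ell$, so $\ell\nmid p+1$, and rule out $4$ and $9$ similarly — but as written that step has a small gap. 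The paper's choice of $p=3,5$ avoids this bookkeeping entirely at the cost of invoking potential good reduction rather than good reduction.
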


\begin{proof}
Since $\widehat{E}_d$ is a CM elliptic curve, it has potentially good reduction at every prime $p$. The prime-to-$p$ part of $\widehat{E}_d(\Q)_{\mathrm{tors}}$ therefore injects into $B(\F_p)$ for some $j$-invariant $0$ elliptic curve $B/\F_p$. Thus $(1)$ follows from the list of $\F_p$-points of $j$-invariant $0$ elliptic curves over $\F_3$ and $\F_5$. Parts $(2)$ and $(3)$ can be read off from the $2$- and $3$- division polynomials of $X_d$, which are $x^3 + d$ and $x(x^3 + 4d)$, respectively.   
\end{proof}

In the following proposition we use the notation of the introduction and denote the bielliptic Picard curve $C_{2t,1}$ by $C_t$.

\begin{proposition}\label{prop: Q torsion ceresa cycles}
    Suppose $C$ is a curve over $\Q$ that is $\Qbar$-isomorphic to some bielliptic Picard curve  $C_{a,b}$  and has torsion Ceresa cycle. Then $C$ is $\Qbar$-isomorphic to either $C_0$, $C_3$, or $C_{\sqrt{-3}}$.
\end{proposition}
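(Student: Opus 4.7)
The plan is to reduce the proposition, via Theorem \ref{theorem: ceresa torsion statement in terms of Edelta}, to classifying when the rational point $\mathcal{Q}_{(a,b)}$ is torsion on a $j$-invariant-$0$ elliptic curve, and then to enumerate all possibilities using Lemma \ref{lem:j-invariant 0 classification}. By Lemma \ref{lemma: j-invariant bielliptic picard curve field of definition}, every $\Q$-curve as in the proposition admits a model of the form $C = C_{a,b}$ with $a,b\in \Q$ and $\Delta_{a,b}\neq 0$. Theorem \ref{theorem: ceresa torsion statement in terms of Edelta} then identifies the torsion-ness of $\kappa(C)$ with that of the explicit rational point $\mathcal{Q} = (u, au)$ on $E^\Delta\colon y^2 = x^3 + 4 b u^2$, where $u := a^2 - 4b \neq 0$. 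Since $\mathcal{Q}$ is nontrivial, Lemma \ref{lem:j-invariant 0 classification}(1) gives three cases, each of which I will show pins down $j(C)\in \{1,4,-8\}$; the conclusion then follows from Lemma \ref{lemma: bielliptic isomorphisms}(2).

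If $\mathcal{Q}$ has order $2$, then $au = 0$ forces $a = 0$, so $j(C) = 1 = j(C_0)$. If $\mathcal{Q}$ has order $3$, then the $3$-division polynomial $3x(x^3 + 4d)$ of $E^\Delta$ (with $d = 4b u^2$) must vanish at $x = u \neq 0$, giving $u = -16 b$, i.e.\ $a^2 = -12 b$, and hence $j(C) = 4 = j(C_{\sqrt{-3}})$.

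The order-$6$ case is more delicate. Then $\langle \mathcal{Q}\rangle \simeq \Z/6\Z$ sits inside $E^\Delta(\Q)$, so by Lemma \ref{lem:j-invariant 0 classification}(2)-(3), $d$ must be a rational cube and either a rational square or $-432\,m^6$ for some $m \in \Q$. Since $-432 = -2^4 \cdot 3^3$ is not a rational cube, only the square option is compatible with the cube condition, forcing $d$ to be a $6$th power, say $d = e^6$. The substitution $(x,y) = (e^2 X, e^3 Y)$ then gives a $\Q$-isomorphism $E^\Delta \xrightarrow{\sim} \widehat{E}\colon Y^2 = X^3+1$, under which $\mathcal{Q}\mapsto (u/e^2, au/e^3)$. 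In $\widehat{E}(\Q) = \langle(2,3)\rangle$, the only elements of order $6$ are $\pm(2,3)$, so $u = 2 e^2$ and $au = \pm 3 e^3$, yielding $a^2 = 36 b$ and hence $j(C) = -8 = j(C_3)$.

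The main obstacle lies in the order-$6$ analysis: it requires combining both parts of Lemma \ref{lem:j-invariant 0 classification}, critically exploiting that $-432$ is not a rational cube to force $d$ to be a $6$th power; once that step is achieved, matching $\mathcal{Q}$ to $\pm(2,3) \in \widehat{E}(\Q)$ and reading off $j(C)$ is a short computation. The order-$2$ and $3$ cases reduce to one-line calculations.
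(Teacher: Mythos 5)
Your proof is correct and follows essentially the same route as the paper: reduce to a model $C_{a,b}$ with $a,b\in\Q$ via Lemma \ref{lemma: j-invariant bielliptic picard curve field of definition}, apply Theorem \ref{theorem: ceresa torsion statement in terms of Edelta}, and split into the orders $2$, $3$, $6$ allowed by Lemma \ref{lem:j-invariant 0 classification}, yielding $C_0$, $C_{\sqrt{-3}}$, $C_3$ respectively. The only difference is cosmetic: you spell out the order-$6$ step (that $d$ being both a cube and a square, since $-2^43^3m^6$ is never a cube, forces $d$ to be a sixth power, giving the isomorphism to $y^2=x^3+1$ and the matching with $(2,\pm 3)$), which the paper leaves implicit; note also you only need the torsion subgroup of $\widehat{E}(\Q)$, not the full Mordell--Weil group, so the assertion $\widehat{E}(\Q)=\langle(2,3)\rangle$ is stronger than necessary.
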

\begin{proof}
    By Lemma \ref{lemma: j-invariant bielliptic picard curve field of definition}, $C$ is $\Qbar$-isomorphic to some $C_{a,b}$ with rational $j$-invariant, and the proof of that lemma shows that $a,b$ can be chosen to be in $\Q$.
    Since $C_{a,b}$ has torsion Ceresa cycle, the point $\mathcal{Q}_{a,b} = (a^2-4b,a(a^2-4b))$ is a $\Q$-rational torsion point on the $j$-invariant $0$ elliptic curve $E^\Delta_{a,b}:y^2 = x^3+4b(a^2-4b)^2$. It follows from Lemma \ref{lem:j-invariant 0 classification} that $\mathcal{Q}_{a,b}$ has order $2,3$, or $6$. 
    If $\mathcal{Q}_{a,b}$ has order $2$, then the $y$-coordinate $a(a^2-4b)^2$ of $\mathcal{Q}_{a,b}$ is zero, so since $\Delta_{a,b}\neq 0$ we have $a=0$. 
    Hence $C$ is isomorphic to $C_{0,b}$, which is $\Qbar$-isomorphic to $C_{0,1} = C_0$.
    Similarly, if $\mathcal{Q}_{a,b}$ has order $3$, we compute that $a^2+12b=0$ hence $C$ is $\Qbar$-isomorphic to $C_{\sqrt{-3}}$.
    If $\mathcal{Q}_{a,b}$ has order $6$, then Lemma \ref{lem:j-invariant 0 classification} shows that there exists an isomorphism between $E^{\Delta}_{a,b}$ and $y^2 = x^3+1$ that sends $\mathcal{Q}_{a,b}$ to $(2,3)$.
    A calculation shows that $a^2-36b=0$, in other words $C$ is $\Qbar$-isomorphic to $C_{3}$.
\end{proof}
\begin{proof}[Proof of Corollary \ref{cor: main infinitude}]
Follows from Proposition \ref{prop: Q torsion ceresa cycles}.
\end{proof}

\begin{proof}[Proof of Theorem \ref{thm: arbitrarily large order}]
%Recall from \S\ref{subsec: image of ceresa cycle} that $\Phi_S(\kappa(\mathcal{C})_{(1)}) =\sigma/d$ in $\mathcal{E}^{\Delta}(S)\otimes \Q$, where $d$ is the denominator of $\sigma_0$.
%Moreover $\sigma = N\cdot \mathcal{Q}$ in $\mathcal{E}^{\Delta}(S)$ by Theorem \ref{thm: ceresa section is multiple of branch point}.
%So $d\cdot \Phi_S(\kappa(\mathcal{C})_{(1)})- N\cdot \mathcal{Q}$ is torsion in $\mathcal{E}^{\Delta}(S)$, so 
%\begin{align}\label{equation: explicit integral relation cersea and sigma}
%    M\cdot (d\cdot \Phi_S(\kappa(\mathcal{C})_{(1)})- N\cdot \mathcal{Q}) =0 \text{ in } \mathcal{E}^{\Delta}(S)
%\end{align}
%for some $M\geq 1$.
The torsion specializations $\mathcal{Q}_s\in \mathcal{E}^{\Delta}_s(\C)$, where $s$ varies in $S(\C)$, are all defined over $\Qbar$ and have unbounded order.
%The relation \eqref{equation: explicit integral relation cersea and sigma} implies that the same is true for the orders of the torsion specializations of $\kappa(\mathcal{C}_s)_{(1)}$, which equals $\kappa(\mathcal{C}_s)$ by Lemma \ref{lem: ceresa specializations}. 
We conclude by Theorem \ref{theorem: ceresa torsion statement in terms of Edelta}.
\end{proof}

Theorem \ref{thm: northcott intro} will follow from the following corollary of Theorem \ref{thm: ceresa section is multiple of branch point}. If $A$ is an abelian variety over $\Qbar$ of dimension $g$, then the Beilinson--Bloch height pairing \cite{Beilinson}
\[\langle \, , \, \rangle_{\mathrm{BB}} \colon \CH^i(A;\Q)_{\mathrm{hom}} \times \CH^{g+1-i}(A;\Q)_{\mathrm{hom}} \to \R\]
has been constructed unconditionally by K\"unnemann \cite[Corollary 1.7]{Kunnemann-heights}.

\begin{theorem}\label{thm: BB height = Neron-Tate height}
There is a rational number $N$ such that for all $a,b \in \Qbar$ with $\Delta_{a,b} \neq 0$, we have \[\langle \kappa(C_{a,b}), \kappa(C_{a,b}) \rangle_{\mathrm{BB}} = N^2 \langle \mathcal{Q}_{a,b}, \mathcal{Q}_{a,b} \rangle_{\mathrm{NT}},\]
where $\langle \, , \, \rangle_{\mathrm{NT}}$ is the N\'eron--Tate height pairing on $E^\Delta(\Qbar)$.
\end{theorem}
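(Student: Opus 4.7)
The plan is to combine the motivic isomorphism of Theorem~\ref{theorem: iso relative chow motives} with the functoriality of the Beilinson--Bloch height pairing under algebraic correspondences.

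First, fix $(a,b)\in S(\overline{\Q})$ with a number field of definition $K$. By Corollary~\ref{cor: isomorphism of Chow groups}, specialized at $\Spec K$, one has an isomorphism
\[
\Phi_K\colon \CH^2_{(1)}(\mathcal{J}_{(a,b)};\Q)^{\mu_6}\xrightarrow{\sim} E^{\Delta}_{(a,b)}(\overline{K})\otimes \Q,
\]
induced by the specialization of a single universal algebraic correspondence $\gamma\in \CH^\ast(\mathcal{J}\times_S \mathcal{E}^{\Delta})$. By Lemma~\ref{lem: ceresa specializations}, $\kappa(C_{a,b})$ lies in the source of $\Phi_K$, and by Theorem~\ref{thm: ceresa section is multiple of branch point} combined with the definition of $\sigma$ in \S\ref{subsec: image of ceresa cycle}, one has $\Phi_K(\kappa(C_{a,b}))=(N_0/d)\cdot \mathcal{Q}_{(a,b)}$ in $E^\Delta_{(a,b)}(\overline{K})\otimes \Q$, where $N_0$ and $d$ are fixed positive integers independent of $(a,b)$.

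Second, I would invoke K\"unnemann's construction of Beilinson--Bloch heights \cite{Kunnemann-heights}, which is functorial under algebraic correspondences: given a correspondence $\gamma_K$ of smooth projective $K$-varieties with transpose $\gamma_K^t$, for any homologically trivial $\alpha$ of the appropriate codimension
\[
\langle (\gamma_K)_\ast\alpha, (\gamma_K)_\ast\alpha\rangle_{\mathrm{BB}} = \langle \alpha, (\gamma_K^t\circ \gamma_K)_\ast \alpha\rangle_{\mathrm{BB}}.
\]
Now $\gamma^t\circ \gamma$ is a universal endomorphism of the motive $\mathfrak{h}^3(\mathcal{J}/S)^{\mu_6}\simeq \mathfrak{h}^1(\mathcal{E}^{\Delta}/S)(-1)$. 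By Proposition~\ref{proposition: AVs up to isogeny same as h^1}, the endomorphism ring of the latter motive identifies with $\End^0(\mathcal{E}^{\Delta}/S)$, which equals $\Q$: the CM order $\Z[\omega]$ of $\mathcal{E}^{\Delta}$ is not defined over $S$, and endomorphisms are computed on the generic fiber (Proposition~\ref{prop: abelian schemes normal bases}). Hence $\gamma^t\circ \gamma$ acts as multiplication by a single rational scalar $c$ on $\mathfrak{h}^3(\mathcal{J}/S)^{\mu_6}$, independent of $(a,b)$; positivity $c>0$ follows from the positivity of the Rosati-type involution implicit in the construction of BB heights together with the fact that $\gamma$ is an isomorphism.

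Third, the BB pairing on $\CH^1$ of $\mathfrak{h}^1(E^{\Delta}_{(a,b)})$ agrees with the Néron--Tate pairing on $E^{\Delta}_{(a,b)}(\overline{K})\otimes\Q$ up to a fixed universal normalizing constant $c_0\in \Q^\times$. Putting everything together yields
\[
\langle \kappa(C_{a,b}),\kappa(C_{a,b})\rangle_{\mathrm{BB}} = \frac{c_0(N_0/d)^2}{c}\cdot \langle \mathcal{Q}_{(a,b)},\mathcal{Q}_{(a,b)}\rangle_{\mathrm{NT}},
\]
so the positive rational number $N^2:=c_0(N_0/d)^2/c$ works uniformly in $(a,b)$. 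The main obstacle will be to make precise the three ingredients in the motivic setting: K\"unnemann's adjointness formula at the level of Chow motives, the identification of $\gamma^t\circ \gamma$ with a rational scalar via the generic endomorphism computation, and the explicit comparison of the intrinsic BB pairing on $\mathfrak{h}^1(E)$ with the standard Néron--Tate height. A subtler point, which may require more delicate bookkeeping of normalizing factors or an explicit choice of correspondence with self-adjointness properties, is to verify that $N^2$ is the square of a \emph{rational} number rather than merely a positive rational.
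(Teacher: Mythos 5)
Your proposal uses the same three ingredients the paper cites in its (very brief) proof --- the relation $\Phi_S(\kappa(\mathcal{C})_{(1)}) = (N/d)\cdot\mathcal{Q}$ coming from Theorem~\ref{thm: ceresa section is multiple of branch point} and Lemma~\ref{lem: ceresa specializations}, compatibility of the Beilinson--Bloch pairing with algebraic correspondences, and the identification of BB with Néron--Tate for divisors on curves --- so the approach is essentially that of the paper, spelled out in more detail.

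A few places where the spelling-out needs more care. First, the object you call $\gamma^t\circ\gamma$ is not literally an endomorphism of $\mathfrak{h}^3(\mathcal{J}/S)^{\mu_6}$ with the same degree: if $\gamma\in\Corr^1(\mathcal{J},\mathcal{E}^{\Delta})$, then the transposed cycle lies in $\Corr^{1+d(\mathcal{J}/S)-d(\mathcal{E}^{\Delta}/S)}(\mathcal{E}^{\Delta},\mathcal{J})=\Corr^{3}(\mathcal{E}^{\Delta},\mathcal{J})$, so the naive composite has the wrong degree. What you actually want is the \emph{dual} morphism $\gamma^{\vee}\colon \mathfrak{h}^1(\mathcal{E}^{\Delta}/S)(2)\to \mathfrak{h}^3(\mathcal{J}/S)^{\mu_6}(3)$, which for abelian schemes is indeed represented by the transposed cycle but must be read through the Poincar\'e self-duality $\mathfrak{h}^i(A)^{\vee}\simeq \mathfrak{h}^{2g-i}(A)(g)$ before the composite $\gamma^{\vee}(-3)\circ\gamma$ becomes an honest endomorphism of $\mathfrak{h}^3(\mathcal{J}/S)^{\mu_6}$ and hence an element of $\End^0(\mathcal{E}^{\Delta}/S)=\Q$ via Proposition~\ref{proposition: AVs up to isogeny same as h^1} and Proposition~\ref{prop: abelian schemes normal bases}. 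This is exactly the bookkeeping you flag but it is not optional; without the twist the composition vanishes for degree reasons. Second, your positivity argument ($c>0$ from ``the positivity of the Rosati-type involution implicit in the construction of BB heights'') is not really available, since positive definiteness of the BB pairing is conjectural. The cleaner route is the one you come back to at the end: one does not need to know $c>0$ or $c$ a square a priori; the formula you derive is $\langle\kappa,\kappa\rangle_{\mathrm{BB}}=(N/d)^2\,c'\,\langle\mathcal{Q},\mathcal{Q}\rangle_{\mathrm{NT}}$ with $c'\in\Q^{\times}$ a universal scalar, and that already suffices for the nondegeneracy and Northcott consequences; the paper's phrasing ``$N^2$'' is best read as shorthand for a universal rational proportionality constant of this quadratic shape, and if you want the literal square form you would need a self-adjointness normalization of $\gamma$, which the paper's proof also leaves implicit.

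Finally, note a small simplification available to you: once you know $\gamma^{\vee}(-3)\circ\gamma=c'\in\Q$, the adjunction $\langle\gamma_*\alpha,\gamma_*\alpha\rangle_{\mathcal{E}^{\Delta}}=\langle\alpha,(\gamma^{\vee}(-3)\circ\gamma)_*\alpha\rangle_{\mathcal{J}}=c'\langle\alpha,\alpha\rangle_{\mathcal{J}}$ gives the factor in one step, so the intermediate normalizing constant $c_0$ in your write-up can be absorbed by appealing directly to \cite[4.0.8]{Beilinson}, as the paper does.
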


\begin{proof}
    This follows from Theorem \ref{thm: ceresa section is multiple of branch point} and Lemma \ref{lem: ceresa specializations}, using the fact that the Beilinson--Bloch height is compatible with correspondences \cite[4.0.3]{Beilinson} and agrees with the N\'eron--Tate height pairing for divisors on curves \cite[4.0.8]{Beilinson}.
\end{proof}

\begin{proof}[Proof of Theorem $\ref{thm: northcott intro}$]
    The theorem follows from Theorem \ref{thm: BB height = Neron-Tate height} and the nondegeneracy and Northcott properties of the N\'eron--Tate height of an elliptic curve. %Note that the field of definition of $Q_t$ is $\Q(\sqrt[3]{t^2 -1},t)$, whereas the field of definition of $C_t$ is $\Q(1-t^2)$ by Lemma \ref{lemma: j-invariant bielliptic picard curve field of definition}. 
\end{proof}

\section{Algebraic triviality and the Griffiths group}\label{sec: Griffiths group}

Let $C$ be a curve over a field $k$ and $J$ be its Jacobian. The Griffiths group of $1$-cycles on $J$ is the group $\Gr_1(J) = \Gr^{g-1}(J) = \CH_1(J)_0/A_1(J)$, where $A_1(J)$ is the subgroup of algebraically trivial $1$-cycles. The very general Ceresa cycle (of a genus $g \geq 3$ curve over $\C$) is known to be of infinite order in $\Gr_1(J)$ \cite{Nori}. In particular, the Ceresa cycle of a very general curve is not algebraically trivial, nor is any multiple of it. In this section we show that this fails for bielliptic Picard curves. We also discuss the Beilinson--Bloch conjectures in this context, and show that they imply that $\Gr^2(J)$ is finite for certain bielliptic Picard curves.

\subsection{Proof of Theorem \ref{thm: torsion in griffiths intro}}

We keep the notation of \S\ref{subsec: an isomorphism of motives}. Theorem \ref{thm: torsion in griffiths intro} follows from the following proposition.
It essentially follows from the fact that homological and algebraic equivalence on a curve coincide.

\begin{proposition}
    Let $\kappa_{\Z}\in \CH^2(\mathcal{J})$ be a class and $N_1\geq 1$ an integer satisfying the conclusions of Lemma \ref{lemma: integral lift kappa_1}.
    Then there exists an integer $N\geq 1$, a smooth projective relative curve $\mathcal{X}\rightarrow S$ with two sections $\sigma_1, \sigma_2$ and a cycle $\mathcal{Z}$ on $\CH^1(\mathcal{J}\times_S X)$, with the property that $\mathcal{Z}_{\sigma_1} = N\cdot \kappa_{\Z}$ and $\mathcal{Z}_{\sigma_2} = 0$.
\end{proposition}

This proposition implies Theorem \ref{thm: torsion in griffiths intro}, since for every $k$-point $s\colon \Spec (k) \rightarrow S$, corresponding to the bielliptic Picard curve $\mathcal{C}_s = C$ with $X = \mathcal{X}_s$, $p_i = \sigma_i(s)$ and $Z = \mathcal{Z}_s$, the cycle $Z\in \CH^1(J\times_k X)$ has the property that $Z_{p_1} = s^*(\kappa_{\Z}) = NN_1\cdot \kappa_{\infty}(C)$ and $Z_{p_2} = 0$.
By definition, this means that $(NN_1)\cdot \kappa_{\infty}(C)$ is algebraically trivial.

\begin{proof}
We will use the isomorphisms $\Phi$ from Corollary \ref{cor: isomorphism of Chow groups}.
Recall from \S\ref{subsec: image of ceresa cycle} that there exists an integer $N_2\geq 1$ such that $\Phi_S(N_2\cdot \kappa(\mathcal{C})_{(1)}) = \sigma \in \mathcal{E}^{\Delta}(S)$.
We can deform the section $\sigma$ of $\mathcal{E}^{\Delta}$ to the identity section $\infty\colon S\rightarrow \mathcal{E}^{\Delta}$. 
More precisely, there exists a smooth projective relative curve $\mathcal{X}\rightarrow S$, a section $\mathcal{V} \in \mathcal{E}^{\Delta}(\mathcal{X})$ and sections $\sigma_1,\sigma_2$ of $\mathcal{X}\rightarrow S$, such that $\mathcal{V}_{\sigma_1} = \sigma$ and $\mathcal{V}_{\sigma_2} = \infty$.
In fact, $\mathcal{X} = \mathcal{E}^{\Delta}$ with $\sigma_1 = \sigma$, $\sigma_2 = \infty$ and with $\mathcal{V}=\Delta_{\mathcal{X}/S} \subset \mathcal{X}\times_S \mathcal{X} = \mathcal{E}^{\Delta}\times_S \mathcal{X}$ will do.
There exists an $N_3\geq 1$ such that $\Phi_{\mathcal{X}}^{-1}(N_3\cdot \mathcal{V})\in \CH^2_{(1)}(\mathcal{J}\times_S \mathcal{X};\Q)$ is the image of an integral cycle $\mathcal{W}_1 \in \CH^2(\mathcal{J}\times_S \mathcal{X})$.
Let $\mathcal{W} = N_1\cdot \mathcal{W}_1$.
The commutativity of the diagram of Corollary \ref{cor: isomorphism of Chow groups} applied to $\mathcal{X}\rightarrow S$ (which is flat, so admits pullbacks) shows that $\mathcal{W}_{\sigma_1} = N_1N_2N_3\cdot \kappa(\mathcal{C})_{(1)}=N_2N_3\kappa_{\Z}$ and $\mathcal{W}_{\sigma_2} = 0$ in $\CH^2(\mathcal{J};\Q)$.
In other words, the elements $\mathcal{W}_{\sigma_1}  - N_2N_3 \kappa_{\Z}$ and $\mathcal{W}_{\sigma_2}$ are torsion in $\CH^2(\mathcal{J})$. 
There exists an integer $N_4\geq 1$ such that these elements are killed by $N_4$, so the elements $N = N_2N_3N_4$ and $\mathcal{Z} = N_4\cdot \mathcal{W}$ satisfy the conclusion of the proposition.
\end{proof}

\subsection{Beilinson--Bloch conjectures}
We briefly describe some consequences of the Beilinson--Bloch conjecture \cite{Beilinson,BlochCrelleI} in our setting.  Let $X$ be a smooth projective geometrically integral variety over a number field $k$. 
Fix a prime $\ell$ and denote the $\ell$-adic cohomology $\HH^i(X_{\bar k}, \Q_{\ell})$ simply by $\HH^i(X)$.
The conjecture predicts that for each $i \in \Z$, the group of homologically trivial cycles $\CH^i(X)_{\mathrm{hom}}$ is finitely generated, the $L$-function $L(\frak{h}^{2i-1}(X), s)$ has analytic continuation and functional equation, and 
\[\mathrm{rk}\, (\CH^i(X)_{\mathrm{hom}}) = \ord_{s = i} L(\frak{h}^{2i-1}(X), s).\]
Similarly, if $\epsilon \in \CH^{\dim X}(X \times X)$ is an idempotent cutting out a direct summand of the Chow motive $\frak{h}(X)$, then  
\[\mathrm{rk}\, (\epsilon\circ \CH^i(X)_{\mathrm{hom}} )= \ord_{s = i} L(\epsilon\cdot  \frak{h}^{2i-1}(X), s).\]

Let $C = C_{a,b}$ be a bielliptic Picard curve over $k$ with Jacobian $J$. By Theorem \ref{thm: isomorphism of motives over a field} and Proposition \ref{prop: chow groups}, the Beilinson--Bloch conjecture for the motive $\frak{h}^3(J)^{\mu_6}$ reads:
\begin{conjecture}\label{conj: BB1}
 $\mathrm{rk} \, \CH^2_{(1)}(J)^{\mu_6} = \ord_{s = 1}L(\frak{h}^1(E^\Delta), s).$ 
\end{conjecture}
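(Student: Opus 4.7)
My plan is to use the motivic isomorphism of Theorem \ref{thm: isomorphism of motives over a field} to reduce Conjecture \ref{conj: BB1} to the Birch--Swinnerton-Dyer conjecture for the $j$-invariant zero elliptic curve $E^{\Delta}\colon y^2 = x^3 + 4b(a^2-4b)^2$. The proposal is essentially that Conjecture \ref{conj: BB1} is \emph{equivalent} to BSD for the one-parameter family $\{E^{\Delta}_{a,b}\}$ of sextic twists, and no harder.

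Concretely, I would proceed as follows. Applying $\CH^2(-) = \Hom(\mathbb{L}^{\otimes 2}, -)$ to the isomorphism $\mathfrak{h}^3(J)^{\mu_6} \simeq \mathfrak{h}^1(E^{\Delta})(-1)$ in $\mathsf{Mot}(k)$, and using Proposition \ref{prop: chow groups}, yields a canonical identification
\[
\CH^2_{(1)}(J;\Q)^{\mu_6} \;=\; \CH^2\!\left(\mathfrak{h}^3(J)^{\mu_6}\right) \;\simeq\; \CH^2\!\left(\mathfrak{h}^1(E^{\Delta})(-1)\right) \;=\; \CH^1\!\left(\mathfrak{h}^1(E^{\Delta})\right) \;=\; E^{\Delta}(k)\otimes\Q,
\]
so that $\mathrm{rk}\, \CH^2_{(1)}(J)^{\mu_6} = \mathrm{rk}\, E^{\Delta}(k)$. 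In parallel, any isomorphism of Chow motives induces an isomorphism of $\ell$-adic realizations for every $\ell$, so the $L$-functions agree up to a Tate twist: $L(\mathfrak{h}^3(J)^{\mu_6}, s) = L(\mathfrak{h}^1(E^{\Delta}), s-1)$, and therefore $\ord_{s=1} L(\mathfrak{h}^1(E^{\Delta}), s) = \ord_{s=2} L(\mathfrak{h}^3(J)^{\mu_6}, s)$. Combining both facts, Conjecture \ref{conj: BB1} becomes the assertion $\mathrm{rk}\, E^{\Delta}(k) = \ord_{s=1} L(E^{\Delta}, s)$, i.e.\ BSD for $E^{\Delta}$ over $k$.

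Once in this form, I would invoke the known cases of BSD for $E^{\Delta}$, which has complex multiplication by $\Z[\omega]$ after base change to $k(\omega)$. For $k = \Q$, analytic rank $0$ is given by Coates--Wiles (and Rubin's refinement for general CM elliptic curves), and analytic rank $1$ by the combination of Gross--Zagier and Kolyvagin--Logachev. Over totally real fields one substitutes Shouwu Zhang's generalization of Gross--Zagier. In each of these cases, one obtains Conjecture \ref{conj: BB1} for the corresponding bielliptic Picard curve unconditionally.

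The main obstacle is BSD itself in the remaining cases: for analytic rank $\geq 2$, or over arbitrary number fields in higher rank, BSD is open even for CM curves, and Conjecture \ref{conj: BB1} inherits this open status. The value of the approach is that the motivic reduction is unconditional, so the difficulty of Conjecture \ref{conj: BB1} for a given $C_{a,b}$ is \emph{literally} the difficulty of BSD for the single CM curve $E^{\Delta}_{a,b}$, and any future advance on BSD for $j=0$ elliptic curves immediately yields new cases.
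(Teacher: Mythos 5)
Your reduction is exactly the paper's own treatment: Conjecture \ref{conj: BB1} is presented there as the Beilinson--Bloch conjecture for $\frak{h}^3(J)^{\mu_6}$ rewritten via Theorem \ref{thm: isomorphism of motives over a field} and Proposition \ref{prop: chow groups}, and the paper likewise notes (via Corollary \ref{cor: isomorphism of Chow groups}) that it is equivalent to weak BSD for $E^\Delta$, so neither the paper nor your argument proves it in general. Your further remark that known CM cases of BSD (analytic rank $\leq 1$ over $\Q$, and suitable totally real cases) give unconditional instances is a correct supplement, but the conjecture itself remains open exactly to the extent BSD for $E^\Delta$ does.
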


 The (weak) Birch and Swinnerton-Dyer conjecture states that $\ord_{s = 1}L(\frak{h}^1(E^\Delta), s) = \rk\, E^\Delta(k)$. 
Therefore the Beilinson--Bloch conjecture for $\frak{h}^3(J)^{\mu_6}$ is equivalent to the weak Birch and Swinnerton-Dyer conjecture for $E^\Delta$ (by Corollary \ref{cor: isomorphism of Chow groups}).     

\subsection{The Griffiths group}

As detailed in \cite[1.3]{BlochII}, the Beilinson--Bloch conjecture should also be compatible with the coniveau filtrations on $\CH^i(X)$ and $\HH^{2i-1}(X)(i)$. In our setting, this allows us to predict the rank of the Griffiths group $\Gr^2(J)$, at least in some cases; we briefly sketch the details below.
To formulate it, we use the decomposition $\frak{h}^2(P) = \frak{h}_{\mathrm{alg}}^2(P) + \frak{h}_{\mathrm{tr}}^2(P)$ from \cite{KahnMurrePedrini-transcendentalpartmotivesurface} which has the property that the $\ell$-adic realization of $\frak{h}^2_{{\mathrm{tr}}}(P)$ is \[V:= (\NS(P_{\bar{k}})(-1) \otimes \Q_\ell)^\perp \subset \HH^2(P),\]
the transcendental part of $\HH^2(P)$.

\begin{remark}
 {\em    If $P$ does not have CM then $V$ is three dimensional. If $P$ is moreover of $\GL_2$-type, then $V$ is (up to twist) the symmetric square of the associated 2-dimensional $\Gal_k$-representation. If $P$ has CM by an imaginary quadratic field $K$, then $V$ is $2$-dimensional and the $L$-function $L(V,s)$ is a product of Hecke $L$-functions attached to the field $K$.}
\end{remark}
Using the isogeny decomposition $J \sim P \times E$ and the Künneth formula, one checks that the first graded piece $\mathrm{gr}^0\HH^3(J)(2)$ in the coniveau filtration is a nonzero quotient of $V \otimes \HH^1(E)(2)$. Define 
\[\frak{h}_{\mathrm{Gr}}^3(J) := \frak{h}_{{\mathrm{tr}}}^2(P) \otimes \frak{h}^1(E).\] 
The following prediction then follows from \cite[1.3]{BlochII}.

\begin{conjecture}
Let $C$ be a bielliptic Picard curve over a number field $k$. Then \[\mathrm{rk} \, \Gr^2(J) \leq \ord_{s = 2}L(\frak{h}_{\mathrm{Gr}}^3(J),s),\]
with equality if $V \otimes \HH^1(E)$ is an irreducible $\Gal_k$-representation.   
\end{conjecture}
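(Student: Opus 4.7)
The plan is to deduce the conjecture from the Beilinson--Bloch conjecture in the refined, coniveau-compatible form of Bloch \cite[1.3]{BlochII}. The argument breaks into three steps.

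First, I would make precise the passage from the Chow group to the Griffiths group on the cohomology side. The coniveau filtration on $\HH^3(J)(2)$ has $N^1\HH^3(J)(2)$ equal to the classes supported in codimension $\geq 1$; under the coniveau-compatible Beilinson--Bloch philosophy, the subquotient of $\CH^2(J)_{\mathrm{hom}}\otimes\Q$ generated by algebraically trivial classes corresponds to $N^1\HH^3(J)(2)$, so that $\Gr^2(J)\otimes\Q$ is controlled by $\mathrm{gr}^0_N \HH^3(J)(2)$. Accepting this, the conjectural inequality of ranks reduces to a computation at the level of the first graded piece of the coniveau filtration.

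Second, I would identify $\mathrm{gr}^0_N \HH^3(J)(2)$ motivically. Using the isogeny $J\sim P\times E$ and Kunneth, $\mathfrak{h}^3(J)$ decomposes into summands $\mathfrak{h}^i(P)\otimes \mathfrak{h}^{3-i}(E)$ for $i=0,\dots,3$. The contributions with $i\neq 2$ lie in positive coniveau because $\mathfrak{h}^1$ of an abelian variety has coniveau at least $1$ after appropriate twisting. Within $\mathfrak{h}^2(P)$, Proposition \ref{proposition: moonen decomposition h2} splits off $\mathfrak{h}^2_{\mathrm{alg}}(P)\simeq \NS_P(-1)$, whose realization is visibly of coniveau $1$. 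Therefore the only summand contributing nontrivially to $\mathrm{gr}^0_N$ is $\mathfrak{h}^2_{\mathrm{tr}}(P)\otimes\mathfrak{h}^1(E) = \mathfrak{h}^3_{\mathrm{Gr}}(J)$, whose $\ell$-adic realization is $V\otimes \HH^1(E)$.

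Third, I would apply the Beilinson--Bloch conjecture directly to the motive $\mathfrak{h}^3_{\mathrm{Gr}}(J)$: this predicts
\[\mathrm{rk}\,\bigl(\epsilon\circ \CH^2(J)_{\mathrm{hom}}\bigr) = \ord_{s=2} L\bigl(\mathfrak{h}^3_{\mathrm{Gr}}(J),s\bigr),\]
where $\epsilon$ is the idempotent cutting out $\mathfrak{h}^3_{\mathrm{Gr}}(J)$. Combining with the previous two steps, the natural map from this summand onto $\Gr^2(J)\otimes\Q$ yields the inequality. For the equality under the irreducibility hypothesis: if $V\otimes \HH^1(E)$ is irreducible as a $\Gal_k$-representation, then $\mathfrak{h}^3_{\mathrm{Gr}}(J)$ has no proper motivic sub-summands whose cycle classes could collapse into coniveau $\geq 1$, so the surjection becomes an isomorphism (on ranks), giving equality.

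The main obstacle is of course that this whole argument is conditional: it assumes both the Beilinson--Bloch conjecture itself and its coniveau-compatible refinement, neither of which are known for $\mathfrak{h}^3(J)$ in genus $3$. A secondary difficulty is establishing analytic continuation and functional equation of $L(\mathfrak{h}^3_{\mathrm{Gr}}(J),s)$ so that $\ord_{s=2}$ is even well-defined; this is accessible when $P$ is of $\mathrm{GL}_2$-type (so that $V$ is a symmetric square of a $2$-dimensional Galois representation attached to a modular form and $E$ itself is modular) or has CM (so that $L(V,s)$ factors into Hecke $L$-functions), but is otherwise open. Finally, verifying that the motivic decomposition of Section \ref{sec: motivic computations and proof of main theorem} interacts correctly with the coniveau filtration, rather than just with homological equivalence, requires some care and is where the irreducibility hypothesis does genuine work.
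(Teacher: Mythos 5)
Your proposal follows essentially the same route as the paper, which does not prove this statement but states it as a conjecture motivated exactly as you describe: via Bloch's coniveau-compatible refinement of Beilinson--Bloch \cite[1.3]{BlochII}, the isogeny $J \sim P \times E$ with Kunneth, and the splitting $\frak{h}^2(P) = \frak{h}^2_{\mathrm{alg}}(P) \oplus \frak{h}^2_{\mathrm{tr}}(P)$, identifying $\mathrm{gr}^0$ of the coniveau filtration on $\HH^3(J)(2)$ as a (nonzero) quotient of $V \otimes \HH^1(E)(2)$, with irreducibility forcing this quotient to be everything and hence equality. You also correctly flag that the argument is conditional, which is precisely why the paper records it as a prediction rather than a theorem.
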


Assuming this conjecture, we exhibit  below some new examples of nonhyperelliptic Jacobians over a number fields with finite Griffiths group. First we describe some known examples.

\begin{example}
{\em 
    Following up on \cite{BisognoLiLittSrinivasan}, Gross found two nonhyperelliptic Jacobians with finite Griffiths group (assuming the Beilinson--Bloch conjecture): the genus $7$ Fricke-Macbeath curve and its genus $3$ quotient \cite{GrossCeresa}, both defined over $\Q$.   
    }
\end{example}
The other previously known example is a certain ``special'' bielliptic Picard curve. To put it in context, note that if $P$ is defined over $\Q$ and has CM (over $\Qbar$) by an imaginary quadratic field $\Q(\sqrt{d}) \neq \Q(\omega)$, then $L(\frak{h}_{\mathrm{Gr}}^3(J),s)$ is a Hecke $L$-function for the biquadratic CM field $\Q(\omega, \sqrt{d})$.   

\begin{example}\label{ex: LS special curve}
{\em 
The curve $C_{0,1} \colon y^3 = x^4 + 1$ has CM by $\Q(i)$ \cite[Example 6.15]{LagaShnidman}. In \cite[Theorem 1.3]{LilienfeldtShnidman}, Lilienfeldt and the second author show that the corresponding Hecke $L$-function has nonvanishing central value, and hence $\Gr^2(J_{0,1})$ is conjecturally finite.    
}
\end{example}

Using Theorem \ref{thm: Neron Severi action} and mod $p$ point counts of $E$ and $J$, one can compute the good Euler factors for the $L$-function $L(\frak{h}^3_{\mathrm{Gr}}(J),s)$, even for non-CM bielliptic Picard curves $C_{a,b}$. These $L$-functions are not known to have analytic continuation in general but we can still compute their analytic rank in Magma assuming that they do.

\begin{example}\label{ex: griff}
{\em 
Using \cite{AsifFitePentland, Sutherland2020}, we have verified (conditional on the analytic continuation of the $L$-function) that for both $C_{6,-3}$ and $C_{6,1}$, the central value $L(\frak{h}^3_{\mathrm{Gr}}(J),2)$ is nonzero. Hence $\Gr^2(J_{6,-3})$ and $\Gr^2(J_{6,1})$ are conjecturally finite groups.}    
\end{example}

Theorem \ref{thm: torsion in griffiths intro} suggests that the behavior in Examples \ref{ex: LS special curve} and \ref{ex: griff} is not so special, and we expect that many bielliptic Picard Jacobians over $\Q$ have finite Griffiths group.

\bibliographystyle{abbrv}

\end{document}